\documentclass[a4paper,10pt]{article}
\usepackage{amsmath,amsfonts,amssymb,amsthm,enumerate,fullpage}
\usepackage{hyperref,booktabs}
\usepackage[utf8]{inputenc}

\providecommand{\CC}{\mathbb{C}}

\providecommand{\FF}{\mathbb{F}}
\providecommand{\PP}{\mathbb{P}}
\providecommand{\RR}{\mathbb{R}}

\providecommand{\ZZ}{\mathbb{Z}}

\providecommand{\cE}{\mathcal{E}}

\providecommand{\cL}{\mathcal{L}}
\providecommand{\cP}{\mathcal{P}}
\providecommand{\cR}{\mathcal{R}}
\providecommand{\cS}{\mathcal{S}}
\newcommand{\vJ}{\mathbf{J}}
\newcommand{\vI}{\mathbf{I}}

\newcommand{\witness}[2]{#1 \vDash #2}

\newtheorem{theorem}{Theorem}[section]
\newtheorem{lemma}[theorem]{Lemma}
\newtheorem{corollary}[theorem]{Corollary}
\newtheorem{conjecture}[theorem]{Conjecture}
\theoremstyle{definition}
\newtheorem{definition}[theorem]{Definition}
\newtheorem{example}[theorem]{Example}
\newtheorem{assumption}[theorem]{Assumption}

%opening
\title{Boolean degree 1 functions on some classical association schemes}

\author{Yuval Filmus\footnote{\hbadness=1500 Technion --- Israel Institute of Technology, Haifa, Israel.
This research was funded by ISF grant 1337/16.
},
Ferdinand Ihringer\footnote{\hbadness=1500 Einstein Institute of Mathematics, Hebrew University of Jerusalem, Israel.
Department of Pure Mathematics and Computer Algebra, Ghent University, Belgium. Supported by ERC advanced grant 320924.
The author is supported by a postdoctoral fellowship of the Research Foundation --- Flanders (FWO).}
}

\begin{document}

\maketitle

\begin{abstract}
  We investigate Boolean degree 1 functions for several 
  classical association schemes, including Johnson graphs, Grassmann graphs,
  graphs from polar spaces, and bilinear forms graphs, as well as some other domains such as multislices (Young subgroups of the symmetric group).
  In some settings, Boolean degree 1 functions are also known as \textit{completely regular strength 0 codes of covering radius 1},
  \textit{Cameron--Liebler line classes}, and \textit{tight sets}.
  
  We classify all Boolean degree $1$ functions on the multislice.
  On the Grassmann scheme $J_q(n, k)$ we show that all Boolean degree $1$ functions
  are trivial for $n \geq 5$, $k, n-k \geq 2$ and $q \in \{ 2, 3, 4, 5 \}$, and that, for general $q$,
  the problem can be reduced to classifying all Boolean degree $1$ functions on $J_q(n, 2)$.
  We also consider polar spaces and the bilinear forms graphs, giving evidence 
  that all Boolean degree $1$ functions are trivial for appropriate choices of the parameters.  

  MSC2010: 05B25, 05E30, 06E30.
\end{abstract}

\section{Introduction}

Analysis of Boolean functions is a classical area of research in combinatorics and computer science, dealing with $0,1$-valued functions on finite domains. Most research in the area has focused on functions on the hypercube $H(n,2)$, a product domain which can also be realized as the tensor power $\FF_2^n$. Boolean functions on the hypercube appear naturally in various guises in theoretical computer science, combinatorics, and random graph theory, and as a consequence have been thoroughly investigated. The recent monograph of O'Donnell~\cite{RyanODonnell} provides a good exposition of the area and its applications.

In the last years researchers have been working on extending the theory to other domains, chiefly the \emph{Johnson graph} $J(n,k)$, also known as a \emph{slice} of the hypercube, which consists of all $k$-subsets of $[n] := \{1, \ldots, n\}$; consult~\cite{filmus5,filmus4,fkmw,fm,KellerKlein,doi:10.1137/100787325} for some of the work in this area.
Recently, the \emph{Grassmann graph} $J_q(n,k)$, which is the $q$-analog of the Johnson graph, has come to attention in theoretical computer science~\cite{TR16-198,TR17-094,Khot:2017:ISG:3055399.3055432,TR18-006}, but its research from the point of view of analysis of Boolean functions is at its infancy.

\smallskip

Let $x_i$ be the Boolean function on the hypercube with $x_i(S) = 1$ if $S_i = 1$ and $x_i(S) = 0$
otherwise. We say that a Boolean function has degree $d$ if we can write $f$ as a multivariate
polynomial in $x_1, \ldots, x_n$ of degree $d$. The following is folklore (see \cite[Exercise 1.19]{RyanODonnell}):
\begin{theorem}[Folklore]\label{thm:hamming_class}
  Every Boolean degree~1 function on $H(n, 2)$ is either constant or depends on a single coordinate.
\end{theorem}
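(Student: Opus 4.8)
The plan is to work directly with the Fourier (multilinear) expansion of $f$. Write $f = \sum_{S \subseteq [n]} \hat f(S)\, \chi_S$ in the $\pm 1$ orthonormal basis, where $\chi_S = \prod_{i \in S} (1 - 2x_i)$; the degree~1 hypothesis says $\hat f(S) = 0$ whenever $|S| \geq 2$, so $f(x) = a_0 + \sum_{i=1}^n a_i (1 - 2x_i)$ for real constants $a_0, a_1, \ldots, a_n$. The whole content of the theorem is then an elementary statement about such affine functions: an affine function on $\{0,1\}^n$ that only takes the values $0$ and $1$ must have at most one nonzero slope.

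First I would prove that at most one of the coefficients $a_1, \ldots, a_n$ is nonzero. Suppose toward a contradiction that $a_i \neq 0$ and $a_j \neq 0$ for some $i \neq j$. Fix an arbitrary assignment to all coordinates other than $i, j$; this leaves a function of the form $c + a_i(1-2x_i) + a_j(1-2x_j)$ on the four points $(x_i, x_j) \in \{0,1\}^2$. The four values are $c + a_i + a_j$, $c + a_i - a_j$, $c - a_i + a_j$, $c - a_i - a_j$; averaging the ``diagonal'' pairs gives $c$ from both $\{00,11\}$ and $\{01,10\}$, so if all four lie in $\{0,1\}$ we need $\{f(00), f(11)\}$ and $\{f(01), f(10)\}$ to each be either $\{0,1\}$ or a constant pair. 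A short case check (using $a_i, a_j \neq 0$, so the four values are genuinely spread out) forces $\{a_i, a_j\} = \{t, -t\}$ up to sign patterns — more precisely it forces $2|a_i| = 2|a_j| = 1$ and a specific sign relationship — and in particular $|a_i| = |a_j| = 1/2$. Repeating this for every pair shows every nonzero $a_i$ has $|a_i| = 1/2$; but then taking three indices $i, j, k$ with nonzero coefficients and an assignment making the $\pm$ signs add constructively yields a value $c \pm 3/2$ type spread that cannot fit in $\{0,1\}$, a contradiction. Hence at most one $a_i$ is nonzero.

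If all $a_i = 0$ then $f \equiv a_0 \in \{0,1\}$ is constant. Otherwise exactly one coefficient, say $a_1$, is nonzero, and $f(x) = a_0 + a_1(1 - 2x_1)$ depends only on $x_1$; since it takes two distinct values in $\{0,1\}$ as $x_1$ varies, those values are $0$ and $1$, so $f(x) = x_1$ or $f(x) = 1 - x_1$. Either way $f$ depends on a single coordinate, completing the proof.

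The only mildly delicate point is the finite case analysis on the $2\times 2$ (and then $2^3$) subcube showing that two nonzero affine coefficients are incompatible with Booleanness; everything else is bookkeeping with the Fourier expansion. An alternative packaging of the same argument avoids coefficients entirely: a Boolean degree~1 function restricted to any axis-aligned line is still degree~1 and Boolean, hence constant or a dictator, and a short argument patching together the behavior on parallel lines forces global consistency; I would present whichever version is shorter, but the coefficient computation above is the most self-contained.
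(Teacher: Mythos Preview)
Your strategy is correct, but the case analysis is misdescribed: the $2\times 2$ restriction already yields a direct contradiction, and the three-index step is unnecessary. With $a_i, a_j \neq 0$, the four values $c \pm a_i \pm a_j$ are the same set as $c \pm |a_i| \pm |a_j|$; their max and min differ by $2(|a_i|+|a_j|) > 0$, so if all four lie in $\{0,1\}$ then the max is $1$ and the min is $0$, giving $|a_i|+|a_j| = 1/2$ and $c = 1/2$. The remaining two values $1/2 \pm (|a_i|-|a_j|)$ then force $|a_i|-|a_j| = \pm 1/2$, so one of $|a_i|, |a_j|$ vanishes --- contradicting the hypothesis. Thus your claim that the case check ``forces $|a_i|=|a_j|=1/2$'' is incorrect; it directly forces one coefficient to be zero, and no third index is needed.

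The paper itself does not give a separate proof of Theorem~\ref{thm:hamming_class}, but proves the more general $H(n,m)$ version (Theorem~\ref{thm:abelian}) with a one-line argument that sidesteps all computation: writing $f = \sum_i \phi_i(x_i)$, if $\phi_i$ and $\phi_j$ are both non-constant, choose $y_i,z_i$ with $\phi_i(y_i) < \phi_i(z_i)$ and $y_j,z_j$ with $\phi_j(y_j) < \phi_j(z_j)$; then the strict chain $g(y_i,y_j) < g(z_i,y_j) < g(z_i,z_j)$ exhibits three distinct values of a Boolean function, which is impossible. This is exactly the ``alternative packaging'' you allude to at the end, and it is both shorter and more general than the Fourier-coefficient route.
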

Meyerowitz~\cite[Theorem 7]{Brouwer2003} extended this to $H(n,m)$, a result which we reproduce in Section~\ref{ch:groups}.
The aim of this paper is to produce such a classification of Boolean degree $1$ function
for various other structures, mostly classical association schemes.

\smallskip

Similar problems have been investigated by various researchers under different names.
In the context of distance-regular graphs (see \cite{Brouwer1989}) and $Q$-polynomial association schemes, a special class of association schemes, 
Boolean degree $1$ functions are known as \textit{completely regular strength $0$ codes of covering radius $1$} (also strength $0$ designs) \cite{Martin1994}.
Motivated by problems on permutation groups and finite geometry, Boolean degree $1$
functions are also known as \textit{tight sets} \cite{Bamberg2007,DeBruyn2010} and \emph{Cameron--Liebler line classes} \cite{Drudge1998}.
The history of Cameron--Liebler line classes is particularly complicated, as the problem was introduced
by Cameron and Liebler \cite{Cameron1982}, the term coined by Penttila \cite{Pentilla1985,Penttila1991},
and the algebraic point of view as Boolean degree $1$ functions only emerged later; 
see \cite{Vanhove2011}, in particular \S3.3.1, for a discussion of this.

Due to this variation of terminology, classification results of Boolean degree $1$ functions on $J(n, k)$ were obtained repeatedly 
in the literature (with small variations due to different definitions) at least three times, 
in \cite{Meyerowitz1992} for completely regular strength $0$ codes of covering radius $1$, in \cite{filmus5} for Boolean degree $1$ functions,
and in \cite{DeBoeck2016} for Cameron--Liebler line classes:
\begin{theorem}[Folklore]\label{thm:johnson-deg1_intro}
  Suppose that $k,n-k \geq 2$. Every Boolean degree~1 function on $J(n,k)$ is either constant or depends on a single coordinate.	
\end{theorem}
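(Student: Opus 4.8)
\emph{Proof sketch.} The plan is to combine the explicit polynomial form of a degree-$1$ function with the single-swap structure of $J(n,k)$, reducing the classification to an elementary computation controlled by one subset $P \subseteq [n]$.

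First I would fix a degree-$1$ representation $f(S) = c_0 + \sum_{i \in S} c_i$ with real coefficients $c_0, \ldots, c_n$; such a representation exists because $f$ is a polynomial of degree at most $1$ in $x_1, \ldots, x_n$ (it is not unique on the slice, since $\sum_i x_i \equiv k$ there, but any choice will do). Pick any $(k-1)$-subset $A \subseteq [n]$ and any two distinct points $i, j \notin A$; these exist since $k \geq 1$ and $n - k \geq 1$. Then $A \cup \{i\}$ and $A \cup \{j\}$ lie in $J(n,k)$, and
\[
  f(A \cup \{i\}) - f(A \cup \{j\}) = c_i - c_j \in \{-1, 0, 1\},
\]
using that $f$ is $\{0,1\}$-valued. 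So $c_i - c_j$ is an integer of absolute value at most $1$ for every $i \neq j$; taking $\alpha := \min_{1 \leq i \leq n} c_i$ gives $c_i \in \{\alpha, \alpha+1\}$ for all $i$.

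Next I would set $P := \{ i : c_i = \alpha + 1 \}$ and $\beta := c_0 + k\alpha$, so that $f(S) = \beta + |S \cap P|$ for every $k$-set $S$. Since the number of $k$-sets $S$ with $|S \cap P| = t$ is $\binom{|P|}{t}\binom{n-|P|}{k-t}$, the value $|S \cap P|$ runs over all integers in the interval $[\,\max(0,\, k - n + |P|),\ \min(k,\, |P|)\,]$ as $S$ varies, an interval whose endpoints differ by $\min(|P|,\, n-|P|,\, k,\, n-k)$; hence $f$ can be Boolean only if this number is at most $1$. As $k, n-k \geq 2$, this forces $\min(|P|, n-|P|) \leq 1$, i.e. $|P| \in \{0, 1, n-1, n\}$. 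If $|P| \in \{0, n\}$ then $f$ is constant. If $|P| = 1$, say $P = \{i_0\}$, then $f(S) = \beta + x_{i_0}(S)$, which is Boolean only for $\beta = 0$, giving $f = x_{i_0}$. If $|P| = n - 1$, say $[n] \setminus P = \{i_0\}$, then $f(S) = \beta + k - x_{i_0}(S)$, which is Boolean only for $\beta + k = 1$, giving $f = 1 - x_{i_0}$; here both values of $x_{i_0}$ occur because $1 \leq k \leq n - 1$. In all cases $f$ is constant, a single coordinate, or its negation, as claimed.

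The argument is elementary throughout, so the only obstacle is bookkeeping: one must verify the identity $\min(|P|, n-|P|, k, n-k)$ for the spread of $|S \cap P|$, pin down $\beta$ in the two nontrivial cases, and check that $k \geq 2$ and $n - k \geq 2$ are exactly the hypotheses that kill the remaining configurations. That this hypothesis cannot be dropped is visible already at $k = 1$ (or, by complementation, at $n - k = 1$), where $\sum_{i \in P} x_i$ is a Boolean degree-$1$ function depending on all $|P|$ coordinates.
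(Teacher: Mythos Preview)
Your argument is correct. The swap computation $f(A\cup\{i\})-f(A\cup\{j\})=c_i-c_j\in\{-1,0,1\}$ forces all $c_i$ into a two-point set $\{\alpha,\alpha+1\}$, and the hypergeometric range calculation for $|S\cap P|$ then pins $|P|$ down to $\{0,1,n-1,n\}$ exactly under the hypothesis $k,n-k\ge 2$. The remaining bookkeeping is routine.

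This is a genuinely different route from the paper. The paper argues by induction on $k+\ell$ (writing $n=k+\ell$): it verifies the base case $J(4,2)$ by exhaustion, and for the inductive step restricts $f$ to the subdomains $f_a=\{S\ni a\}$, each isomorphic to $J(k+\ell,k)$, then runs a Sudoku-style consistency analysis on pairs $(f_a,f_b)$ to show all restrictions are compatible with a single global form. Your approach is shorter and more self-contained; it exploits the explicit affine shape $f(S)=\beta+|S\cap P|$, which is available on the slice but has no direct analog on the Grassmann scheme. The paper's longer inductive argument is written deliberately as a template: the same restriction-and-compatibility machinery is reused verbatim (with many more cases) to handle $J_q(n,k)$, where no polynomial-coefficient shortcut like yours is known.
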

Depending on the definition used, this is either easy to observe or requires a more elaborate proof.

\smallskip

For the hypercube $H(n, 2)$, which is a product domain, and the Johnson graph $J(n, k)$ classifying Boolean degree $1$ functions is trivial,
but there are various other classical association schemes for which  classification is more difficult.
The Grassmann scheme $J_q(n, k)$ consists of all $k$-spaces of $\FF_q^n$ as vertices, two vertices being adjacent
if their meet is a subspace of dimension $k-1$. Boolean degree $1$ functions on $J_q(4, 2)$
were intensively investigated, and many non-trivial examples~\cite{Bruen1999,Cossidente2017,DeBeule2016,Feng2015,Gavrilyuk2018,Govaerts2005} 
and existence conditions~\cite{Gavrilyuk2014,Metsch2014} are known.
 
We call $1$-dimensional subspaces of $\FF_q^n$ \textit{points}, $2$-dimensional subspaces of $\FF_q^n$ \textit{lines},
and $(n-1)$-dimensional subspaces of $\FF_q^n$ \textit{hyperplanes}. For a point $p$ we define $p^+(S) = 1_{p \in S}$ and $p^-(S) = 1_{p \notin S}$,
and for a hyperplane $\pi$ we define $\pi^+(S) = 1_{S \subseteq \pi}$ and $\pi^-(S) = 1_{S \nsubseteq \pi}$.
The following was shown by Drudge for $q=3$~\cite[Theorem 6.4]{Drudge1998}; by Gavrilyuk and Mogilnykh for $q=4$~\cite[Theorem 3]{Gavrilyuk2014a};
and by Gavrilyuk and Matkin \cite{Gavrilyuk2018a,Matkin2018}
for $q=5$;
the result for $q=2$ follows easily from~\cite[Theorem 6.2]{Drudge1998}:
\begin{theorem}[Drudge, Gavrilyuk and Mogilnykh, Gavrilyuk and Matkin]\label{thm:k_eq_2_results}
  Let $q \in \{ 2, 3, 4, 5 \}$ and either (a) $n \geq 5$ or (b) $n=4$ and $q=2$. 
  Let $f$ be a Boolean degree $1$ function $f$ on $J_q(n, 2)$.
  Then $f$ or its complement $1-f$ is one of the following: $1, p^+, \pi^+, p^+ \lor \pi^+$.
  
  Here $p$ is a point and $\pi$ is a hyperplane satisfying the condition $p \notin \pi$; and $p^+ \lor \pi^+(S) = 1_{p \in S \text{ or } S \subseteq \pi}$.
\end{theorem}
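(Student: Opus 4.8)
The plan is to work inside the two bottom eigenspaces $V_0\oplus V_1$ of the Grassmann scheme, reduce the problem to the projective $3$-space $J_q(4,2)\cong\mathrm{PG}(3,q)$, and then feed in the (partly computer-assisted) classification of Cameron--Liebler line classes of $\mathrm{PG}(3,q)$ for $q\le 5$. To set up, identify $f$ with the characteristic vector of the set $\cL$ of lines on which $f=1$. For $n\ge 4$ the space $V_0\oplus V_1$ of Boolean degree $1$ functions has dimension equal to the number of points of $\FF_q^n$, and it is spanned both by the indicators $p^+$ of point-pencils and by the indicators $\pi^+$ of hyperplane-stars; hence there are unique real coefficients with $f=\sum_p c_p\,p^+=\sum_\pi d_\pi\,\pi^+$. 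Evaluating the first expansion on the $q+1$ points of a line gives $f(\ell)=\sum_{p\in\ell}c_p\in\{0,1\}$, and summing over all lines shows that $\sum_p c_p$ is determined by $|\cL|$ alone; after a suitable normalisation call it the \emph{parameter} $x$ of $f$. Each of $1,p^+,\pi^+,p^+\lor\pi^+$ is Boolean and of degree $1$ --- for the last one one uses $p\notin\pi$, so that $p^+\lor\pi^+=p^++\pi^+$ --- and these four have small parameter; since $f\mapsto 1-f$ reflects $x$ about the midpoint of its range, it suffices to classify the Boolean degree $1$ functions of small parameter.

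The next step is the structural input that drives the induction. From $f=\sum_p c_p\,p^+$ one sees that, for a point $p$, the number $n_p=|\{\ell\in\cL:p\in\ell\}|$ is an affine function of $c_p$ alone (dually for the $d_\pi$); together with $0\le f\le 1$ and eigenvalue/interlacing estimates on the Grassmann graph this forces $p\mapsto c_p$ to be constant off a small, structured set of points (and $\pi\mapsto d_\pi$ off a small, structured set of hyperplanes). The second ingredient is a restriction principle: for every $4$-dimensional subspace $W$ of $\FF_q^n$, the function $\ell\mapsto f(\ell)$ on the lines of $W$ is again Boolean and of degree $1$ on $J_q(4,2)\cong\mathrm{PG}(3,q)$, with point-coefficients $(c_p)_{p\in W}$; there is a companion ``derivation at a point'' producing a degree $1$ function on $J_q(n-1,1)$. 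Finally I would extract the divisibility and modular conditions on the parameter obtained from these restrictions when $n\ge 5$, in the spirit of the results of Metsch and of Gavrilyuk--Metsch.

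The main argument then combines two things: the modular conditions available for $n\ge 5$, and the classification of Cameron--Liebler line classes of $\mathrm{PG}(3,q)$ for $q\in\{2,3,4,5\}$ --- Drudge's counting argument for $q\in\{2,3\}$ and the computer-assisted analyses of Gavrilyuk--Mogilnykh ($q=4$) and of Gavrilyuk--Matkin ($q=5$) --- applied to every $4$-space restriction of $f$. Knowing the finite list of parameters that actually occur in $\mathrm{PG}(3,q)$, and using that the $3$-spaces inside the ambient $\mathrm{PG}(n-1,q)$ overlap and so have mutually consistent restrictions, should force the parameter of $f$ into the narrow set realised by the four trivial families; a gluing argument then upgrades ``every $4$-space restriction is trivial'' to ``$f$ is trivial'', by checking that the common centre of the pencils seen through a fixed line is a single point $p$ (or empty), the common axis of the co-pencils a single hyperplane $\pi$, and that $p\notin\pi$ unless $f$ collapses to a smaller case. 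The single remaining case, $n=4$ with $q=2$, is small enough to be handled by Drudge's direct argument in $\mathrm{PG}(3,2)$.

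The difficulty is concentrated entirely in the base classification, that of Cameron--Liebler line classes in $\mathrm{PG}(3,q)$, which for $q=4$ and especially $q=5$ is not reachable by pure algebra: the eigenvalue bounds, the divisibility of the $n_p$, and the modular equalities pin the parameter down to the trivial values together with a shrinking but nonempty window of ``medium'' values, and closing that window required exhaustive computer search. By comparison the module bookkeeping, the affine behaviour of the $n_p$, the restriction and derivation principles, and the gluing for $n\ge 5$ are all routine; the real work --- and the reason the statement is restricted to $q\le 5$, and at $n=4$ to $q=2$ --- is in making that search feasible.
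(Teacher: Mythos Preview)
The paper contains no proof of this theorem. It is quoted in the introduction as a known result --- Drudge's thesis for $q\in\{2,3\}$, Gavrilyuk--Mogilnykh for $q=4$, Gavrilyuk--Matkin for $q=5$ --- and then used, unchanged and as a black box, as the base case for the paper's own contribution, the inductive step from $J_q(n-1,k-1)$ to $J_q(n,k)$ (Theorem~\ref{thm:grassmann_inductive_step}). There is therefore nothing inside the paper to compare your proposal against.

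On the proposal itself: the overall architecture is reasonable, and your last paragraph correctly locates the difficulty in the $\mathrm{PG}(3,q)$ classification. But the reduction step, as written, has a genuine gap. For $q\ge 3$ the space $J_q(4,2)\cong\mathrm{PG}(3,q)$ \emph{does} carry non-trivial Cameron--Liebler line classes --- the paper itself exhibits the Bruen--Drudge family in Example~\ref{ex:bruendrudge} --- so the restriction of $f$ to a $4$-dimensional subspace need not be one of the four trivial types. Your clause ``mutually consistent restrictions \ldots\ should force the parameter of $f$ into the narrow set realised by the four trivial families'' is thus not a routine gluing step; it is the entire content of the theorem for $n\ge 5$. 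In the cited sources this is handled not by showing each restriction is trivial, but by intersecting the short list of parameters that occur in $\mathrm{PG}(3,q)$ (non-trivial ones included) with arithmetic/modular constraints that only appear once $n\ge 5$, which kills the middle range of $x$. A smaller slip: your ``derivation at a point producing a degree~$1$ function on $J_q(n-1,1)$'' lands on the points of the quotient, where every function is trivially degree~$1$ and nothing is learned; the inductive object you want is a line class in the quotient, i.e.\ something on $J_q(n-1,2)$.
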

For $J_q(n, k)$, some restrictions on the parameters of Boolean degree $1$ functions are known, see \cite{Metsch2017,Rodgers2018}.
Our main result for $J_q(n, k)$ is the following, which extends Theorem~\ref{thm:k_eq_2_results}:
\begin{theorem}\label{thm:grassmann_class_intro}
  Let $q \in \{ 2, 3, 4, 5 \}$, $k, n-k \geq 2$, and either (a) $n \geq 5$ or (b) $n=4$ and $q=2$. 
  Let $f$ be a Boolean degree $1$ function $f$ on $J_q(n, k)$.
  Then $f$ or its complement $1-f$ is one of the following: $1, p^+, \pi^+, p^+ \lor \pi^+$.
  
  Here $p$ is a point, $\pi$ is a hyperplane, and $p \notin \pi$.
\end{theorem}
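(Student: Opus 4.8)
The plan is to reduce the classification on $J_q(n,k)$ to the case $k=2$ already settled in Theorem~\ref{thm:k_eq_2_results}, by restricting $f$ to sub-Grassmannians and then gluing the resulting local descriptions. For the reductions: the map $S \mapsto S^{\perp}$ is an isomorphism of association schemes $J_q(n,k) \to J_q(n,n-k)$, so it preserves the degree of a function, and it exchanges point-indicators with hyperplane-indicators; since the family $\{1, p^{+}, \pi^{+}, p^{+}\lor\pi^{+}\}$ together with complements is invariant under swapping the roles of points and hyperplanes, we may assume $k \le n-k$. If $k = 2$ the statement is precisely Theorem~\ref{thm:k_eq_2_results} (case (a) gives $n\ge5$; case (b) gives $n=4$, $q=2$, which forces $k=2$). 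So from now on $k \ge 3$, whence $n \ge 2k \ge 6$ — in particular we are in case (a) — and $n-k \ge k \ge 3$.

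\emph{Localization.} For a subspace $A$ with $\dim A = k-2$, the interval $[A,\FF_q^n] := \{\, S : A \subseteq S,\ \dim S = k \,\}$, with the relations inherited from $J_q(n,k)$, is isomorphic to the Grassmann scheme $J_q(n-k+2,2)$ via $S \mapsto S/A$. Restriction sends Boolean degree-$1$ functions to Boolean degree-$1$ functions: the degree-$\le 1$ functions on $J_q(n,k)$ are spanned by the constant $1$ and the point-indicators $S \mapsto 1_{p \subseteq S}$, and each such generator restricts to $[A,\FF_q^n]$ either to the constant $1$ (when $p \subseteq A$) or to the point-indicator of $(A+p)/A$ on the sub-Grassmannian, so restrictions remain of degree $\le 1$. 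Since $n-k+2 \ge 5$, Theorem~\ref{thm:k_eq_2_results} applies on every such interval: writing $g_A := f|_{[A,\FF_q^n]}$, either $g_A$ or $1-g_A$ equals one of
\[
  1, \qquad 1_{P \subseteq S}, \qquad 1_{S \subseteq \Pi}, \qquad 1_{P \subseteq S \text{ or } S \subseteq \Pi},
\]
for some $(k-1)$-space $P \supseteq A$ and some hyperplane $\Pi \supseteq A$ (with $P \not\subseteq \Pi$ in the last case).

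\emph{Globalization.} If $f$ is constant we are done, so assume some $g_A$ is non-constant. It remains to show that the point/hyperplane data carried by the various $g_A$ is globally coherent, assembling into a single point $p$, a single hyperplane $\pi$, or a pair $(p,\pi)$ with $p\not\subseteq\pi$, and that $f$ is then the corresponding function $p^{+}$, $\pi^{+}$, or $p^{+}\lor\pi^{+}$ up to complement. The engine is overlap compatibility plus connectivity: for $(k-2)$-spaces $A, A'$ with $\dim(A+A') \le k$ one has $[A,\FF_q^n] \cap [A',\FF_q^n] = [A+A',\FF_q^n] \ne \emptyset$, a sub-line on which the two local descriptions must agree, and the graph on $(k-2)$-spaces with $A \sim A'$ when $\dim(A+A') = k-1$ (a Grassmann graph, hence connected) lets a description valid near one $A_0$ be propagated to all of $J_q(n,k)$. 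Concretely, if $g_{A_0}$ is of ``point type'' $1_{P_0 \subseteq S}$ one fixes a point $p \subseteq P_0\setminus A_0$, checks on the overlaps that every $g_A$ is compatible with $1_{p \subseteq S}$ (possibly augmented by a hyperplane term), and concludes $f \in \{p^{+}, p^{+}\lor\pi^{+}\}$; the ``hyperplane type'' case is dual; and if every non-constant $g_A$ is a genuine disjunction, the $(k-1)$-spaces $P_A$ turn out to share a common point $p$ and the hyperplanes $\Pi_A$ a common hyperplane $\pi$ with $p\not\subseteq\pi$, giving $f = p^{+}\lor\pi^{+}$. A final routine check confirms each candidate is Boolean of degree $1$ and occurs in the list.

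\emph{Main obstacle.} The crux is the globalization step; connectivity is easy, but the case bookkeeping is delicate. A restriction on a small overlap $[A+A',\FF_q^n]$ only partially determines the neighbouring local data, so the common point (or hyperplane) must be extracted by combining many comparisons rather than a single one. Moreover the $g_A$ that happen to be \emph{constant} carry no point/hyperplane yet still constrain the admissible global configurations (for instance $g_A \equiv 1$ is consistent with $p^{+}$ exactly when $p \subseteq A$), and a ``point-type'' and a ``hyperplane-type'' restriction may coexist only through the disjunction $p^{+}\lor\pi^{+}$. Ruling out all such clashes, uniformly over $q \in \{2,3,4,5\}$, is where the real work lies.
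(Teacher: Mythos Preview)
Your localization idea is sound, and the overall strategy --- restrict to sub-Grassmannians where Theorem~\ref{thm:k_eq_2_results} applies, then glue --- is exactly the shape of the paper's argument. But the two proofs differ in \emph{where} they localize, and this matters.

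The paper quotients by a single point $a$, so that $f_a$ lives on $J_q(n-1,k-1)$, and proceeds by induction on $k$ (Theorem~\ref{thm:grassmann_inductive_step}): assuming the classification on $J_q(n-1,k-1)$, each $f_a$ is known to be trivial, and a sequence of explicit lemmas (Lemmas~\ref{lem:char_by_quotient_case_pt}--\ref{lem:hyp_compl}) pins down the \emph{global} point $\tilde p$ and hyperplane $\pi$ by comparing $f_a$ and $f_b$ on $k$-spaces through the line $\langle a,b\rangle$. Those lemmas are the entire content of the proof; each one is a careful case analysis showing that a given local pattern forces a unique global trivial form.

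Your plan instead quotients by a $(k-2)$-space $A$, jumping straight to $J_q(n-k+2,2)$ and avoiding the intermediate induction. That is a legitimate alternative, and the duality reduction to $k\le n-k$ is a nice simplification the paper does not make. However, your ``Globalization'' paragraph is only a sketch, and your ``Main obstacle'' paragraph is, frankly, a confession that the proof is not finished: you identify exactly the difficulty (constant restrictions, mixed point/hyperplane types, extracting a single $p$ from the family of $(k-1)$-spaces $P_A$) without resolving it. Concretely, when $\dim(A+A')=k-1$ the overlap $[A+A',\FF_q^n]$ is a pencil, and on a pencil each trivial type can degenerate (e.g.\ $P_A^+$ restricts to the constant $1$ whenever $P_A=A+A'$, and $\Pi_A^+$ restricts to $0$ whenever $A+A'\not\subseteq\Pi_A$). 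So a single comparison does not determine the neighbouring data, and you must organize a chain of comparisons that simultaneously (i) selects the correct point $p\in P_{A_0}\setminus A_0$ out of the $q^{k-2}$ candidates, (ii) shows this choice is consistent across all $A$, and (iii) handles the constant $g_A$'s and the mixed $p^+\lor\pi^+$ case. This is precisely the work done by the paper's Lemmas~\ref{lem:char_by_quotient_case_pt}--\ref{lem:hyp_compl}; your proposal needs an equivalent suite of lemmas, and until those are written down the argument has a genuine gap.

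In short: the route is different and viable, the reduction step is correct, but the gluing step --- which is the heart of the matter --- is asserted rather than proved.
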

This improves Corollary 5.5 in \cite{Rodgers2018}.
In particular, we reduce the problem to the $J_q(n, 2)$ case.
As soon as a version of Theorem \ref{thm:k_eq_2_results} (with the same classification) is shown for some prime power $q>5$ and $n \geq 5$,
Theorem \ref{thm:grassmann_class_intro} will generalize to this value of~$q$.

\smallskip

From an algebraic point of view, the Johnson graphs correspond to spherical buildings of type $A_n$~\cite[Chapter 6]{Tits1974}.
There are two more non-exceptional families of spherical buildings, $C_n$ and $D_n$, which in the finite
case contain the polar spaces $O^+(2n, q)$, $O(2n+1, q)$, $O^-(2n+2, q)$, $Sp(2n, q)$, $U(2n, q)$, and $U(2n+1, q)$.
For our purposes we refer to all of these as \emph{polar spaces}. Polar spaces arise naturally in the study
of finite classical groups, and are also relevant in other contexts, for example for quantum matroids~\cite{Terwilliger}. 
Boolean degree $1$ functions on points of polar spaces are known
as \emph{tight sets}~\cite{Bamberg2007}.
Boolean degree $1$ functions on \textit{maximals} (maximal subspaces) of polar spaces, which are also known as \textit{dual polar graphs}, were recently investigated
in~\cite{DeBoeck2017}, but the reader is warned that the definition of a Cameron--Liebler line class
in~\cite{DeBoeck2017} only corresponds to a Boolean degree $1$ function for
$O^-(2n+2, q)$, $O(2m+1, q)$, $O^+(2m, q)$, $Sp(2m, q)$, $U(2n, q)$, and $U(2n+1, q)$ (here $m=2n$).

There exist highly complicated Boolean degree $1$ functions on polar spaces, and a classification
result seems to be very hard to obtain. Our main result is as follows (see Section~\ref{ch:polar} for a more refined statement):
\begin{theorem}\label{thm:polar_class_intro}
  Let $k, n-k \geq 2$.
  Let $f$ be a Boolean degree $1$ function on the $k$-spaces of $O^+(2n, 2)$.
  Then $f$ can be written as a disjoint union of Boolean degree $1$ functions
  induced by $J_2(2n, 2)$.
\end{theorem}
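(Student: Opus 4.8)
The plan is to reduce the problem to the Grassmann case by restricting to totally singular subspaces, and then to glue the resulting local data together. Recall that the $k$-spaces of $O^+(2n,2)$ are the $k$-dimensional totally singular subspaces of $\FF_2^{2n}$ and that $k, n-k \ge 2$ forces $n \ge 4$. A Boolean degree $1$ function $f$ on these $k$-spaces is a linear combination of the constant function and the incidence functions $p^+$, where $p$ ranges over the points of $O^+(2n,2)$; this is the standard description of the degree $\le 1$ functions. Consequently, for any totally singular subspace $U$ with $\dim U > k$, restricting $f$ to the $k$-subspaces of $U$ --- which form a copy of $J_2(\dim U, k)$ --- yields a Boolean degree $1$ function there, since $p^+$ restricts to the incidence function at $p$ when $p \le U$ and to $0$ otherwise. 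Taking $U = M$ to be a generator (a maximal totally singular subspace, so $\dim M = n \ge k+2 \ge 4$), Theorem~\ref{thm:grassmann_class_intro} applies (recall $q=2$, $k, n-k \ge 2$, and $n \ge 4$); hence, up to replacing $f$ by $1-f$, the restriction $f|_M$ is one of the trivial functions $1$, $p_M^+$, $\pi_M^+$, $p_M^+ \lor \pi_M^+$, for a point $p_M$ of $M$ and a hyperplane $\pi_M$ of $M$ with $p_M \notin \pi_M$ in the last case (so that $p_M^+ \lor \pi_M^+ = p_M^+ + \pi_M^+$, since no $k$-space through $p_M$ lies in $\pi_M$). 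Thus the restriction of $f$ to every generator is trivial.

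The second step is coherence across generators. If two generators $M$ and $M'$ meet in a subspace $W$, then $f|_M$ and $f|_{M'}$ restrict to one and the same function on $J_2(\dim W, k)$. When $\dim W$ is large enough, comparing the trivial forms of $f|_M$ and $f|_{M'}$ --- together with the explicit way the functions $1$, $p^+$, $\pi^+$, $p^+ \lor \pi^+$ restrict to subspaces --- forces their data to agree on $W$: the special point of $M$ lies in $W$ if and only if that of $M'$ does, and then they coincide; likewise the two hyperplanes cut out the same subspace of $W$. Since the incidence structure of generators of $O^+(2n,2)$ is connected --- any two generators are joined by a chain whose consecutive members meet in dimension $n-1$ --- propagating this agreement along chains (and similarly within the residue of a fixed point, which is itself a polar space) produces globally coherent data: a set $P^\ast$ of points of $O^+(2n,2)$ forming a partial ovoid (two collinear points span a totally singular line, hence lie in a common generator, which carries at most one special point), together with at most one hyperplane $\pi$ of $\FF_2^{2n}$.

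From this global data the classification is read off. Every $k$-space through a point of $P^\ast$ lies in the support of $f$: extending it to a generator $M$, the special point of $M$ is that point, so $f|_M$ takes the value $1$ there. Because $P^\ast$ is a partial ovoid, $\sum_{p \in P^\ast} p^+$ is itself a Boolean degree $1$ function, and $f' := f - \sum_{p \in P^\ast} p^+$ is again a Boolean degree $1$ function whose restriction to every generator is $0$, $1$, or a single hyperplane function. A Witt-index estimate shows that every codimension-$2$ subspace of $\FF_2^{2n}$ contains a totally singular $k$-space when $k \le n-2$, so two distinct hyperplane functions $\pi_1^+, \pi_2^+$ are never disjoint; this forces $f'$ to be $0$, $1$, or a single $\pi^+$. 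Therefore $f$ is, up to complementation, equal to $\bigsqcup_{p \in P^\ast} p^+$, or to $\bigl(\bigsqcup_{p \in P^\ast} p^+\bigr) \sqcup \pi^+$ with $P^\ast$ disjoint from $\pi$, or to a constant; each of these is a disjoint union of the Boolean degree $1$ functions $p^+$, $\pi^+$, $1$, i.e., of the Boolean degree $1$ functions induced by $J_2(2n,2)$.

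The main obstacle is the gluing in the second and third steps. Matching the trivial forms on an overlap $W$ is delicate when $k = n-2$, for then adjacent generators meet in a subspace of dimension only $k+1$, where Theorem~\ref{thm:grassmann_class_intro} does not apply; instead one must check by hand that the restrictions of $1$, $p^+$, $\pi^+$, $p^+ \lor \pi^+$ to $J_2(k+1,k)$ remain rigid enough to recover the data. One must also treat the generators on which $f|_M$ is constant, where the local data is only partially pinned down; confirm that the reconstructed global object really is a \emph{disjoint} union --- in particular, that no special point lies on $\pi$, and that the single ambient hyperplane is determined by its traces on the generators; and establish the connectivity statements used for the propagation. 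By contrast, the reduction in the first step and the appeal to the Grassmann classification are routine once the description of the degree-$1$ functions is in hand.
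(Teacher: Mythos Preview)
Your overall strategy---restrict $f$ to each generator $M$, invoke the Grassmann classification on $J_2(n,k)$, then glue the local data along overlaps---is the skeleton of the paper's argument; in particular the paper's Lemma~\ref{lem:polar_pt_trans} is exactly your propagation of the special point across generators, and the paper's ``reduction'' of $f$ is your peeling of the partial ovoid $P^\ast$. Where the paper departs from your sketch is in the endgame: after reducing, it does not attempt a direct generator-by-generator gluing but instead passes to the \emph{quotient} by a point $a$ (so $f_a$ lives on a polar space of rank $n-1$) and runs an induction on $n$ via Theorems~\ref{thm:polar_inductive_step_inc_k} and~\ref{thm:polar_inductive_step}, with the base cases $(n,k)\in\{(2,2),(3,2),(4,2)\}$ for $O^+(2n,2)$ verified by computer (Lemma~\ref{lem:polar_small_dim_base_case}). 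This sidesteps the overlap difficulty you flag for $k=n-2$, at the price of a machine check you do not invoke.

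There is, however, a substantive gap you do not flag. The refined classification (Theorem~\ref{thm:polar_small_dim_main}) contains a case your conclusion misses: when the ambient hyperplane is \emph{degenerate}, $\pi=p_1^\perp$ for a singular point $p_1$, the function
\[
 f \;=\; \bigl(\pi^+\wedge p_1^-\bigr)\;\vee\;\bigvee_{i>1} p_i^+
\]
(with the $p_i$ pairwise non-collinear) is a Boolean degree~$1$ function. On any generator $M\ni p_1$ one has $M\subseteq p_1^\perp=\pi$, so $f|_M=p_1^-$; on generators avoiding $p_1$ the restriction is of the \emph{positive} form $H^+$ or $H^+\vee p_j^+$. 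Thus a single global choice of $f$ versus $1-f$ does not place every $f|_M$ into your list $\{1,p^+,\pi^+,p^+\vee\pi^+\}$, your set $P^\ast$ is not well-defined as described, and after subtracting $\sum_{p\in P^\ast}p^+$ the residual $f'$ need not lie in $\{0,1,\pi^+\}$: in the example it equals $\pi^+\wedge p_1^-$, whose restriction to generators through $p_1$ is still $p_1^-$, not a hyperplane form. The paper repairs this with a \emph{two-sided} reduction---first strip every $p$ with $p^+\to f_S$ for some generator $S$ (your step), then \emph{add back} every $p$ with $p^-\to f_S$---and only after both passes is the residual guaranteed to be of the form $0$, $1$, or $\pi^\pm$ on every generator. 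Without this second pass your argument does not reach the stated conclusion.
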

Our method is more general and covers all Boolean degree $1$ functions on $k$-spaces
of polar spaces with $k, n-k \geq 2$. Our theorem is limited to the $O^+(2n, 2)$ case
as we lack a classification result similar to Theorem~ \ref{thm:k_eq_2_results} for other polar spaces.

Another important family of association schemes is the sesquilinear forms graphs. These include
the bilinear forms graphs, the alternating forms graphs, the Hermitian forms graphs, and
the symmetric bilinear forms graphs~\cite[Chapter 9.5]{Brouwer1989}.
Here we show that the family of non-trivial examples for Boolean degree $1$ functions
on $J_q(4, 2)$, $q$ odd, induces non-trivial examples on $H_q(2, 2)$.
Furthermore, we conjecture a classification of Boolean degree $1$ functions
on $H_q(\ell, k)$ for sufficiently large $k+\ell$, and verify it for $H_2(2, 2)$.

Finite permutation groups naturally give rise to Boolean degree $1$ functions. We discuss
these cases shortly in Section~\ref{ch:groups}, and then apply the classification of
Boolean degree $1$ functions on the symmetric group by Ellis, Friedgut and Pilpel~\cite{EFP} in Section~\ref{ch:multislice}
to obtain a classification of Boolean degree $1$ functions for the multislice, generalizing Theorem~\ref{thm:johnson-deg1_intro}.
%In particular, this gives a new proof of the generalization of Theorem~\ref{thm:hamming_class} to the Hamming cube $H(n, m)$.
%The theorem was shown by Meyerowitz \cite[Theorem 7]{Brouwer2003}.
%\begin{theorem}[Meyerowitz (unpublished)]\label{thm:hamming_class_gen}
%  Every Boolean degree~1 function on $H(n, m)$ is either constant or depends on a single coordinate.
%\end{theorem}

We conclude with some questions for future work in Section~\ref{sec:future}.

\section{Preliminaries}

We hope that this paper is interesting for researchers in the areas of
association schemes, Boolean functions, coding theory, finite geometry, and permutation groups.
Hence, we introduce most of the relevant notation in the following.
In particular, we want to explain why the notions Boolean degree $1$ function, 
completely regular strength $0$ code, Cameron--Liebler line class, and tight set are often the same.

\subsection{Boolean functions}

In all our examples we have some form of \textit{coordinates}: 
elements of $[n]$ for the Hamming graph $H(n, 2)$, the Johnson graph $J(n, k)$, and the multislice;
\textit{points} ($1$-dimensional subspaces) of $\FF_q^n$ for the Grassmann graph $J_q(n, k)$
and for most graphs related to polar spaces; and transpositions $(i~j)$ or similar basic operations
for graphs derived from permutation groups.

We denote the constant one function by $1 = 1^+$
and the zero function by $0 = 1^-$. For a coordinate $x$, we denote the \textit{indicator
function} of $x$ by $x^+$. Similarly, $x^- = 1 - x^+$. We use the same notation
for other natural incidences such as incidence with hyperplanes.
More generally, write $f^+ = f$ and $f^- = 1-f$ for a Boolean function $f$.
For Boolean functions $f$ and $g$, we use Boolean operators such $f \vee g$, $f \wedge g$ and $f \rightarrow g$.
In this setting, a \textit{Boolean degree $1$ function} is 
a $0,1$-valued function on the vertices that can be written as $f = c + \sum_i c_i x_i$.

For a Boolean function $f$ on a domain $D$, we can identify $f$ with the set $\{ x \in D: f(x) = 1 \}$.
While we mostly use logical notation, some reader might prefer set theoretical notation.
Examples for equivalent expressions include $g \rightarrow f$ and $g \subseteq f$, $f \lor g$ and 
$f \cup g$, as well as $f \land g$ and $f \cap g$.

\subsection{Association schemes}\label{subsec:asssoc}

Delsarte established the systematic use of \textit{association schemes} as a tool in combinatorics in his PhD thesis~\cite{Delsarte1973}.
There exist several different definitions of association schemes, and we stick to what is
sometimes known as a symmetric association scheme.

\begin{definition}
  Let $X$ be a finite set, whose members are known as \emph{vertices}. A $d$-class association scheme is a pair $(X, \cR)$,
  where $\cR = \{ R_0, R_1, \ldots, R_d \}$ is a set of binary symmetric relations
  with the following properties:
 \begin{enumerate}[(a)]
  \item $\{ R_0, \ldots R_d \}$ is a partition of $X \times X$.
  \item $R_0$ is the identity relation.
  \item There are constants $p_{ij}^\ell$ such that for $x, y \in X$ with $(x, y) \in R_\ell$
  there are exactly $p_{ij}^\ell$ elements $z$ with $(x, z) \in R_i$ and $(z, y) \in R_j$.
 \end{enumerate}
\end{definition}
We denote $|X|$ by $v$. The relations $R_i$ can be described by their adjacency matrices
$A_i \in \CC^{v \times v}$ defined by
\begin{align*}
  (A_i)_{xy} = \begin{cases}
                1 & \text{ if } (x, y) \in R_i,\\
                0 & \text{ otherwise.}
               \end{cases}
\end{align*}
It is easily verified that the matrices $A_i$ are Hermitian and commute pairwise,
hence we can diagonalize them simultaneously, i.e.\ they have the same eigenfunctions.
From this we obtain pairwise orthogonal, idempotent Hermitian
matrices $E_j \in \CC^{v \times v}$ with the properties (possibly after reordering)
\begin{align}
  &\sum_{j=0}^d E_j = \vI, && E_0 = v^{-1} \vJ,\\
  &A_i = \sum_{j=0}^d P_{ji} E_j, && E_j = v^{-1} \sum_{i=0}^d Q_{ij} A_i
\end{align}
for some constants $P_{ij}$ and $Q_{ij}$. Here $\vI$ is the identity matrix, and $\vJ$ is the all-ones matrix.
A common eigenspace $V_j$ of the adjacency matrices corresponds to the row span of $E_j$; the idempotent $E_j$ is a projection onto $V_j$.
We say that a subset $\cE$ of $E_j$'s \textit{generates} $\langle E_j: 0 \leq j \leq d \rangle_\circ$ 
if all $E_j$ can be written as a (finite) polynomial in elements of $\cE$
using normal addition and the entry-wise product $\circ$.
An association scheme is \textit{$Q$-polynomial} if there exists an idempotent matrix
$E_i$ that generates $\langle E_j \rangle_\circ$. 
In this case we can rename $E_i$ to $E_1$, and uniquely order the idempotents by writing
\begin{align*}
  &E_0 = v^{-1} \vJ = c_0 E_1^{\circ 0}, && E_1 = E_1^{\circ 1},\\
  &E_2 = c_0 E_1^{\circ 0} + c_1 E_1^{\circ 1} + c_2 E_2^{\circ 2},
  &&E_j = \sum_{k=0}^j c_k E_k^{\circ k}.
\end{align*}
(In a few exceptional cases, there are two different idempotents which generate $\langle E_j \rangle_\circ$.)
In a $Q$-polynomial association scheme, a \textit{completely regular strength $0$ code of covering radius $1$}
refers to a Boolean function $f$ which is orthogonal to all $V_j$ with $j > 1$.

The Hamming graph gives rise to a $Q$-polynomial association scheme; here $(x, y) \in R_i$ if the 
Hamming distance between $x$ and $y$ is $i$ (see \cite[\S9.2]{Brouwer1989}).
It is well-known that $V_0 + V_1$ in the usual $Q$-polynomial ordering
corresponds to the span of the indicator functions $x_i^+$. Hence, $f$ being a 
completely regular strength $0$ code of covering radius $1$ just means that $f$ is a Boolean function which
can be written as $f = c + \sum_i c_i x_i^+$. Indeed, this behavior is rather typical
for many of the graphs which we investigate. Whenever this is the case, then we provide
corresponding references that indeed the span of the $x_i^+$'s corresponds
to $V_0 + V_1$.

In some other graphs under investigation, e.g.\ the symmetric group or non-maximal subspaces of polar spaces, 
which do not correspond to a relation of a $Q$-polynomial association scheme, 
we still have a similar behavior in the sense that the span of the $x_i^+$'s corresponds
to the span of very few $V_j$'s, which furthermore have a canonical description (for example, in the symmetric and general linear groups these are the isotypic components corresponding to Young diagrams with at most one cell outside the first row).

It follows from the definition of the matrix $Q$ that the function $g_{x,j}$ defined by $g_{x,j}(y) = Q_{ij}$ if $(x, y) \in R_i$, 
is orthogonal to all $V_{k}$ with $j \neq k$.
As the matrix $Q$ is easily calculated with standard techniques,
this provides an easy way of showing the non-existence of certain
Boolean degree $1$ functions with the help of an integer linear program.
For a $Q$-polynomial association scheme, this goes as follows:
\begin{enumerate}
 \item The outputs $f(y)$ of $f$ are the variables. These are $0,1$-valued, so that $f$ is Boolean.
 \item One set of constraints is $\sum_y f(y) g_{x,k}(y) = 0$ for all vertices $x$
	and all $k > 1$. This guarantees that $f$ is a degree $1$ function.
 \item Another set of constraints is $f \neq h$ for all known Boolean degree $1$ functions $h$.
\end{enumerate}
We use this technique to show some classification results
for some finite cases.

If Boolean degree $1$ functions $f$ correspond to vectors in $V_0 + V_1$,
then we can often give conditions on the size of $f$.
Let us repeat the following well-known result (see \cite[Theorem 3]{Eisfeld1998} for a proof).

\begin{lemma}\label{lem:ev_char}
  Suppose that $f$ is a Boolean function in $V_0 + V_1$.
  If $x$ is a vertex with $f(x) = 0$, then there are exactly
  $|f| \cdot (P_{01} - P_{11})/v$ vertices $y$ with $f(y) = 1$
  and $(x, y) \in R_1$.
\end{lemma}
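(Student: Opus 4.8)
The plan is to use the spectral decomposition of $f$ with respect to the primitive idempotents $E_0$ and $E_1$, together with the action of the adjacency matrix $A_1$, and then to read off the desired count by evaluating at the vertex $x$.

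First I would write $f = E_0 f + E_1 f$, which is legitimate precisely because $f \in V_0 + V_1$ by hypothesis. Since $E_0 = v^{-1}\vJ$, the first summand is the constant vector $E_0 f = (|f|/v)\,\mathbf{1}$, where $\mathbf{1}$ denotes the all-ones vector; accordingly set $g := E_1 f = f - (|f|/v)\,\mathbf{1}$. Next I would record how $A_1$ acts on the two relevant eigenspaces: from $A_i = \sum_{j} P_{ji} E_j$ and $E_j E_k = \delta_{jk} E_j$ we get $A_1 E_0 = P_{01} E_0$ and $A_1 E_1 = P_{11} E_1$, so $A_1$ acts as the scalar $P_{01}$ on $V_0$ and as $P_{11}$ on $V_1$. (Observe that $P_{01}$ is just the valency of the graph $R_1$, as $A_1 \mathbf{1} = P_{01}\mathbf{1}$ and the row sums of $A_1$ equal that valency.) Hence
\[
  A_1 f = A_1 E_0 f + A_1 E_1 f = P_{01}\,\frac{|f|}{v}\,\mathbf{1} + P_{11}\, g .
\]

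Then I would evaluate both sides at $x$. The left-hand side $(A_1 f)(x) = \sum_{y\,:\,(x,y)\in R_1} f(y)$ equals exactly the number of vertices $y$ with $(x,y)\in R_1$ and $f(y)=1$, which is the quantity to be computed. On the right-hand side, $g(x) = f(x) - |f|/v = -|f|/v$, using the assumption $f(x)=0$. Substituting yields
\[
  (A_1 f)(x) = P_{01}\,\frac{|f|}{v} - P_{11}\,\frac{|f|}{v} = |f|\cdot\frac{P_{01}-P_{11}}{v},
\]
which is the claimed identity.

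There is essentially no obstacle here: the argument is the standard eigenvalue-counting trick. The only points worth a word of care are the identification of $P_{01}$ with the valency of $R_1$ and the fact that the $V_0$-component of $f$ is forced to be $(|f|/v)\,\mathbf{1}$, both of which follow at once from $E_0 = v^{-1}\vJ$ and $\sum_j E_j = \vI$. It is also worth noting that Booleanity of $f$ is used only through $f(x)=0$ and through $f(y)\in\{0,1\}$, so that the sum $\sum_{y\,:\,(x,y)\in R_1} f(y)$ genuinely counts a set of vertices.
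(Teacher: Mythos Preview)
Your argument is correct and is exactly the standard spectral-counting proof; the paper itself does not give a proof of this lemma but merely cites Eisfeld (Theorem~3 in \cite{Eisfeld1998}), and what you wrote is essentially that standard argument.
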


Hence, $|f| \cdot (P_{01} - P_{11})/v$ is an integer.
For example, for $J(n, k)$ we have $P_{01} = k(n-k)$, 
$P_{11} = (k-1)(n-k-1) - 1$, and $v = \binom{n}{k}$.
Hence, $(P_{01} - P_{11})/v = k / \binom{n-1}{k-1}$.
This is a non-trivial condition: as an example, for $J(10, 4)$
this implies that the weight (number of $1$s) of a Boolean degree $1$ function is divisible by $21$.

In the following table, we list this divisibility condition for the association
schemes we study in the following sections (see the corresponding sections for the notation).
We refer to \cite[\S9]{Brouwer1989} and \cite{Brouwer2017} for the eigenvalues.

\begin{center}
\begin{tabular}{llll} \toprule
 Graph & $P_{01} - P_{11}$ & $(P_{01} - P_{11})/v$ & Conditions \\ \midrule
 $H(n, m)$ & $m$ & $m^{-n+1}$ & \\
 $J(n,k)$ & $n$ & $\frac{k}{\binom{n-1}{k-1}}$ & \\
 $J_q(n, 2)$ & $\frac{q^n-1}{q-1}$ & $\frac{q^2-1}{q^{n-1}-1}$ & $n \geq 4$ \\
 $J_q(n, k)$ & $\frac{q^n-1}{q-1}$ & $\prod_{i=2}^k \frac{q^i-1}{q^{n-i+1}-1}$ & $n \geq 4$, $k, n-k \geq 2$ \\
%  $H_q(2, k)$ & $q^{k+1}$ & $q^{-k+1}$ & $n \geq 4$, $k \geq 2$ \\
 $H_q(\ell, k)$ & $q^{k+1}$ & $q^{1-k(\ell-1)}$ & $n \geq 4$, $k \geq \ell \geq 2$ \\
 $C_q(n, n, e)$ & $q^{n-1+e}+1$ & $\prod_{i=1}^{n-1} (q^{i-1+e}+1)^{-1}$ & \\
 $Q_q(n)$ & $q^{2n-1}$ & $q^{-(n-1)^2}$ &  \\
 $A_q(n)$ & $q^{2n-3}$ & $q^{-n(n-1)/2 + 2n - 3}$ &  \\
 \bottomrule
\end{tabular}
\end{center}
Note that in specific cases more detailed conditions are known, see for example
\cite{Gavrilyuk2014} for $J_q(4, 2)$.

\paragraph{Finite geometry and permutation groups}

The connection between association schemes, finite geometry and permutation groups
is well-explained in \cite{Vanhove2011}, in particular \S3.3.2.

\subsection{Coordinate-induced subgraphs}\label{sec:coord}

We say that an induced (coordinatized) subgraph $\Gamma'$ of a (coordinatized) graph 
$\Gamma$ is a \textit{coordinate-induced} subgraph if the indicator functions of coordinates $x_i$ of $\Gamma'$ 
are the restrictions of the indicator functions of coordinates $x_i$ of $\Gamma$.
For example, the Johnson graph $J(n, k)$ is a coordinate-induced subgraph of $H(n, 2)$, in which we only consider
vertices of $H(n, 2)$ with exactly $k$ entries equal to $1$.

In contrast, the Johnson graph $J(3, 2)$ can be naturally embedded into the Fano plane $J_2(3, 2)$,
e.g.\ if $\{ 1, 2, 3 \}, \{ 1, 4, 5 \}, \{ 1, 6, 7 \}, \{ 2, 4, 7 \}, \{ 2, 5, 6 \}, 
\{ 3, 4, 6 \}, \{ 3, 5, 7 \}$ is our model of the Fano plane, then the induced subgraph on 
$\{ 1, 2, 3 \}, \{ 1, 4, 5 \}, \{ 3, 5, 7 \}$ is $J(3, 2)$. But this embedding
is not coordinate-induced as the degree~$1$ polynomials on $J(3, 2)$ correspond to the
degree~$1$ polynomials in $x_1,x_3,x_5$ rather than all of $x_1, \ldots, x_7$.

We make the following easy observation.

\begin{lemma}\label{lem:natural_embedding_rule}
  Let $\Gamma$ be a (coordinatized) graph and let $\Gamma'$ be a coordinate-induced
  subgraph of $\Gamma$. If $f$ is a degree $d$ function of $\Gamma$, then 
  the restriction of $f$ to $\Gamma'$ is a degree $d$ function.
\end{lemma}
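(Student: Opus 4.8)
The statement to prove is Lemma~\ref{lem:natural_embedding_rule}: if $\Gamma'$ is a coordinate-induced subgraph of a coordinatized graph $\Gamma$ and $f$ has degree $d$ on $\Gamma$, then $f|_{\Gamma'}$ has degree $d$ on $\Gamma'$. The proof should be a direct unwinding of the definition of "coordinate-induced" from Section~\ref{sec:coord}. First I would recall that, by hypothesis, $\Gamma$ comes with coordinates whose indicator functions we may call $x_1^+, \dots, x_n^+$ (with $x_i^- = 1 - x_i^+$), and that "coordinate-induced" means precisely that the coordinate indicator functions of $\Gamma'$ are the restrictions $x_i^+|_{\Gamma'}$ of those of $\Gamma$ (restricted to the vertex set $V(\Gamma') \subseteq V(\Gamma)$). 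A degree $d$ function on $\Gamma$ is by definition one that can be written as a polynomial of degree at most $d$ in the $x_i^+$, say $f = \sum_{|T| \le d} c_T \prod_{i \in T} x_i^+$.

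**The key step.** The one-line heart of the argument is that restriction to a subset of the domain is a ring homomorphism on functions: for functions $g, h$ on $V(\Gamma)$ one has $(g + h)|_{\Gamma'} = g|_{\Gamma'} + h|_{\Gamma'}$ and $(gh)|_{\Gamma'} = (g|_{\Gamma'})(h|_{\Gamma'})$, because both operations are computed pointwise. Applying this to the polynomial expression for $f$ gives
\[
  f|_{\Gamma'} \;=\; \sum_{|T| \le d} c_T \prod_{i \in T} \bigl(x_i^+|_{\Gamma'}\bigr),
\]
which, since each $x_i^+|_{\Gamma'}$ is exactly a coordinate indicator function of $\Gamma'$, exhibits $f|_{\Gamma'}$ as a polynomial of degree at most $d$ in the coordinates of $\Gamma'$. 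Hence $f|_{\Gamma'}$ has degree at most $d$, as claimed. I would also remark that the same argument applies verbatim if one uses the $\pm$ monomial representation (products of $x_i^+$ and $x_j^-$) instead of the multilinear one, since $x_i^-|_{\Gamma'} = 1 - x_i^+|_{\Gamma'}$ is again a degree $1$ function on $\Gamma'$.

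**The main obstacle.** Frankly, there is no mathematical obstacle here — the lemma is, as the paper says, an "easy observation," and its entire content is bookkeeping about what the definition of coordinate-induced is set up to guarantee. The only thing that needs care is being precise about what data a "coordinatized graph" carries and ensuring the restriction is taken with respect to the correct (possibly strict) subset of vertices; the cautionary example of $J(3,2)$ embedded non-coordinate-induced in the Fano plane $J_2(3,2)$ is exactly the case the hypothesis is designed to exclude, and it is worth keeping in mind as a sanity check that the proof genuinely uses the coordinate-induced hypothesis (and would fail without it, since an arbitrary embedding need not pull the ambient coordinate functions back to the intrinsic ones). So the "hard part" is purely expository: stating the definitions cleanly enough that the one-line homomorphism argument reads as a complete proof rather than a tautology.
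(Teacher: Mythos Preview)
Your proof is correct and is precisely the argument the paper has in mind: the paper does not actually supply a proof of this lemma, introducing it only as an ``easy observation,'' and your unwinding of the definition via the ring-homomorphism property of restriction is exactly the intended justification.
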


Another example for a coordinate-induced subgraph is the natural embedding of $J_2(4, 2)$ in $J(15, 3)$.
Clearly, this is a coordinate-induced subgraph, but we already
saw that the degree $1$ functions on $J(15, 3)$ are either constant or $x^\pm$,
whereas we have more examples of degree~$1$ functions on $J_2(4, 2)$.
Hence, it is clear that the reverse statement is not true.

\section{Johnson graphs}

As a warm-up for the Grassmann scheme, we will provide a new proof for Theorem \ref{thm:johnson-deg1_intro}.
Our proof for the analogous result for the Grassmann scheme will be similar though much longer
and involving many more cases. This is essentially due to the fact that on the Johnson scheme
$x \notin S$ and $x \in \overline{S}$ for a subset $S$ are the same, whereas on the Grassmann scheme 
$x \notin S$ and $x \in S^\perp$ for a subspace $S$ are different (where $S^\perp$ denote the orthogonal complement of $S$).

Let $P(k,\ell)$ be the following statement:

\begin{center}
\emph{Every Boolean degree~1 function on $J(k+\ell,k)$ is equal to $0^\pm$ or to $x^\pm$ for some $x \in [n]$.}
\end{center}

The proof is by induction. First we show $P(2, 2)$, then we show that $P(k, \ell)$ implies $P(k+1, \ell)$ and $P(k, \ell+1)$.

\textbf{Base case.} There are $2^{2^4}$ Boolean functions on $J(4, 2)$. These can be checked exhaustively and then $P(2, 2)$ follows.

\textbf{Inductive step.}
If $P(k, \ell)$ is true, then $P(\ell, k)$ is true as $J(k+\ell, k)$ and $J(k+\ell, \ell)$ are isomorphic.
Hence, if we show that $P(k, \ell)$ implies $P(k+1, \ell)$, then we also show that $P(k, \ell)$ implies $P(k, \ell+1)$.
Therefore it suffices to show that $P(k,\ell)$ implies $P(k+1,\ell)$. Let $f$ be a Boolean degree~1 function on $J(k+\ell+1,k+1)$. The idea is to consider \emph{restrictions} of $f$ into subdomains isomorphic to $J(k+\ell,k)$. For every $a \in [k+\ell+1]$, let $f_a$ be the restriction of $f$ to the sets containing $a$. Since the domain of $f_a$ is isomorphic to $J(k+\ell,k)$, we know that $f_a \in \{0,1,x^\pm\}$, where $x \neq a$.

The main idea of the proof is to consider the possible values of $f_a,f_b$ for $a \neq b$.

\begin{lemma} \label{lem:johnson-deg1-pair}
 Let $f\colon J(k+\ell+1,k+1) \to \{0,1\}$, where $k,\ell \geq 2$, and suppose that $P(k,\ell)$ holds.
 
 For any $a \neq b$, one of the following options holds:
\begin{enumerate}
 \item $f_a = f_b = 1^\pm$.
 \item $f_a = f_b = x^\pm$, where $x \neq a,b$.
 \item $f_a = 1^\pm$ and $f_b = a^\pm$ (with the same sign), or vice versa.
% \item $f_a = b^\pm$ and $f_b = a^\pm$ (with the same sign).
\end{enumerate}
\end{lemma}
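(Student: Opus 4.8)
The plan is to exploit the fact that the domains of $f_a$ and $f_b$ overlap in exactly the sets $S$ containing both $a$ and $b$, and that on such sets $f_a(S) = f(S) = f_b(S)$. Since $P(k,\ell)$ tells us that each of $f_a,f_b$ is constant or of the form $x^{\pm}$ for a single coordinate $x$, this overlap condition — read off on a few suitably chosen sets $S \supseteq \{a,b\}$ — will almost completely determine the type of $f_b$ from the type of $f_a$. Before the case analysis I would use the two evident symmetries to cut down the work: replacing $f$ by $1-f$ (which replaces $f_a$ by $1-f_a$ and turns each of the three listed options into itself with all signs flipped), and interchanging $a$ and $b$.

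With these reductions in place I would distinguish cases according to the type of $f_a$. If $f_a$ is constant, I may assume $f_a = 1$; then $f \equiv 1$ on every set through $a$, hence on every set through $\{a,b\}$, and — using $\ell \geq 2$ to produce, for any prescribed coordinate, a $(k+1)$-set through $\{a,b\}$ that does or does not contain it — the only surviving possibilities are $f_b = 1$ (option~1) and $f_b = a^+$ (option~3). If $f_a = c^{\pm}$ for some coordinate $c \notin \{a,b\}$, then on sets through $\{a,b\}$ the value of $f$ depends genuinely on whether $c \in S$, and comparing this with the admissible forms of $f_b$ forces $f_b = c^{\pm}$ with the same sign (option~2). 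This leaves the case $f_a = b^{\pm}$; here, assuming $f_a = b^+$, the overlap condition at once rules out $f_b$ being a non-constant indicator other than $a^+$, so $f_b \in \{1,a^+\}$, and the crux is to exclude $f_b = a^+$.

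I expect this last exclusion to be the only real obstacle; everything else is bookkeeping over the finitely many pairs of types and signs. To rule out $f_a = b^+$ and $f_b = a^+$, I would observe that the degree-$\leq 1$ function $h := f - x_a^+ - x_b^+ + 1$ then vanishes on every set through $a$ and on every set through $b$. But the degree-$\leq 1$ functions on $J(k+\ell+1,k+1)$ that vanish on all sets through a fixed coordinate $x$ form, by a dimension count, the line spanned by $x^-$; applying this with $x = a$ and then using the vanishing on sets through $b$ forces $h \equiv 0$, i.e.\ $f = x_a^+ + x_b^+ - 1$. Since $\ell \geq 2$, some $(k+1)$-set contains neither $a$ nor $b$, and there $f$ takes the value $-1$, contradicting that $f$ is Boolean. (Alternatively one can bring in a third coordinate $c$: the already-settled ``third coordinate'' subcase applied to the pairs $(a,c)$ and $(b,c)$ would force $f_c = b^+$ and $f_c = a^+$ simultaneously, which is impossible.)
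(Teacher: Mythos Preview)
Your proposal is correct and follows essentially the same route as the paper: reduce by the symmetries $f\mapsto 1-f$ and $a\leftrightarrow b$, then run through the finitely many types for $f_a$ and eliminate incompatible types for $f_b$ by producing a $(k+1)$-set $S\ni a,b$ on which $f_a(S)\neq f_b(S)$. The paper organizes this as three explicit cases ($f_a=1^+$, $f_a=b^+$, $f_a=x^+$ with $x\neq a,b$) and, like you, isolates the pair $(f_a,f_b)=(b^+,a^+)$ as the only surviving obstruction.

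The one genuine difference is how you dispose of that pair. The paper introduces a third coordinate $c$ and uses the already-established ``$x\neq a,b$'' subcase on the pairs $(a,c)$ and $(b,c)$ to force $f_c=b^+$ and $f_c=a^+$ simultaneously --- exactly your parenthetical alternative. Your primary argument is instead algebraic: set $h=f-x_a^+-x_b^+ +1$, observe $h$ vanishes on all sets through $a$ and on all sets through $b$, use that the kernel of restriction to $\{S:a\in S\}$ on degree-$\le 1$ functions is the line $\RR\cdot x_a^-$, and conclude $h\equiv 0$, whence $f$ takes the value $-1$ on a set avoiding $a,b$ (which exists since $\ell\ge 2$). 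This is a clean shortcut that avoids the extra layer of case analysis on $f_c$; the paper's third-coordinate trick, on the other hand, stays entirely within the combinatorial witness-set framework and does not appeal to the structure of the space of degree-$\le 1$ functions. Both are valid and short.
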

\begin{proof}
%  We prove this using what we term the \emph{sudoku method}. 
 We illustrate our method by showing that it cannot be the case that $f_a = x^-$ and $f_b = y^-$, where $a,b,x,y$ are all distinct. Since $k+1 \geq 3$ and $\ell \geq 2$, there exists a set $S$ which contains $a,b,x$ but not $y$. We will denote this for brevity $\witness{S}{abx\bar{y}}$ (in the sequel we will sometimes use the notations $x^+ = x$ and $x^- = \bar{x}$ in this context). Then $f_a(S) = 0$ (since $x \in S$) whereas $f_b(S) = 1$ (since $y \notin S$).
More generally, a set conforming to such a condition exists as long as we specify at most three elements in the set and at most two out of the set. 
  
 We consider three cases: $f_a = 1^\pm$, $f_a = b^\pm$, and $f_a = x^\pm$ for $x \neq b$. Below we use $x,y$ to denote two different indices which differ from $a,b$.
 \begin{itemize}
  \item \textbf{Case 1:} $f_a = 1^+$. In this case, we claim that $f_b \in \{1^+,a^+\}$. We show this by ruling out all other cases:
  \begin{itemize}
   \item $f_b \in \{1^-,a^-\}$: Let $\witness{S}{ab}$. Then $f_a(S) = 1$ but $f_b(S) = 0$.
   \item $f_b = x^\pm$: Let $\witness{S}{abx^\mp}$. Then $f_a(S) = 1$ but $f_b(S) = 0$.
  \end{itemize}
  \item \textbf{Case 2:} $f_a = b^+$. In this case, we claim that $f_b \in \{1^+,a^+\}$. We show this by ruling out all other cases:
  \begin{itemize}
   \item $f_b \in \{1^-,a^-\}$: Let $\witness{S}{ab}$. Then $f_a(S) = 1$ but $f_b(S) = 0$.
   \item $f_b = x^\pm$: Let $\witness{S}{abx^\mp}$. Then $f_a(S) = 1$ but $f_b(S) = 0$.
  \end{itemize}
  \item \textbf{Case 3:} $f_a = x^+$. In this case, we claim that $f_b = x^+$ as well. We show this by ruling out all other possibilities:
  \begin{itemize}
   \item $f_b \in \{1^+,a^+\}$: Let $\witness{S}{ab\bar{x}}$. Then $f_a(S) = 0$ but $f_b(S) = 1$.
   \item $f_b \in \{1^-,a^-,x^-\}$: Let $\witness{S}{abx}$. Then $f_a(S) = 1$ but $f_b(S) = 0$.
   \item $f_b = y^\pm$: Let $\witness{S}{ab\bar{x}y^\pm}$. Then $f_a(S) = 0$ but $f_b(S) = 1$.
  \end{itemize}
 \end{itemize}
 
 If $f_a \in \{1^-,b^-,x^-\}$ then we consider $1-f$ (and so $1-f_a,1-f_b$) to obtain analogous results.
 
 This case analysis shows that the possible values of $f_a,f_b$ are
\begin{align}
 (f_a,f_b) \in \{(1^\pm,1^\pm), (1^\pm,a^\pm), (b^\pm, 1^\pm), (b^\pm, a^\pm), (x^\pm,x^\pm)\},\label{eq:step_between_johnson_sodoku}
\end{align}
 where in all cases the signs agree. It remains to rule out the case $(f_a,f_b) = (b^\pm,a^\pm)$. To this end, we pick a third coordinate $c \neq a,b$, and consider the possible value of $f_c$ when $f_a=b^+$ and $f_b=a^+$. Considering the pair $f_a,f_c$, we see that $f_c = b^+$. Considering the pair $f_b,f_c$, we see that $f_c = a^+$. We reach a contradiction by considering $\witness{S}{a\bar{b}c}$, since $b^+(S) = 0$ whereas $a^+(S) = 1$.
\end{proof}

From here the proof is very easy.

\begin{lemma} \label{lem:johnson-deg1-inductive}
 Let $f\colon J(k+\ell+1,k+1) \to \{0,1\}$, where $k,\ell \geq 2$, and suppose that $P(k,\ell)$ holds. Then $f \in \{1^\pm,x^\pm\}$.
 
 In other words, $P(k,\ell)$ implies $P(k+1,\ell)$. Similarly, $P(k,\ell)$ implies $P(k,\ell+1)$.
\end{lemma}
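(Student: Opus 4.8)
The plan is to use Lemma~\ref{lem:johnson-deg1-pair} to "patch together" the restrictions $f_a$ over all coordinates $a \in [k+\ell+1]$ into a single global description of $f$. First I would fix an arbitrary coordinate $a$ and examine $f_a$, which by $P(k,\ell)$ lies in $\{1^\pm, x^\pm\}$ with $x \neq a$. There are two cases, and up to replacing $f$ by $1-f$ I may assume the "$+$" sign throughout (since complementing $f$ complements every $f_b$ simultaneously, by the remark in the proof of Lemma~\ref{lem:johnson-deg1-pair}).

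In the first case, $f_a = x^+$ for some coordinate $x \neq a$. Then for every other coordinate $b \neq a$, Lemma~\ref{lem:johnson-deg1-pair} forces $(f_a, f_b)$ into one of the listed patterns; with $f_a = x^+$ (a nonconstant function not of the form $b^\pm$), the only compatible option is $f_b = x^+$ as well — for $b = x$ this reads as the constant... wait, more carefully: when $b = x$ the pair $(f_x, f_a)$ with $f_a = x^+$ must be $(1^+, a^+)$-type or similar, so $f_x \in \{1^+\}$ restricted to sets containing $x$, which is consistent with $f = x^+$. In any event every $f_b$ agrees with $x^+$ on its domain (the sets containing $b$), and since every set of size $k+1 \geq 3$ contains some coordinate, the restrictions cover the whole domain and glue to give $f = x^+$ globally. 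In the second case, $f_a = 1^+$. Then for each $b \neq a$, Lemma~\ref{lem:johnson-deg1-pair} gives $f_b \in \{1^+, a^+\}$. If $f_b = 1^+$ for all $b$, then $f = 1^+$ and we are done. Otherwise $f_b = a^+$ for some $b$; I would then show, using a third coordinate $c$ and Lemma~\ref{lem:johnson-deg1-pair} applied to the pairs $(f_a,f_c)$ and $(f_b,f_c)$, that $f_c = a^+$ for every $c \neq a$, while $f_a = 1^+$ restricted to sets containing $a$ is identically $1$ there — which is exactly $a^+$ on those sets. Hence all restrictions agree with $a^+$ and glue to $f = a^+$.

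The main obstacle — really the only subtlety — is bookkeeping the sign and the degenerate overlaps when the "special" coordinate ($x$ or $a$) itself plays the role of $b$ in Lemma~\ref{lem:johnson-deg1-pair}, since then some of the options in that lemma collapse (e.g.\ $a^\pm$ restricted to sets containing $a$ is constant). I would handle this by noting that consistency of $f$ is automatic: any two restrictions $f_a, f_b$ agree on their common domain (sets containing both $a$ and $b$), so once I know each $f_b$ as an element of $\{1^+, a^+, x^+\}$ the gluing is forced, and I only need that the candidate global functions $1^+$, $a^+$, $x^+$ actually restrict correctly — which is immediate. The final sentence of the lemma, that $P(k,\ell)$ also implies $P(k,\ell+1)$, then follows from the isomorphism $J(k+\ell+1,k+1) \cong J(k+\ell+1,\ell+1)$ as already observed before the statement of Lemma~\ref{lem:johnson-deg1-pair}.
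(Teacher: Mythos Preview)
Your proposal is correct and follows essentially the same approach as the paper: determine every restriction $f_b$ via Lemma~\ref{lem:johnson-deg1-pair} and then glue, using that the domains of the $f_b$ cover $J(k+\ell+1,k+1)$. The paper's case split is slightly cleaner---it asks globally whether \emph{some} $f_a$ equals $x^\pm$ (your Case~1) or whether \emph{all} $f_a$ are constant (your Case~2)---which avoids your Case~2 sub-case where $f_b = a^+$; but that sub-case simply puts you back into Case~1 with the roles of $a$ and $b$ swapped, so nothing is lost. One small slip: the isomorphism you quote at the end should be $J(k+\ell+1,k) \cong J(k+\ell+1,\ell+1)$ (equivalently, $P(k,\ell+1) \Leftrightarrow P(\ell+1,k)$), not $J(k+\ell+1,k+1) \cong J(k+\ell+1,\ell+1)$; the duality argument you point to in the paper is the right one, you just transcribed the parameters incorrectly.
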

\begin{proof}
 We will find the possible values of $\{f_i : i \in [k+\ell+1]\}$. Since the domains of $f_i$ cover all of $J(k+\ell+1,k+1)$, this will allow us to determine~$f$.

 If $f_a = x^\pm$ for some $a$ then Lemma~\ref{lem:johnson-deg1-pair} shows that $f_b = x^\pm$ for all $b \neq x$, and $f_x = 1^\pm$. It is not hard to check that $f = x^+$.
 
 Otherwise, $f_a = 1^\pm$ for all $a$. Lemma~\ref{lem:johnson-deg1-pair} shows that all $f_a$ are equal to the same constant, and so $f$ itself is equal to this constant as well.
\end{proof}

\section{Grassmann graphs}

We denote the Grassmann graph by $J_q(n,k)$.
It is well-known that completely regular strength 0 codes of covering radius 1 correspond
to Boolean degree $1$ functions, as the first eigenspace of the scheme
is spanned by the $x_i$ \cite[\S3.2]{Vanhove2011}.

Let $f$ be a Boolean degree 1 function on $J_q(n, k)$.
We have the following trivial examples for $f$:
\begin{enumerate}[(a)]
 \item $f = 1^\pm$.
 \item $f = p^\pm$ for a point $p$.
 \item $f = \pi^\pm$ for a hyperplane $\pi$.
 \item $f = (p \lor \pi)^\pm$ for a point $p$ and a hyperplane $\pi$,
  where $p \notin \pi$.
\end{enumerate}
We call $f$ \textit{trivial} if $f$ is one of the above. 
Let us restate Theorem \ref{thm:grassmann_class_intro} slightly differently:
\begin{theorem}\label{thm:grassmann_class}
  Let $q \in \{ 2, 3, 4, 5 \}$. Then all Boolean degree $1$ functions on $J_q(n, k)$
  are trivial if $k, n-k \geq 2$ and either (a) $n \geq 5$ or (b) $n=4$ and $q=2$.
\end{theorem}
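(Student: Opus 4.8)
\textbf{The plan} is to follow the inductive skeleton of the proof of Theorem~\ref{thm:johnson-deg1_intro}, reducing the ambient dimension by one at each step, with Theorem~\ref{thm:k_eq_2_results} serving as the base case; the genuinely new work is the ``gluing'' step that replaces Lemmas~\ref{lem:johnson-deg1-pair} and~\ref{lem:johnson-deg1-inductive}. First I would record the two reduction maps. Fix a point $p$ and restrict $f$ to the $k$-spaces through $p$; passing to the quotient $\FF_q^n/p \cong \FF_q^{n-1}$ turns this subdomain into $J_q(n-1,k-1)$, and since a point $p'$ of $\FF_q^{n-1}$ is the image of the line $\langle p,p'\rangle$ of $\FF_q^n$, while ``$S \supseteq \langle p,p'\rangle$'' is simply ``$p' \subseteq S/p$'' on this subdomain, the restrictions of the point-indicators $p''^+$ of $\FF_q^n$ are exactly the constant $1$ together with all point-indicators of $\FF_q^{n-1}$. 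Thus this is a \emph{coordinate-induced} subgraph in the sense of Section~\ref{sec:coord}, and by Lemma~\ref{lem:natural_embedding_rule} the restriction $f_p$ is again a Boolean degree~$1$ function, hence trivial by induction. Dually, fixing a hyperplane $\pi$ and restricting $f$ to the $k$-spaces inside $\pi$ gives a coordinate-induced copy of $J_q(n-1,k)$ (point-indicators $p^+$ with $p\subseteq\pi$ restrict to the point-indicators of $\FF_q^{n-1}$, the others restrict to $0$), so $f^\pi$ is Boolean of degree~$1$ as well. Running the induction on $n$: the base cases $n=4$ (then $q=2$, $k=2$) and $n=5$ ($k\in\{2,3\}$, the second by $J_q(5,3)\cong J_q(5,2)$) are Theorem~\ref{thm:k_eq_2_results}; for $n\ge 6$ the cases $k=2$ and $k=n-2$ are again Theorem~\ref{thm:k_eq_2_results} (using $J_q(n,n-2)\cong J_q(n,2)$), and in the remaining range $3\le k\le n-3$ both $J_q(n-1,k-1)$ and $J_q(n-1,k)$ satisfy the hypotheses ($n-1\ge 5$, and the middle parameter lies strictly between $1$ and the codimension), so the inductive hypothesis gives that every $f_p$ and every $f^\pi$ is trivial.

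\textbf{The gluing step} is therefore: if every point-restriction $f_p$ and every hyperplane-restriction $f^\pi$ is one of $1^\pm$, $p'^\pm$, $(\pi')^\pm$, $(p'\vee\pi')^\pm$ on the corresponding smaller Grassmann scheme, then $f$ itself is trivial. I would prove this by the $q$-analogue of the ``sudoku'' argument of Section~3. The basic tool is a collection of subspace-existence lemmas asserting that a $k$-space of $\FF_q^n$ with prescribed containment and non-containment relations to a bounded number of points and hyperplanes exists whenever the evident dimension count permits it; these play the role of the witnesses ``$\witness{S}{ab\bar{x}}$'' in the Johnson proof. One first analyses, for a single point $p$ and a single hyperplane $\pi$ with $p\notin\pi$, how the local shapes of $f_p$ and $f^\pi$ can interact — this is where the four families really differ and the admissible list is considerably longer than the one in~\eqref{eq:step_between_johnson_sodoku}, and where the ``centre'' point or hyperplane of one restriction constrains those of neighbouring restrictions. (For example, if $f=(p\vee\pi)^\pm$, then $f_{p'}$ is a point-indicator when $p'\notin\pi$ but of the mixed type $(p''\vee\pi'')^\pm$ when $p'\in\pi$, so even within one trivial family the restrictions are not uniform.) Propagating these constraints across all points and hyperplanes, and using that the point-restrictions alone already cover every $k$-space, one shows that the only globally coherent assignments are the four trivial families, which is the analogue of Lemma~\ref{lem:johnson-deg1-inductive}. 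Because ``$p\notin S$'' and ``$S\subseteq p^\perp$'' are distinct conditions in the Grassmann setting, unlike on the Johnson scheme, I expect one must carry the point- and hyperplane-restrictions simultaneously; in particular the mixed example $p^+\vee\pi^+$ only becomes visible when both are tracked.

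\textbf{The hard part} is exactly this gluing lemma. In contrast with the Johnson case, where the local shapes were only $0,1$ and $x^\pm$, here there are four families, they can be ``mixed'' via $p^+\vee\pi^+$, and the combinatorics of which ordered pairs $(f_p,f_{p'})$, $(f_p,f^\pi)$ and $(f^\pi,f^{\pi'})$ are mutually consistent is intricate; ruling out the spurious combinations and assembling the survivors into a global function is a long but elementary case analysis resting on the subspace-existence lemmas that supply the required witnesses. Everything else — the two reduction maps, the verification that they are coordinate-induced, the bookkeeping of the parameter ranges in the induction on $n$, and the final step reconstructing $f$ from the cover of $J_q(n,k)$ by the subdomains of the $f_p$ — is routine.
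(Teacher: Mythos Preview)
Your plan is essentially the paper's: induct on $n$ with Theorem~\ref{thm:k_eq_2_results} as the base, restrict through a point to land in a coordinate-induced copy of $J_q(n-1,k-1)$, and glue the now-trivial local pictures by a long case analysis playing the role of Lemma~\ref{lem:johnson-deg1-pair}. The one substantive difference is that the paper never uses the hyperplane restrictions $f^\pi$: since a hyperplane $\pi\ni a$ of $\FF_q^n$ descends to a hyperplane $\pi/a$ of the quotient $\FF_q^n/a$, the mixed family $p^+\vee\pi^+$ is already visible in $f_a$ (it becomes $(\langle a,p\rangle/a)^+\vee(\pi/a)^+$ when $a\in\pi$, and a bare point-indicator when $a\notin\pi$), so your expectation that both restriction types must be carried simultaneously is too pessimistic. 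Working with the $f_a$ alone, the paper's gluing step (Lemmas~\ref{lem:char_by_quotient_case_pt}--\ref{lem:hyp_compl}, assembled in Theorem~\ref{thm:grassmann_inductive_step}) first classifies which trivial $f$ on the quotient can have a prescribed shape at a single fixed point $a$, and then propagates that information across all points; this roughly halves the bookkeeping relative to your two-restriction proposal, at the price of the three somewhat delicate classification Lemmas~\ref{lem:char_by_quotient_case_pt}--\ref{lem:char_by_quotient_case_pt_pl}.
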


The rest of this section is concerned with a proof of this result.

Our proof for the Grassmann scheme has a similar structure as our proof for the Johnson scheme.
One central difference between $J(n, k)$ and $J_q(n, 2)$ is that for $J(4, 2)$ all Boolean degree $1$
functions are trivial, while there do exist non-trivial examples for $J_q(4, 2)$ when $q > 2$.
For the base case we use Theorem \ref{thm:k_eq_2_results}.

As mentioned in the introduction, 
there are no non-trivial examples for Boolean degree $1$ functions on $J_q(5, 2)$ known if $q>2$, so the following conjecture is (in some sense) the strongest possible.
\begin{conjecture}\label{conj:k_eq_2}
  A Boolean degree $1$ function on $J_q(n, 2)$ is trivial if either (a) $n \geq 5$ or (b) $n=4$ and $q=2$.
\end{conjecture}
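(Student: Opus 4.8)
The plan is to mirror the inductive strategy of the Johnson case, replacing stars by hyperplane-slices. First note that by Theorem~\ref{thm:k_eq_2_results} the cases $q \in \{2,3,4,5\}$ (and in particular $n=4$, $q=2$) are already settled, so the open content of Conjecture~\ref{conj:k_eq_2} is the range $n \geq 5$ with $q>5$; I describe a strategy for this range. Fix a Boolean degree $1$ function $f$ on $J_q(n,2)$ and observe that $J_q(n,2)$ has two natural sub-geometries through which one might recurse: the \emph{star} of a point $p$ (the $2$-spaces containing $p$), which is isomorphic to $J_q(n-1,1)$ and hence a complete graph on which \emph{every} Boolean function is degree $1$; and the \emph{hyperplane-slice} $J_q(n-1,2)$ consisting of the $2$-spaces contained in a fixed hyperplane $\pi$. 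Only the latter carries information, reflecting exactly the asymmetry between $p \notin S$ and $S \subseteq \pi$ emphasised in Section~4. Since the indicators $p^+$ restrict to point-indicators inside $\pi$ (those with $p \notin \pi$ restricting to $0$), the slice $J_q(n-1,2)$ is a coordinate-induced subgraph, so by Lemma~\ref{lem:natural_embedding_rule} each restriction $f_\pi$ is again a Boolean degree $1$ function. I would then induct on $n$: for $n \geq 6$ the inductive hypothesis makes every $f_\pi$ trivial, and a gluing argument across hyperplanes reconstructs $f$.

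The gluing is the Grassmann analogue of Lemmas~\ref{lem:johnson-deg1-pair} and~\ref{lem:johnson-deg1-inductive}. For each hyperplane $\pi$ one has $f_\pi \in \{1^\pm, p^\pm, \sigma^\pm, (p \lor \sigma)^\pm\}$, where now $p$ is a point of $\pi$ and $\sigma$ is a hyperplane of $\pi$, i.e.\ an $(n-2)$-space. I would catalogue the mutually consistent values of $f_\pi$ and $f_{\pi'}$ for two hyperplanes meeting in an $(n-2)$-space, using witnessing lines in $\pi \cap \pi'$ to rule out incompatible pairs exactly as the witnesses $\witness{S}{\cdots}$ do in the Johnson proof, and then a third hyperplane to discard the remaining spurious combinations. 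Because the point-direction and the hyperplane-direction are genuinely different here, this produces more cases than the Johnson argument — in particular one must track whether the point part, the hyperplane part, or the join $p \lor \sigma$ is the piece that propagates — but the bookkeeping is finite and should force $f$ to be globally one of $1^\pm, p^\pm, \pi^\pm, (p \lor \pi)^\pm$.

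The genuine obstacle is the base case $n = 5$, which is the reason Conjecture~\ref{conj:k_eq_2} is stated rather than proved. Here the recursion fails: a hyperplane-slice of $J_q(5,2)$ is $J_q(4,2)$, on which non-trivial Boolean degree $1$ functions exist for every $q > 2$, so one cannot simply feed triviality of the slices into the gluing step. Instead I would attack $n=5$ directly. Lemma~\ref{lem:ev_char} together with the table entry $(P_{01}-P_{11})/v = \tfrac{q^2-1}{q^{n-1}-1}$ shows that $|f|$ is constrained to a multiple of $[n-1]_q = (q^{n-1}-1)/(q-1)$ up to a small bounded factor, which packages $f$ by an integer parameter $x$ measuring its size; the trivial examples occupy only the extreme values of $x$. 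The plan is then to combine this integrality with Metsch-type double counting of the lines of $f$ meeting a fixed line or lying in a fixed plane, so as to force $x$ into the trivial range and exclude every intermediate value. This is precisely the step carried out for $q \in \{2,3,4,5\}$ in Theorem~\ref{thm:k_eq_2_results}, where the bounded-parameter classification of Cameron--Liebler line classes in $\FF_q^4$ is completed by $q$-specific, often computational, arguments. What is missing for general $q$ is a uniform bound on $x$ valid for all prime powers: the non-trivial Cameron--Liebler line classes in $\FF_q^4$ are abundant and not themselves classified, so ruling out their appearance as slices, or excluding the intermediate values of $x$ outright, is the hard part that keeps the statement a conjecture.
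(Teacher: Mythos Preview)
The statement you are addressing is a \emph{conjecture}: the paper does not prove it, and there is no proof to compare your attempt against. You recognise this yourself and, rather than claiming a proof, you outline a strategy and correctly isolate the genuine obstruction at $n=5$. That diagnosis is accurate and matches the paper's own discussion: the known cases $q\in\{2,3,4,5\}$ are imported from Theorem~\ref{thm:k_eq_2_results}, and for general $q$ the statement remains open precisely because the $J_q(4,2)$ slices carry non-trivial Cameron--Liebler classes.

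On the strategic content: your hyperplane-slice induction $J_q(n,2)\to J_q(n-1,2)$ is the natural move for fixed $k=2$, and it is \emph{dual} to the point-quotient induction $J_q(n,k)\to J_q(n-1,k-1)$ that the paper develops in Lemmas~\ref{lem:char_by_quotient_case_pt}--\ref{lem:hyp_compl} for $k>2$. You correctly note that point-quotients are useless here since $J_q(n-1,1)$ is a complete graph. The gluing step you sketch (matching $f_\pi$ and $f_{\pi'}$ across a pencil of hyperplanes) is indeed how the cited works of Drudge and Gavrilyuk--Mogilnykh--Matkin proceed for small $q$, so your plan for the inductive step is sound in outline; what would remain is the detailed case analysis, which is longer than in the Johnson setting because the point and hyperplane parts of a trivial example are independent.

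Your assessment of the base case is also correct: the integrality from Lemma~\ref{lem:ev_char} and Metsch-type counts constrain the parameter $x$ but do not by themselves exclude the intermediate range for arbitrary $q$, and this is exactly why Conjecture~\ref{conj:k_eq_2} is stated rather than proved. There is no gap in your proposal beyond the one you explicitly flag.
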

Our proof of Theorem \ref{thm:grassmann_class} implies that if Conjecture $\ref{conj:k_eq_2}$
is true, then also the following holds.
\begin{conjecture}\label{conj:k_gen}
  A Boolean degree $1$ function on $J_q(n, k)$ is trivial if $k, n-k \geq 2$ and either (a) $n \geq 5$ or (b) $n=4$ and $q=2$.
\end{conjecture}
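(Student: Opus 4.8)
The plan is to imitate the inductive proof of Theorem~\ref{thm:johnson-deg1_intro}: reduce by restriction to subdomains, with the classification on $J_q(n,2)$ (Theorem~\ref{thm:k_eq_2_results}) playing the role of the base case. I would argue by strong induction on $n$. For $n=4$ the only admissible value is $k=2$, and $J_2(4,2)$ is covered by Theorem~\ref{thm:k_eq_2_results}(b). For $n\ge 5$ with $k=2$ or $n-k=2$, the statement is Theorem~\ref{thm:k_eq_2_results} directly, using the scheme isomorphism $J_q(n,k)\cong J_q(n,n-k)$ (which preserves degree~$1$ functions, sends points to hyperplanes, and sends trivial functions to trivial functions). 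The remaining case is the interior range $3\le k\le n-3$, which forces $n\ge 6$.

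In the interior case I would use two restriction operations, mirroring the $f_a$ of the Johnson proof. For a point $p$, restrict $f$ to the $k$-spaces through $p$; identifying such an $S$ with $S/p$ exhibits this subdomain as $J_q(n-1,k-1)$. Each coordinate $p'^+$ restricts to a constant if $p'=p$, and to the coordinate $(\langle p,p'\rangle/p)^+$ of the quotient if $p'\ne p$ (because then $p'\in S\iff\langle p,p'\rangle\subseteq S$); hence the restriction $f_p$ is Boolean of degree~$1$ on $J_q(n-1,k-1)$. Since $k-1\ge 2$, $(n-1)-(k-1)=n-k\ge 3$, and $n-1\ge 5$, the induction hypothesis makes $f_p$ trivial, i.e.\ one of $1^\pm$, $\ell^\pm$ for a line $\ell\ni p$, $\sigma^\pm$ for a hyperplane $\sigma\supseteq p$, or $(\ell\lor\sigma)^\pm$. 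Dually, for a hyperplane $\pi$ restrict $f$ to the $k$-spaces inside $\pi\cong\FF_q^{n-1}$: a coordinate $p'^+$ becomes the coordinate $p'^+$ of $J_q(n-1,k)$ if $p'\in\pi$ and becomes $0$ if $p'\notin\pi$, so $f^\pi$ is Boolean of degree~$1$ on $J_q(n-1,k)$ and, again by the induction hypothesis (here $n-1-k\ge 2$ and $n-1\ge 5$), trivial.

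The heart of the argument, exactly as in the Johnson case, is to glue this local data. Following Lemma~\ref{lem:johnson-deg1-pair}, I would first classify, for each pair of distinct points $p,p'$, which pairs of types for $(f_p,f_{p'})$ are simultaneously realisable --- using that any bounded family of incidence conditions is met by some $k$-space, since $k$-spaces have dimension $\ge 3$ and codimension $\ge 3$ in $\FF_q^n$. Propagating these pairwise constraints (together with those coming from the $f^\pi$), in the manner of Lemma~\ref{lem:johnson-deg1-inductive}, should leave exactly four possibilities: all $f_p$ equal to a common constant (so $f$ is constant); a single global point $r$ with $f_r=1^\pm$ and $f_p=\langle p,r\rangle^\pm$ for $p\ne r$ (so $f=r^\pm$); a single global hyperplane $\rho$ governing all the $f^\pi$ (so $f=\rho^\pm$); or a compatible pair $(r,\rho)$ with $r\notin\rho$ (so $f=(r\lor\rho)^\pm$). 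A short direct verification then identifies $f$ with the corresponding trivial function, completing the induction.

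I expect the gluing step to be the real obstacle, and much longer than in the Johnson setting. Two features make it so: each $f_p$ now has four possible shapes rather than essentially two, and the symmetry that in the Johnson case halves the casework --- complementing the ground set, i.e.\ exchanging $\in$ and $\notin$ --- is unavailable, since for a subspace $S$ the relations $p\notin S$ and $p\in S^\perp$ are genuinely distinct; the only symmetries one can exploit are $f\mapsto 1-f$ and the duality $J_q(n,k)\cong J_q(n,n-k)$. One must also track the interaction between the line-type restrictions $f_p$ and the hyperplane-type restrictions $f^\pi$, and check throughout that the parameters produced by restriction stay in the regime where Theorem~\ref{thm:k_eq_2_results} applies --- which is exactly what the hypothesis $3\le k\le n-3$ (hence $n-1\ge 5$) guarantees. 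Finally, the reduction itself (restrictions plus gluing) uses nothing special about $q$; only the base case does. This is why the theorem is currently stated for $q\in\{2,3,4,5\}$, and why the very same reduction yields Conjecture~\ref{conj:k_gen} from Conjecture~\ref{conj:k_eq_2}.
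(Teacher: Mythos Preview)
Your plan is correct and is essentially the paper's approach: induct, use point-restrictions $f_p$ to the quotient $J_q(n-1,k-1)$ (trivial by hypothesis), and glue. Two small differences are worth noting. First, the hyperplane restrictions $f^\pi$ are superfluous: the paper's inductive step (Theorem~\ref{thm:grassmann_inductive_step}) is the single diagonal implication $J_q(n-1,k-1)\Rightarrow J_q(n,k)$ for $k,n-k>2$, using only the $f_p$; iterating lands on $J_q(n-k+2,2)$ with $n-k+2\ge 5$, so the $f^\pi$ never enter. Second, the gluing is organized a bit differently from Lemma~\ref{lem:johnson-deg1-pair}: rather than tabulating all compatible pairs $(f_p,f_{p'})$, the paper proves three lemmas (Lemmas~\ref{lem:char_by_quotient_case_pt}--\ref{lem:char_by_quotient_case_pt_pl}) of the form ``if $g$ is a \emph{trivial} degree~$1$ function on $J_q(n,k)$ and its restriction through a fixed point has a prescribed shape, then $g$ is \ldots'', and then applies them with $g=f_b$ (trivial by induction) and the fixed point $\langle a,b\rangle/b$ to read off $f_b$ from a single known $f_a$ (Corollary~\ref{cor:antiflag_hull} through Lemma~\ref{lem:hyp_compl}). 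This is the same content as your pairwise analysis, just packaged so that one case analysis does the work once and for all.
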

We believe at least the following to be true.
\begin{conjecture}\label{conj:k_gen_weak}
  Let $q$ be a prime power. 
  Then there exists a constant $n_{q}$ such that
  a Boolean degree $1$ function on $J_q(n, k)$ is trivial for all $n \geq n_q$ if $k, n-k \geq 2$.
\end{conjecture}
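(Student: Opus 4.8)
The plan is to follow the two-stage strategy that underlies Theorem~\ref{thm:grassmann_class}: prove the statement for $J_q(n,2)$ and then lift it to all $k$. Write $P_q(k,\ell)$ for ``every Boolean degree~$1$ function on $J_q(k+\ell,k)$ is trivial'', so that the goal is $P_q(k,\ell)$ for all $k,\ell\geq 2$ with $k+\ell$ large. The lifting step is the Grassmann analogue of Lemmas~\ref{lem:johnson-deg1-pair} and~\ref{lem:johnson-deg1-inductive}: given a Boolean degree~$1$ function $f$ on $J_q(k+\ell,k)$, every point $p$ sees a copy of $J_q(k+\ell-1,k-1)$ (the $k$-spaces through $p$) and every hyperplane $\pi$ carries a copy of $J_q(k+\ell-1,k)$ (the $k$-spaces inside $\pi$); the restrictions $f_p$ and $f^\pi$ remain Boolean degree~$1$, so by induction they are trivial, and a case analysis of how these trivial restrictions are forced to agree pins down $f$. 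Together with the duality $J_q(n,k)\cong J_q(n,n-k)$, this is what reduces the whole problem to the edge $k=2$ (equivalently $n-k=2$).

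The ingredient that is missing for general $q$ is therefore a \emph{quantitative} form of Conjecture~\ref{conj:k_eq_2}: a constant $\mathfrak{L}_0(q)$ such that every Boolean degree~$1$ function on $J_q(n,2)$ is trivial once $n\geq\mathfrak{L}_0(q)$. I would try to prove this by parameter restrictions. Such an $f$ lies in $V_0\oplus V_1$, so Lemma~\ref{lem:ev_char} applies with $(P_{01}-P_{11})/v=(q^2-1)/(q^{n-1}-1)$; combining this with Metsch-type double counting of incidences between $f$ and the point- and hyperplane-pencils (in the spirit of the bounds of~\cite{Metsch2017,Rodgers2018}) one wants to show that the normalised weight of $f$ is confined either to the short list of values realised by $1$, $p^+$, $\pi^+$, $p^+\lor\pi^+$, or to an interval that is empty when $n$ is large relative to $q$. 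Given $\mathfrak{L}_0(q)<\infty$, feeding it into the lifting step (and its dual) yields $P_q(k,\ell)$ whenever $\min(k,\ell)\geq 2$ and $\max(k,\ell)\geq\mathfrak{L}_0(q)-2$; since $\max(k,n-k)\geq n/2$, this gives Conjecture~\ref{conj:k_gen_weak} with $n_q\approx 2\mathfrak{L}_0(q)$. One subtlety: for this last deduction the lifting step must be usable ``one-sidedly'' --- triviality of all point-quotients $f_p$ alone forcing $f$ trivial, exactly as in Lemma~\ref{lem:johnson-deg1-inductive} --- so that one may descend purely in the larger index; if instead one can only conclude from point-quotients \emph{and} hyperplane-sections together, the balanced range $k\sim n-k$ is not controlled by $\mathfrak{L}_0(q)$ and would need a separate argument (e.g.\ a direct parameter bound on $J_q(n,k)$ via $(P_{01}-P_{11})/v=\prod_{i=2}^{k}(q^i-1)/(q^{n-i+1}-1)$ from the table).

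The main obstacle is clearly the edge case: showing that non-trivial Boolean degree~$1$ functions on $J_q(n,2)$ disappear once $n$ is large compared to $q$. For $q\in\{2,3,4,5\}$ this is Theorem~\ref{thm:k_eq_2_results}, whose proofs (Drudge; Gavrilyuk and Mogilnykh; Gavrilyuk and Matkin) are intricate and specific to small $q$, and no uniform-in-$q$ method is known; a parameter-counting proof has to eliminate an a priori unbounded interval of weights, which is exactly the hard part. A secondary obstacle, needed only if the lifting step is genuinely two-sided, is either to establish the one-sided strengthening above or to treat the balanced case directly.
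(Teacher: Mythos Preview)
The statement you are attempting is labelled \emph{Conjecture} in the paper; the authors do not prove it and explicitly present it as open. So there is no ``paper's own proof'' to compare against, and your proposal should be read as a strategy rather than a proof.

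Your reduction architecture matches the paper exactly: Theorem~\ref{thm:grassmann_inductive_step} shows that triviality on $J_q(n-1,k-1)$ implies triviality on $J_q(n,k)$ whenever $k,n-k>2$, and together with the duality $J_q(n,k)\cong J_q(n,n-k)$ this reduces everything to the edge $k=2$. Your worry about whether the lifting is ``one-sided'' is already settled: Theorem~\ref{thm:grassmann_inductive_step} uses \emph{only} the point-quotients $f_a$ (restrictions to $k$-spaces through a point $a$), via Lemmas~\ref{lem:char_by_quotient_case_pt}--\ref{lem:hyp_compl}; hyperplane sections are not needed for the induction. Consequently, if one had triviality on $J_q(m,2)$ for all $m\geq\mathfrak{L}_0(q)$, then chaining the inductive step gives triviality on $J_q(n,k)$ whenever $n-k+2\geq\mathfrak{L}_0(q)$, and by duality whenever $k+2\geq\mathfrak{L}_0(q)$; since $\max(k,n-k)\geq n/2$, taking $n_q=2\mathfrak{L}_0(q)-4$ works. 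So your secondary obstacle is not an obstacle.

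The genuine gap is precisely the one you name: the existence of $\mathfrak{L}_0(q)$. Your suggestion to obtain it from Lemma~\ref{lem:ev_char} together with Metsch-type double counting is not a proof but a programme; the divisibility condition $(q^2-1)/(q^{n-1}-1)$ and the known modular/bound results constrain the weight of a putative non-trivial example but do not by themselves exclude the full middle range uniformly in $q$, and you acknowledge that the existing proofs of Theorem~\ref{thm:k_eq_2_results} are $q$-specific. Until that step is supplied, the proposal remains exactly at the level of the paper's own discussion: the inductive machinery (Theorem~\ref{thm:grassmann_inductive_step}) is in place, and the conjecture stands or falls with the $k=2$ base case (Conjecture~\ref{conj:k_eq_2}, or a weakening with $n\geq\mathfrak{L}_0(q)$).
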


Lemma \ref{lem:char_by_quotient_case_pt}, Lemma \ref{lem:char_by_quotient_case_pl},
and Lemma \ref{lem:char_by_quotient_case_pt_pl} are the analog of Lemma \ref{lem:johnson-deg1-pair}
up to Equation \eqref{eq:step_between_johnson_sodoku}.

\begin{lemma}\label{lem:char_by_quotient_case_pt}
  Let $n \geq 2k \geq 4$ and let $f$ be a trivial Boolean degree $1$ function on $J_q(n, k)$.
  Fix a line $\ell$ and a point $a \in \ell$. Suppose that for all $k$-spaces $K$ through $a$ the following holds:
  \begin{enumerate}[(a)]
   \item $f(K) = 1$ if $\ell \subseteq K$,
   \item $f(K) = 0$ otherwise.
  \end{enumerate}
  Then one of the following cases occurs:
  \begin{enumerate}
    \item $f = p^+$ for some point $p \in \ell \setminus \{ a \}$.
    \item $f = p^+ \lor \pi^+$ for some point $p \in \ell \setminus \{ a \}$ and some hyperplane $\pi \ni a$.
  \end{enumerate}
\end{lemma}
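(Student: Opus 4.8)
The plan is to run a finite case analysis over the possible forms of the trivial function $f$, organized around the residue of $f$ at $a$. Write $\mathrm{St}(a)$ for the set of $k$-spaces $K$ with $a \subseteq K$; quotienting by $a$ identifies functions on $\mathrm{St}(a)$ with functions on $J_q(n-1,k-1)$, and hypotheses (a)--(b) say exactly that the restriction $g$ of $f$ to $\mathrm{St}(a)$ is the point-indicator $\bar\ell^+$, where $\bar\ell = \ell/a$. Since $n \geq 2k$ and $k \geq 2$, the set $\mathrm{St}(a)$ contains $k$-spaces through $\ell$ as well as $k$-spaces not through $\ell$, so $g$ is non-constant; this already eliminates $f = 1^\pm$. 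It then remains to decide, for each of the remaining trivial shapes, whether its residue at $a$ can equal $\bar\ell^+$.

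The engine, exactly as in the Johnson case, is: to discard a candidate form for $f$, exhibit a $k$-space $K \supseteq a$ with a prescribed incidence pattern --- certain points forced in, certain points forced out, and $K$ forced inside or outside a given hyperplane --- on which the candidate's value contradicts (a) or (b). Such a $K$ exists whenever the pattern is consistent and not too large: the forced-in points must span a subspace of dimension at most $k$, after which a routine $q$-binomial count (using $n \geq 2k$) completes it to a $k$-space meeting the remaining finitely many avoidance and hyperplane constraints. Equivalently, phrased on the quotient, all that is needed is that $J_q(n-1,k-1)$ is large enough that the point-indicator $\bar\ell^+$ equals neither the complement of a point-indicator, nor a hyperplane-indicator or its complement, nor a union $\bar p^+ \vee \bar\pi^+$; each such inequality is witnessed by a single $(k-1)$-space of the quotient on which the two sides differ, and such a space exists because $n - 2 \geq k - 1$ (and, for the union, $n - 2 \geq k$).

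Granting these separations, the case analysis is short. Computing $g$ through the quotient map uses two observations: for a point $p$, the condition $p \subseteq K$ on $\mathrm{St}(a)$ is equivalent to $\langle a, p\rangle \subseteq K$, which modulo $a$ is the point-indicator $\bar p^+$ with $\bar p = \langle a, p\rangle/a$ when $p \neq a$ and is identically $1$ when $p = a$; and for a hyperplane $\pi$, the condition $K \subseteq \pi$ is impossible on $\mathrm{St}(a)$ when $a \not\subseteq \pi$ and is equivalent to a hyperplane-indicator $\bar K \subseteq \pi/a$ when $a \subseteq \pi$. Hence the restriction of $p^\pm$ to $\mathrm{St}(a)$ is $\bar p^\pm$ or a constant; that of $\pi^\pm$ is $\bar\pi^\pm$ or a constant; and that of $(p \vee \pi)^\pm$ is one of $1, 0, \bar p^\pm, \bar p^+ \vee \bar\pi^+, (\bar p^+ \vee \bar\pi^+)^-$. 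Matching each possibility against $g = \bar\ell^+$ and invoking the separations above leaves exactly two survivors: $f = p^+$ with $\bar p = \bar\ell$, i.e.\ $p \in \ell \setminus \{a\}$ (this is case 1), and $f = (p \vee \pi)^+ = p^+ \vee \pi^+$ with $p \in \ell \setminus \{a\}$ and $a \not\subseteq \pi$ --- the latter forced so that the hyperplane part of $f$ is invisible on $\mathrm{St}(a)$, and consistent with the constraint $p \notin \pi$ built into triviality (this is case 2). A one-line verification confirms that both families genuinely satisfy (a)--(b). The step I expect to be most delicate is the bookkeeping behind the separations in the second paragraph, especially the boundary case $k = 2$, where the quotient problem lives on the point set $J_q(n-1,1)$ of a projective space and one must use that this set has more than two points.
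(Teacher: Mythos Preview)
Your approach is correct and is essentially the paper's: both carry out a finite case analysis over the trivial forms of $f$, ruling each out by exhibiting a $k$-space through $a$ on which the candidate disagrees with the hypothesis; you organize this via the quotient $J_q(n-1,k-1)$ and phrase it as ``the residue at $a$ must equal $\bar\ell^{\,+}$'', while the paper works directly in $J_q(n,k)$, but the witnesses and the logic coincide, including the separate handling of $k=2$. One remark: your conclusion in case~2 that $a \notin \pi$ matches the paper's \emph{proof}, which indeed derives $a \notin \pi$; the ``$\pi \ni a$'' in the printed statement is a typo.
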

\begin{proof}
  In the following, $\pi$ is always a hyperplane.
  If $p$ and $\pi$ occur in the same argument, then $p \notin \pi$.
  We denote $\mathbb{F}_q^n$ by $V$.
  We will reuse the symbols $K$ and $K'$ in every paragraph.
  
  Clearly, $f \notin \{ 0, 1 \}$.
  
  Suppose for a contradiction that $f=\pi^+$. Let $K$ be a $k$-space that contains $\ell$.
  Then $f(K) = 1$. Hence, $\ell \subseteq \pi$. First we handle the case that $k\geq 3$. 
  Let $p$ be a point not in $\pi$ and let $L$ be a $(k-1)$-space in $\pi$ which contains $\ell$.
  Then $K' := \langle p, L \rangle$ is a $k$-space which contains $\ell$. Hence,
  $f(K') = 1$. As $f=\pi^+$, this contradicts $K' \nsubseteq \pi$. Now we handle the case that $k=2$.
  We have $\dim(\pi) = n-1 \geq 3$. Hence, there exists a point $r$ in $\pi \setminus \ell$.
  Then $f(\langle a, r \rangle) = 0$. As $f=\pi^+$, this contradicts $\langle a, r \rangle \subseteq \pi$.
  
  Suppose that $f = p^-$. If $k > 2$, then any $k$-space $K$ through $\langle \ell, p \rangle$ satisfies
  $f(K) = 1$. This contradicts $f = p^-$. If $k=2$, then, as $n \geq k+2$, there exists a $k$-space $K'$ through
  $a$ which does not contain $p$ or $\ell$. Hence, $f(K') = 0$. 
  As $f=p^-$, this contradicts $p \notin K$.

  Suppose that $f = \pi^-$. All $k$-spaces $K$ through $\ell$ satisfy $f(K) = 1$, hence $\ell \nsubseteq \pi$.
  There exist $k$-spaces $K$ on $a$ with $f(K) = 0$, hence $a \in \pi$. Let $L$ be a $(k-1)$-space through $a$ in $\pi$.
  We have $\dim(V/L) = n-k+1 \geq 3$, $\dim(\pi/L) = n-k$, and $\dim(\langle \ell, L \rangle/L) = 1$.
  Hence, there exists a point $r$ such that $\langle r, L \rangle/L \notin \pi/L$
  and $\langle r, L \rangle/L \neq \langle \ell, L \rangle/L$.
  Hence, $K := \langle r, L \rangle$ does not contain $\ell$, so $f(K) = 0$. 
  As $f = \pi^-$, but $K \nsubseteq \pi$, this is a contradiction.
  
  Suppose that $f = p^- \land \pi^-$. All $k$-spaces $K$ through $\ell$ satisfy $f(K) = 1$, hence $\ell \nsubseteq \pi$.
  If $a \notin \pi$, then we can proceed as in the case $f = p^-$.
  If $a \in \pi$, then let $L$ be a $(k-1)$-space through $a$ in $\pi$.
  As $n \geq k+2$, $\dim(V/L) = n-k+1 \geq 3$. As $p \notin \pi \supseteq L$, $\dim(\langle p, L \rangle/L) = 1$.
  Furthermore, $\dim(\pi/L) = n-k$. Hence, there exists a point $r$ such that $\langle r, L \rangle/L \notin \pi/L$,
  and $\langle r, L \rangle/L \neq \langle p, L \rangle/L, \langle \ell, L \rangle/L$.
  Then $K := \langle r, L \rangle$ is a $k$-space which does not contain $\ell$, so $f(K) = 0$.
  As $f = p^- \land \pi^-$, this contradicts $p \notin K \nsubseteq \pi$.
  
  Suppose that $f=p^+ \lor \pi^+$ with $a \in \pi$. Let $L$ be a $(k-1)$-space through $a$
  in $\pi$. As $\dim(\pi/L) = n-k \geq 2$, there exists a point $r$ such that 
  $\langle r, L \rangle/L \in \pi/L$ and $\langle r, L \rangle/L \neq \langle \ell, L \rangle/L$.
  Then $K := \langle r, L \rangle$ is a $k$-space which does not contain $\ell$, so $f(K) = 0$.
  As $f = p^+ \lor \pi^+$, this contradicts $K \subseteq \pi$. Hence, $a \notin \pi$.
  
  Hence, $f = p^+$ or $f = p^+ \lor \pi^+$ with $a \notin \pi$ are the only cases left.
  We have to show that $p \in \ell \setminus \{ a \}$. As there exist $k$-spaces $K$ and $K'$
  through $a$ with $f(K) = 1 \neq 0 = f(K')$, $p \neq a$. Suppose that $p \notin \ell$. Then there exists a
  $k$-space $K$ through $\langle a, p \rangle$ which does not contain $\ell$, so $f(K) = 0$.
  As $f = p^+$ or $f = p^+ \lor \pi^+$, this contradicts $p \in K$. Hence, $p \in \ell$.
\end{proof}

\begin{lemma}\label{lem:char_by_quotient_case_pl}
  Let $n \geq 2k \geq 4$ and let $f$ be a trivial Boolean degree $1$ function on $J_q(n, k)$.
  Fix a hyperplane $\pi$ and a point $a \in \pi$. Suppose that for all $k$-spaces $K$ through $a$ the following holds:
  \begin{enumerate}[(a)]
   \item $f(K) = 1$ if $K \subseteq \pi$,
   \item $f(K) = 0$ otherwise.
  \end{enumerate}
  Then $f = \pi^+$.
\end{lemma}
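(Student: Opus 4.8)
The plan is to follow the template of the proof of Lemma~\ref{lem:char_by_quotient_case_pt}: since $f$ is trivial, $f$ is one of $0,1,p^{\pm},{\pi'}^{\pm},(p\lor\pi')^{\pm}$ (where $\pi'$ is a hyperplane, and $p\notin\pi'$ whenever both appear), and I will rule out every possibility except $f={\pi'}^{+}$ with $\pi'=\pi$. Write $V=\FF_q^n$ and let $\mathcal{K}$ denote the family of $k$-spaces $K$ with $a\in K\subseteq\pi$. Two elementary facts drive the whole argument. First, because $\dim\pi=n-1\geq k+1$ (which holds as $n\geq 2k\geq 4$) and $k\geq 2$, we have $\bigcap_{K\in\mathcal{K}}K=\langle a\rangle$ and $\bigcup_{K\in\mathcal{K}}K=\pi$. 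Second, since $n\geq k+2$ there is a point outside $\pi$, so there exist $k$-spaces $K$ through $a$ with $K\not\subseteq\pi$; every such $K$ has $f(K)=0$ by hypothesis, whereas every $K\in\mathcal{K}$ has $f(K)=1$. In particular $f\notin\{0,1\}$.

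Before the case analysis I would record the small existence statements that recur: working in the quotient $V/\langle a\rangle$, which has dimension $n-1\geq k+1$, and counting dimensions, one finds for any point $p\neq a$ and any hyperplane $\pi'$ a $k$-space $K$ through $a$ with $p\notin K$ and $K\not\subseteq\pi'$, and more generally one can prescribe most incidences of a $k$-space through $a$ with $\pi$, $\pi'$ and $p$. These are routine; the only mild subtlety is $k=2$, where a $k$-space through $a$ is determined by one further point, so the dimension counts degenerate and a couple of instances must be checked by hand.

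With this in hand the cases are short. If $f=p^{\pm}$, then $f|_{\mathcal{K}}\equiv 1$ together with $\bigcap_{K\in\mathcal{K}}K=\langle a\rangle$ and $\bigcup_{K\in\mathcal{K}}K=\pi$ forces $p=a$ when $f=p^{+}$ and $p\notin\pi$ when $f=p^{-}$; in either case a $k$-space $K$ through $a$ with $K\not\subseteq\pi$, chosen in the second case to avoid $p$, satisfies $f(K)=1$, contradicting the hypothesis. If $f={\pi'}^{\pm}$, then for ${\pi'}^{+}$ the relations $f|_{\mathcal{K}}\equiv 1$ and $\bigcup_{K\in\mathcal{K}}K=\pi$ give $\pi\subseteq\pi'$, hence $\pi=\pi'$, which is the desired conclusion; for ${\pi'}^{-}$ one uses $\dim(\pi\cap\pi')\geq n-2\geq k$ to get a contradiction, taking $K\in\mathcal{K}$ inside $\pi\cap\pi'$ when $a\in\pi'$, and a $k$-space $K$ through $a$ with $K\not\subseteq\pi$ (so automatically $K\not\subseteq\pi'$) when $a\notin\pi'$. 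Finally, for $f=(p\lor\pi')^{\pm}$ with $p\notin\pi'$: in the case $(p\lor\pi')^{+}$, if $p=a$ then $f(K)=1$ for every $k$-space $K$ through $a$, contradicting the existence of such a $K$ with $K\not\subseteq\pi$, while if $p\neq a$ the members of $\mathcal{K}$ not containing $p$ must be contained in $\pi'$ and they span $\pi$, so $\pi\subseteq\pi'$ and hence $\pi=\pi'$, which directly contradicts $p\notin\pi'$ when $p\in\pi$ and is contradicted by a $k$-space through $a$ containing $p$ but not contained in $\pi$ when $p\notin\pi$; in the case $(p\lor\pi')^{-}=p^{-}\wedge{\pi'}^{-}$ one first obtains $p\notin\pi$ and $a\notin\pi'$ as above, and then a $k$-space through $a$ that avoids $p$ and is not contained in $\pi$ has $f$-value $1$ while the hypothesis demands $0$.

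I expect the main obstacle to be the bookkeeping in the $(p\lor\pi')^{\pm}$ cases, where three objects $p,\pi,\pi'$ interact, several subcases arise according to the mutual positions of $a,p,\pi,\pi'$, and in each one must verify that the auxiliary $k$-space through $a$ realizing the needed incidences actually exists --- the instances with $k=2$ requiring a separate check. As in Lemma~\ref{lem:char_by_quotient_case_pt}, none of this is deep, but it is where the length of the proof lives.
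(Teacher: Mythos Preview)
Your proposal is correct and follows essentially the same approach as the paper: an exhaustive case analysis over the finitely many trivial Boolean degree~$1$ functions, ruling out each possibility except $\pi^+$ by exhibiting (or arguing the existence of) a $k$-space through $a$ on which the candidate and the hypothesis disagree. Your use of the facts $\bigcap_{K\in\mathcal{K}}K=\langle a\rangle$ and $\operatorname{span}\bigl(\bigcup_{K\in\mathcal{K}}K\bigr)=\pi$ streamlines a couple of cases (notably $f=p^{+}$ and $f={\pi'}^{+}$) compared to the paper's explicit witness constructions, but the structure and content of the argument are the same.
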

\begin{proof}
  In the following, $p$ is always some point, and $\tilde{\pi}$ is some hyperplane.
  If $p$ and $\tilde{\pi}$ occur in the same argument, then $p \notin \tilde{\pi}$.
  We denote $\mathbb{F}_q^n$ by $V$.
  We will reuse the symbols $K$ and $K'$ in every paragraph.
  
  Clearly, $f \notin \{ 0, 1 \}$.
  
  Suppose that $f= p^+$ or $f = p^+ \lor \pi^+$, where $p \notin \pi$ in the latter case.
  Let $K$ be a $k$-space through $a$ and $p$. Since $p \in K$ and $f = p^+$ or $f = p^+ \lor \pi^+$, we have $f(K) = 1$, and so
  $K \subseteq \pi$. Hence, $p \in \pi$, ruling out the case $f = p^+ \lor \pi^+$.
  As $\dim(\pi) = n-1 > k$, there exists a $k$-space $K'$ 
  in $\pi$ through $a$ disjoint to $p$. As $K \subseteq \pi$,
  $f(K) = 1$. As $f=p^+$, this contradicts $p \notin K$.
  
  Suppose that $f=p^-$. If $p \in \pi$, then there exists a $k$-space
  $K$ through $a$ and $p$ in $\pi$. Hence, $f(K) = 1$, which contradicts $f=p^-$.
  So $p \notin \pi$. Let $L$ be a $(k-1)$-space through $a$ in $\pi$.
  Then $\dim(V/L) = n-k+1$, $\dim(\pi/L) = n-k$, and $\dim(\langle p, L \rangle/L)=1$.
  Hence, there exists a point $r$ such that $\langle r, L \rangle/L \notin \pi/L$
  and $\langle r, L \rangle/L \neq \langle p, L \rangle/L$. Let $K := \langle r, L \rangle$.
  As $K \nsubseteq \pi$, $f(K) = 0$. As $f=p^-$, this contradicts $p \notin K$.
  
  Suppose that $f = \tilde{\pi}^-$ for some hyperplane $\tilde{\pi}$
  or $f = p^- \land \tilde{\pi}^-$ for some hyperplane $\tilde{\pi}$
  with $p \notin \tilde{\pi}$.
  If $a \in \tilde{\pi}$, then $a \in \pi \cap \tilde{\pi}$, where 
  $\dim(\pi \cap \tilde{\pi}) \geq n-2 \geq k$. Hence, there exists a $k$-space $K$
  through $a$ in $\pi \cap \tilde{\pi}$. As $K \subseteq \pi$, then $f(K) = 1$.
  As $f = \tilde{\pi}^-$ or $f = p^- \land \tilde{\pi}^-$, this contradicts $K \subseteq \tilde{\pi}$.
  If $a \notin \tilde{\pi}$ and $f = \tilde{\pi}^-$, let $L$ be a $(k-1)$-space through $a$ in $\pi$.
  Let $r$ be a point not in $\pi$. Then $K' := \langle r, L \rangle$ is not in $\pi$.
  Hence, $f(K') = 0$. As $a \notin \tilde{\pi}$, $K' \nsubseteq \tilde{\pi}$. 
  This contradicts $f = \tilde{\pi}^-$.
  If $a \notin \tilde{\pi}$ and $f = p^- \land \tilde{\pi}^-$, then there exists,
  as $\dim(\pi) = n-1 > k$, a $(k-1)$-space $L$ through $a$ in $\pi$
  which does not contain $p$. As $\dim(V/L) = n-k+1$, $\dim(\pi/L) = n-k$,
  and $\dim(\langle p, L \rangle/L) = 1$, there exists a point $r$
  such that $\langle r, L \rangle/L \notin \pi/L$ and 
  $\langle r, L \rangle/L \neq \langle p, L \rangle/L$. Then $K' := \langle r, L \rangle$
  is not in $\pi$, so $f(K) = 0$. As $f = p^- \land \tilde{\pi}^-$,
  this contradicts $p \notin K$ and $K \nsubseteq \pi$.
  
  Suppose that $f = \tilde{\pi}^+$ with $\tilde{\pi} \neq \pi$.
  Let $K$ be a $k$-space in $\pi$ through $a$ with $K \nsubseteq \tilde{\pi}$.
  Then $f(K) = 1$. As $f = \tilde{\pi}^+$, this contradicts $K \nsubseteq \tilde{\pi}$.
  
  Suppose that $f = p^+ \lor \tilde{\pi}^+$ for some $p \notin \tilde{\pi}$.
  If $\pi \neq \tilde{\pi}$, let $L$ be a $(k-1)$-space through $a$ in $\pi \cap \tilde{\pi}$.
  As $\dim(\pi/L) = n-k\geq 2$ and $\dim((\pi \cap \tilde{\pi})/L) = n-k-1$, 
  there exists a point $r \in \pi \setminus \tilde{\pi}$ such that 
  $\langle r, L \rangle/L \neq \langle p, L \rangle/L$. 
  Hence, $K := \langle r, L \rangle \subseteq \pi$, so $f(K) = 1$.
  As $f(K) = p^+ \lor \tilde{\pi}^+$, this contradicts $p \notin K$ and
  $K \nsubseteq \tilde{\pi}$.
  
  Hence, $f = \pi^+$ is the only option left.
\end{proof}

\begin{lemma}\label{lem:char_by_quotient_case_pt_pl}
  Let $n \geq 2k \geq 4$ and let $f$ be a trivial Boolean degree $1$ function on $J_q(n, k)$.
  Fix a hyperplane $\pi$, a point $a \in \pi$, and a line $\ell \nsubseteq \pi$ through $a$. 
  Suppose that for all $k$-spaces $K$ through $a$ the following holds:
  \begin{enumerate}[(a)]
   \item $f(K) = 1$ if $K \subseteq \pi$,
   \item $f(K) = 1$ if $\ell \subseteq K$,
   \item $f(K) = 0$ otherwise.
  \end{enumerate}
  Then $f = p^+ \lor \pi^+$ for some point $p \in \ell \setminus \{ a \}$.
\end{lemma}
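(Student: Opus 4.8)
The plan is to mimic the proofs of Lemmas~\ref{lem:char_by_quotient_case_pt} and~\ref{lem:char_by_quotient_case_pl}. Since $f$ is trivial it is one of $0$, $1$, $p^\pm$, $\varpi^\pm$, $p^+ \lor \varpi^+$, or $p^- \land \varpi^- = (p \lor \varpi)^-$, where $p$ is a point and $\varpi$ a hyperplane (and $p \notin \varpi$ whenever both appear); for every form except $p^+ \lor \pi^+$ with $p \in \ell \setminus \{a\}$ I would exhibit a $k$-space $K$ through $a$ on which the hypothesis disagrees with the candidate form. Write $V = \FF_q^n$; since $n - k \geq k \geq 2$ we have $\dim \pi = n-1 \geq k+1$, and $\ell \cap \pi = \langle a \rangle$ because $a \in \ell \nsubseteq \pi$. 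First, $f \notin \{0,1\}$: there is a $k$-space through $a$ inside $\pi$ (on which $f = 1$ by~(a)) and, as $n \geq 2k$, a $k$-space through $a$ lying neither in $\pi$ nor through $\ell$ (on which $f = 0$ by~(c)). The ``positive'' forms are quick. If $f = \pi^+$, a $k$-space $K \supseteq \ell$ through $a$ has $K \nsubseteq \pi$ yet $f(K) = 1$ by~(b). If $f = \varpi^+$ with $\varpi \neq \pi$, then $\pi \cap \varpi$ is a hyperplane of $\pi$, so there is a $k$-space $K \subseteq \pi$ through $a$ with $K \nsubseteq \varpi$, and~(a) gives $f(K) = 1$. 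If $f = p^+$, then~(b) forces $p$ into every $k$-space containing $\ell$, and these intersect in $\ell$ (using $n > k$), so $p \in \ell$; now either $p = a$, contradicting the existence of a $k$-space through $a$ with $f = 0$, or $p \in \ell \setminus \{a\}$, so $p \notin \pi$, contradicting $p \in K$ for the $k$-spaces $K \subseteq \pi$ through $a$ furnished by~(a).

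The ``negative'' forms $f = p^-$, $f = \varpi^-$, and $f = p^- \land \varpi^-$ are dealt with exactly as in the two cited lemmas. The easy subcases go directly: for $f = p^-$ with $p \in \pi$ (in particular $p = a$), or for $f = \varpi^-$ or $f = p^- \land \varpi^-$ with $a \in \varpi$, a $k$-space $K$ through $a$ lying in $\pi$ --- more precisely, through $a$ and $p$ when $p \in \pi$, and inside the $(\geq n-2)$-dimensional space $\pi \cap \varpi$ when $a \in \varpi$ --- has $f(K) = 1$ by~(a) but value $0$ under the candidate form; for $f = \varpi^-$ with $a \notin \varpi$ (so $K \nsubseteq \varpi$ automatically for every $K \ni a$), the $k$-space through $a$ with $f = 0$ produced above already does the job. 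The remaining subcases require the quotient technique: pick a $(k-1)$-space $L$ through $a$ inside $\pi$ (missing $p$ when $p \notin \pi$, which is possible as $\dim(\pi/\langle a\rangle) = n-2 \geq k$), note $\dim(V/L) = n-k+1 \geq 3$, and choose a point $r$ for which $K := \langle r, L\rangle$ simultaneously avoids $\pi$, $\langle p, L\rangle$, and $\langle \ell, L\rangle$; then $K$ is a $k$-space through $a$ with $\ell \nsubseteq K$ and $K \nsubseteq \pi$, so $f(K) = 0$ by~(c), while the candidate form forces $f(K) = 1$.

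It remains to treat $f = p^+ \lor \varpi^+$ with $p \notin \varpi$. I first claim $\varpi = \pi$. Otherwise $p \neq a$ (else $f \equiv 1$ on the $k$-spaces through $a$, contradicting~(c)), and, arguing as in the previous paragraph, there is a $k$-space $K \subseteq \pi$ through $a$ with $K \nsubseteq \varpi$ and $p \notin K$; then~(a) gives $f(K) = 1$ while $p^+(K) \lor \varpi^+(K) = 0$, a contradiction. Hence $\varpi = \pi$, and so $p \notin \pi$. Finally, for every $k$-space $K \supseteq \ell$ through $a$ we have $K \nsubseteq \pi$ (as $\ell \nsubseteq \pi$), so~(b) forces $p \in K$; since these $K$ intersect in $\ell$, we get $p \in \ell$, and $p \neq a$ because $a \in \pi$ while $p \notin \pi$. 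This gives $f = p^+ \lor \pi^+$ with $p \in \ell \setminus \{a\}$, as required. I do not expect a genuine obstacle: the only real work is the bookkeeping in the quotient arguments of the second paragraph and the dimension count producing the separating $K$ in the last case, all routine once the slack $n \geq 2k$ is used, and the proof is essentially an exhaustive but conceptually light walk through the list of trivial functions.
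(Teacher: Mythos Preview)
Your proof is correct and follows essentially the same approach as the paper: an exhaustive case analysis over the list of trivial forms, producing in each wrong case a $k$-space $K$ through $a$ on which the hypothesis and the candidate disagree, using the same quotient-by-$L$ constructions. Your organization differs slightly---you group the negative forms together and, for $f = p^+ \lor \varpi^+$, you first pin down $\varpi = \pi$ and then locate $p$ on $\ell$, whereas the paper first splits on the position of $p$ relative to $\ell$ and only then treats $\tilde\pi \neq \pi$---but the underlying constructions are the same, and both orderings work.
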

\begin{proof}
  In the following, $p$ is always some point, and $\tilde{\pi}$ is some hyperplane.
  If $p$ and $\tilde{\pi}$ occur in the same argument, then $p \notin \tilde{\pi}$.
  We denote $\mathbb{F}_q^n$ by $V$.
  We will reuse the symbols $K$ and $K'$ in every paragraph.
  
  Clearly, $f \notin \{ 0, 1 \}$.
  
  Suppose that $f = p^-$. 
  If $k > 2$ or $p \in \ell$, then there exists a $k$-space $K$ through $\langle \ell, p \rangle$.
  Hence, $f(K) = 1$. As $f = p^-$, this contradicts $p \in K$.
  Hence, $k=2$ and $p \notin \ell$. Clearly, there exists a $2$-space $K$ through $a$
  with $K \nsubseteq \pi$ and $K \neq \ell$ (and so $p \notin K$). Then $f(K) = 0$. As $f=p^-$, this contradicts $p \notin K$.
  
  Suppose that $f = \tilde{\pi}^-$.
  All $k$-spaces $K$ through $\ell$ satisfy $f(K)=1$, hence $\ell \nsubseteq \tilde{\pi}$.
  There exist $k$-spaces $K'$ through $a$ with $f(K') = 0$, hence, as $f = \tilde{\pi}^-$, $a \in \tilde{\pi}$.
  As $\dim(\pi \cap \tilde{\pi}) \geq n-2 \geq k$, there exists a $(k-1)$-space $L$ through $a$
  in $\pi \cap \tilde{\pi}$. We have $\dim(V/L) = n-k+1 \geq 3$, $\dim(\pi/L) = \dim(\tilde{\pi}/L) = n-k$,
  and $\dim(\langle \ell, L \rangle/L) = 1$. Hence, there exists a point $r$ such that
  $\langle r, L \rangle/L \nsubseteq \pi/L, \tilde{\pi}/L$ and $\langle r, L \rangle/L \neq \langle p, L \rangle/L$.
  Hence, $K := \langle r, L \rangle$ satisfies $f(K) = 0$. As $f = \tilde{\pi}^-$,
  this contradicts $K \nsubseteq \tilde{\pi}$.
  
  Suppose that $f = p^- \land \tilde{\pi}^-$ for some hyperplane $\tilde{\pi}$.
  All $k$-spaces $K$ through $\ell$ satisfy $f(K) = 1$, hence $\ell \nsubseteq \tilde{\pi}$.
  If $a \notin \tilde{\pi}$, then we can proceed as in the case $f = p^-$.
  If $a \in \tilde{\pi}$, then, as $\dim(\pi \cap \tilde{\pi}) \geq n-2 \geq k$, then there exists
  a $(k-1)$-space $L$ through $a$ in $\pi \cap \tilde{\pi}$.
  As $\dim(V/L) = n-k+1 \geq 3$, $\dim(\pi/L) = \dim(\tilde{\pi}/L) = n-k$, and $\dim(\langle \ell, L \rangle/L) = \dim(\langle p, L \rangle/L) = 1$,
  there exists a point $r$ such that $\langle r, L \rangle/L \nsubseteq \pi/L, \tilde{\pi}/L$ and 
  $\langle r, L \rangle/L \neq \langle \ell, L \rangle/L, \langle p, L \rangle/L$. Hence, $K := \langle r, L \rangle$ does
  not contain $\ell$ and does not lie in $\pi$. Hence, $f(K) = 0$. As $f=p^- \land \tilde{\pi}^-$,
  this contradicts that $p \notin K \nsubseteq \tilde{\pi}$.
  
  Suppose that $f = p^+$. If $p = a$, then we can find a $k$-space $K$ through $a$ which doesn't contain $\ell$ and is not in $\pi$.
  Then $f(K) = 0$, contradicting $f = p^+$. Hence, $p \neq a$.
  As $\dim(\pi)=n-1 > k \geq 2$, there exists a $k$-space $K$
  in $\pi$ through $a$ not containing $p$. As $K \subseteq \pi$, $f(K)=1$. As $f=p^+$,
  this contradicts $p \notin K$.
  
  Suppose that $f = \tilde{\pi}^+$. As not all $k$-spaces through $\ell$ lie in $\tilde{\pi}$, there 
  exists a $k$-space $K$ that contains $\ell$ and not in $\tilde{\pi}$. Then $f(K) = 1$.
  As $f = \tilde{\pi}^+$, this contradicts $\ell \nsubseteq K$.
  
  Suppose that $f = p^+ \lor \tilde{\pi}^+$ for some $p \notin \ell$.
  Let $L$ be a $(k-1)$-space through $\langle a, p \rangle$ with $\ell \nsubseteq L$.
  As $\dim(V/L) \geq n-k+1 \geq 3$, $\dim(\pi/L) = n-k$ and $\dim(\langle p, L\rangle/L) = 1$,
  there exists a point $r$ such that $\langle r, L \rangle/L \nsubseteq \pi/L$
  and $\langle r, L \rangle/L \neq \langle p, L \rangle/L$. Then $K := \langle r, L \rangle$
  does not contain $\ell$ and does not lie in $\pi$, so $f(K) = 0$.
  As $f = p^+ \lor \tilde{\pi}^+$, this contradicts $p \in K$.
  
  Suppose that $f = p^+ \lor \tilde{\pi}^+$ for $p = a$. 
  Then $a \notin \tilde{\pi}$, so we can proceed as in the case $f = p^+$ and obtain a contradiction.
  
  Suppose that $f = p^+ \lor \tilde{\pi}^+$ for $p \in \ell \setminus \{ a \}$ and $\pi \neq \tilde{\pi}$.
  If $a \notin \tilde{\pi}$ then we can proceed as in the case $f = p^+$ and obtain a contradiction. Hence, $a \in \tilde{\pi}$.
  Let $L$ be a $(k-1)$-space in $\pi \cap \tilde{\pi}$ which contains $a$. 
  As $\dim(\pi/L) = n-k \geq 2$, $\dim((\pi \cap \tilde{\pi})/L) = n-k-1$, and $\dim(\langle p, L \rangle/L) = 1$, there exists a point
  $r \in \pi \setminus \tilde{\pi}$ such that $\langle r, L \rangle/L \neq \langle p, L \rangle/L$.
  Hence, $K := \langle r, L \rangle \subseteq \pi$, so $f(K) = 1$. As $f = p^+ \lor \tilde{\pi}^+$,
  this contradicts $p \notin K$ and $K \nsubseteq \tilde{\pi}$.
  
  The only option left is that $f = p^+ \lor \pi^+$ with $p \in \ell \setminus \{ a \}$.
\end{proof}

The next results up to Lemma \ref{lem:hyp_compl} complete the analog of Lemma \ref{lem:johnson-deg1-pair}.
%We write $f_{aS}$ as short for $f_{\langle a, S \rangle}$, which is the restriction of $f$ to all subspaces containing $\langle a,S \rangle$, a domain isomorphic to $J_q(n-\dim(S)-1,k-\dim(S)-1)$.
We write $f_a$ for the restriction of $f$ to all subspaces containing $a$, a domain isomorphic to $J_q(n-1,k-1)$.

From now on we assume the following until the end of this section:
\begin{assumption}
 \begin{enumerate}[(a)]
  \item We have $2 < k < n-2$ (and so $n \geq 6$).
  \item All Boolean degree $1$ functions on $J_q(n-1, k-1)$ are trivial.
 \end{enumerate}
\end{assumption}
We want to remind the reader that restrictions of a Boolean degree $1$ function 
$f$ to a subspace of $\FF_q^n$ or to a quotient space of $\FF_q^n$ are still
Boolean degree $1$ functions (as follows easily from
the discussion in Section~\ref{sec:coord}).

\begin{corollary}\label{cor:antiflag_hull}
  %Suppose that $\dim(S) < k-2$.
  Let $f_a = p^+ \lor \pi^+$ (if $a \notin \pi$, this is the same as $f_a = p^+$).
  Then the following holds:
  \begin{enumerate}[(a)]
   \item If $b \notin \langle a, p \rangle$ and $a,b \in \pi$, 
    then $f_b = \tilde{p}^+ \lor \pi^+$ for some $\tilde{p} \in \langle a, p \rangle$.
    \item If $b \notin \langle a, p \rangle$ and $b \notin \pi$, 
    then $f_b = \tilde{p}^+$ ($= \tilde{p}^+ \lor \pi^+$ as $b \notin \tilde{p}$) for some $\tilde{p} \in \langle a, p \rangle$.
  \end{enumerate}
\end{corollary}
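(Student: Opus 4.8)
The plan is to pin down each $f_b$ by comparing it with $f_a$ on the $k$-spaces through the line $\langle a,b\rangle$ and then invoking Lemmas~\ref{lem:char_by_quotient_case_pt}--\ref{lem:char_by_quotient_case_pt_pl}. By the standing Assumption every $f_b$ is a trivial Boolean degree~$1$ function on $J_q(n-1,k-1)$, and restrictions to subspaces and quotients stay degree~$1$. I treat the main case $a\in\pi$; if $a\notin\pi$ then $f_a$ reduces to $p^+$, a point-type restriction governed by the companion statements. Note $a\neq p$ (otherwise $f_a\equiv 1$), and $b\notin\langle a,p\rangle$ makes $\langle a,b,p\rangle$ a plane. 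We pass to $W:=V/b$ and set $\bar a=\langle a,b\rangle/b$, $\bar p=\langle b,p\rangle/b$ (distinct points), $\bar\ell=\langle\bar a,\bar p\rangle$ (a line, whose preimage is the plane $\langle a,b,p\rangle$), and $\bar\pi=\pi/b$. For every $k$-space $K\supseteq\langle a,b\rangle$ we have $f(K)=f_a(K)=1$ exactly when $p\in K$ or $K\subseteq\pi$; on the $(k-1)$-spaces $\bar K$ of $W$ through $\bar a$ this reads: $f_b(\bar K)=1$ iff $\bar\ell\subseteq\bar K$ or $\bar K\subseteq\bar\pi$.

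\emph{Case (a): $a,b\in\pi$.} Then $\bar a\in\bar\pi$ and $\bar\ell\not\subseteq\bar\pi$ (because $p\notin\pi$), so $f_b$, together with the point $\bar a$, hyperplane $\bar\pi$ and line $\bar\ell$, satisfies the hypothesis of Lemma~\ref{lem:char_by_quotient_case_pt_pl}; its dimension requirement is supplied by Assumption~(a). Hence $f_b=\tilde p^+\lor\bar\pi^+$ for some point $\tilde p\in\bar\ell\setminus\{\bar a\}$. Reading back in $V$: a point of $\bar\ell\setminus\{\bar a\}$ is a line through $b$ inside $\langle a,b,p\rangle$ other than $\langle a,b\rangle$, hence it meets $\langle a,p\rangle$ in a single point $\neq a,b$; since $\bar\pi$ pulls back to $\pi$ this gives $f_b=\tilde p^+\lor\pi^+$ with $\tilde p\in\langle a,p\rangle$, as claimed.

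\emph{Case (b): $b\notin\pi$.} Now $K\not\subseteq\pi$ for every $K\ni b$, so the condition above simplifies to: $f_b(\bar K)=1$ iff $\bar\ell\subseteq\bar K$, for $\bar K\ni\bar a$. Lemma~\ref{lem:char_by_quotient_case_pt} then gives either $f_b=\tilde p^+$ or $f_b=\tilde p^+\lor\Pi^+$, with $\tilde p\in\bar\ell\setminus\{\bar a\}$ (so $\tilde p\in\langle a,p\rangle\setminus\{a\}$ in $V$) and $\Pi$ a hyperplane with $b\in\Pi$, $a\notin\Pi$. The first alternative is the assertion (it equals $\tilde p^+\lor\pi^+$, $\Pi$ being irrelevant since $b\notin\pi$), so suppose the second. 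Then $\pi\neq\Pi$, so $\dim(\pi\cap\Pi)=n-2\geq 4>3=\dim\langle a,b,p\rangle$ (using $n\geq 6$ from Assumption~(a)), and we may choose a point $c\in(\pi\cap\Pi)\setminus\langle a,b,p\rangle$; automatically $c\neq a,b$, $c\notin\langle a,p\rangle$, and $c\notin\langle b,\tilde p\rangle$. Applying Case~(a) to $f$ with base point $a$ and second point $c$ (valid since $a,c\in\pi$ and $c\notin\langle a,p\rangle$) yields $f_c=q_1^+\lor\pi^+$; applying Case~(a) to $f$ with base point $b$ and second point $c$ (valid since $f_b=\tilde p^+\lor\Pi^+$ with $b\in\Pi$, and $b,c\in\Pi$, $c\notin\langle b,\tilde p\rangle$) yields $f_c=q_2^+\lor\Pi^+$. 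Since the representation of a trivial function as an antiflag is unique, $\pi=\Pi$, contradicting $a\in\pi\setminus\Pi$. Therefore $f_b=\tilde p^+$.

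\emph{Main obstacle.} Most of the effort is bookkeeping: tracking how points, lines, and hyperplanes of $V$, $V/b$, and $V/\langle a,b\rangle$ correspond, and checking that the dimension hypotheses of Lemmas~\ref{lem:char_by_quotient_case_pt}--\ref{lem:char_by_quotient_case_pt_pl} survive the passage to $J_q(n-1,k-1)$ (the $n\geq 2k$ there is only used through $n\geq k+3$, which Assumption~(a) provides). The one genuinely non-mechanical step is eliminating the spurious alternative $f_b=\tilde p^+\lor\Pi^+$ in Case~(b): on the spaces through $\langle a,b\rangle$ it is indistinguishable from the true answer, so one must leave that subdomain and bring in the auxiliary point $c$, whose restriction $f_c$ then becomes over-determined.
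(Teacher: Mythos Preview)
Your proof is correct and follows essentially the same route as the paper: pass to the quotient $V/b$, apply Lemma~\ref{lem:char_by_quotient_case_pt_pl} for part~(a) and Lemma~\ref{lem:char_by_quotient_case_pt} for part~(b), and then in~(b) eliminate the spurious alternative $f_b=\tilde p^+\lor\Pi^+$ by choosing an auxiliary $c\in\pi\cap\Pi\setminus\langle a,b,p\rangle$ and deriving the contradiction $\pi=\Pi$ from two applications of part~(a). Your explicit check that the dimension hypotheses of the lemmas survive the passage to $J_q(n-1,k-1)$ (noting that only $n\geq k+2$ is actually used in their proofs, not the nominal $n\geq 2k$) is a point the paper passes over in silence, and is a genuine improvement in rigor.
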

\begin{proof}
  To see (a), apply Lemma \ref{lem:char_by_quotient_case_pt_pl} in the quotient of $b$
  with $\langle a, b \rangle/b$ for $a$, $\langle a, p, b\rangle/b$ for $\ell$,
  and $\pi/b$ for $\pi$. % Hence, $f_{bS} = \tilde{p}^+$.
  
  To see (b), apply Lemma \ref{lem:char_by_quotient_case_pt} in the quotient of $b$
  with $\langle a, b \rangle/b$ for $a$ and $\langle a, p, b\rangle/b$ for $\ell$.
  Hence, either $f_b = \tilde{p}^+$ or $f_b = \tilde{p}^+ \lor \tilde{\pi}^+$.
  Suppose that $f_b = \tilde{p}^+ \lor \tilde{\pi}^+$. Then $\pi \cap \tilde{\pi}$
  has dimension at least $n-2 > k$. Hence, we find a $c \in \pi \cap \tilde{\pi}$ such that $c \notin \langle a, b, p \rangle$.
  By (a), $f_c = \tilde{p}^+ \lor \pi^+$ for some $\tilde{p} \in \langle a, p \rangle$ and
  $f_c = \bar{p}^+ \lor \tilde{\pi}^+$ for some $\bar{p} \in \langle b, p \rangle$. This contradicts $\pi \neq \tilde{\pi}$.
  Hence, $f_b = \tilde{p}^+$.
\end{proof}

\begin{lemma}\label{lem:antiflag_compl}
  %Suppose that $\dim(S) < k-2$.
  If $f_a = p^+ \lor \pi^+$ with $a \in \pi$, then $f = \tilde{p}^+ \lor \pi^+$
  for some $\tilde{p} \in \langle a, p \rangle \setminus \{ a \}$.
\end{lemma}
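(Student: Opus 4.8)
The plan is to mimic the passage from Lemma~\ref{lem:johnson-deg1-pair} to Lemma~\ref{lem:johnson-deg1-inductive}: determine the point‑restriction $f_b$ for every point $b$ outside the line $m := \langle a,p\rangle$, show that all of these restrictions are governed by the \emph{same} point of $m$, and then read off $f$. Write $V = \FF_q^n$. Since $f_a = p^+\lor\pi^+$ is a non‑degenerate trivial function with $a\in\pi$, we are in the antiflag situation $p\notin\pi$, so $m\cap\pi = \langle a\rangle$ and every point of $m\setminus\{a\}$ lies outside $\pi$. For any point $b\notin m$, Corollary~\ref{cor:antiflag_hull} applies — case (a) when $b\in\pi$, case (b) when $b\notin\pi$, both using $a\in\pi$ — and yields $f_b = \tilde p_b^+\lor\pi^+$ for some point $\tilde p_b\in m$ (with the $\pi^+$ term vacuous on the domain of $f_b$ when $b\notin\pi$).

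First I would rule out $\tilde p_b = a$. Fix a $k$-space $K$ through $a$ and $b$ with $p\notin K$ and $K\nsubseteq\pi$; such a $K$ exists by the elementary dimension counts used repeatedly in Lemmas~\ref{lem:char_by_quotient_case_pt}--\ref{lem:char_by_quotient_case_pt_pl} (here one uses $k\geq 3$ and $n\geq 6$). Then $f(K) = f_a(K) = 0$, while $f(K) = f_b(K) = 1_{\tilde p_b\in K}$ since $K\nsubseteq\pi$; hence $\tilde p_b\notin K$, and as $a\in K$ this gives $\tilde p_b\neq a$. Thus $\tilde p_b\in m\setminus\{a\}$, and in particular $\tilde p_b\notin\pi$.

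The core step is: \emph{if $b,b'$ are points outside $m$ with $b'\notin\langle m,b\rangle$, then $\tilde p_{b'} = \tilde p_b$.} Under this hypothesis $\dim\langle m,b,b'\rangle = 4$, hence $a\notin\langle b,b',\tilde p_b\rangle$ — otherwise $\langle b,b',\tilde p_b\rangle\supseteq\langle a,\tilde p_b\rangle = m$, forcing $\langle b,b',\tilde p_b\rangle = \langle m,b,b'\rangle$, which has dimension $4 > 3\geq\dim\langle b,b',\tilde p_b\rangle$. Since $\tilde p_b\notin\pi$, the space $\langle b,b',\tilde p_b\rangle$, of dimension $\leq 3\leq k$, extends to a $k$-space $K$ with $a\notin K$ and $K\nsubseteq\pi$. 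For this $K$ we get $1_{\tilde p_{b'}\in K} = f_{b'}(K) = f(K) = f_b(K) = 1_{\tilde p_b\in K} = 1$; but $a\notin K$ forces $m\nsubseteq K$, so $K\cap m = \langle\tilde p_b\rangle$, and therefore $\tilde p_{b'} = \tilde p_b$. Now any two points $b,b'$ outside $m$ admit a common ``good partner'': a point $b''$ lying outside both planes $\langle m,b\rangle$ and $\langle m,b'\rangle$ exists since those two planes contain at most $2(q^2+q+1)$ points, fewer than $|V| = (q^n-1)/(q-1)$ because $n\geq 6$; applying the core step to $(b,b'')$ and to $(b',b'')$ gives $\tilde p_b = \tilde p_{b''} = \tilde p_{b'}$. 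Hence there is a single point $\tilde p\in m\setminus\{a\} = \langle a,p\rangle\setminus\{a\}$ with $f_b = \tilde p^+\lor\pi^+$ for all $b\notin m$.

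Finally I would reconstruct $f$: every $k$-space $K$ satisfies $K\nsubseteq m$ (as $k\geq 2$), so it contains a point $c\notin m$, and $f(K) = f_c(K)$. If $K\subseteq\pi$ then $c\in\pi$ and $f_c(K) = 1 = (\tilde p^+\lor\pi^+)(K)$; if $K\nsubseteq\pi$ then $f_c(K) = 1_{\tilde p\in K} = (\tilde p^+\lor\pi^+)(K)$. Hence $f = \tilde p^+\lor\pi^+$ with $\tilde p\in\langle a,p\rangle\setminus\{a\}$, as claimed. I do not expect a genuine obstacle here: given Corollary~\ref{cor:antiflag_hull}, this is a gluing argument in the spirit of Lemma~\ref{lem:johnson-deg1-inductive}. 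The one subtlety is that the Corollary locates $\tilde p_b$ only on the line $\langle a,p\rangle$ rather than at a single point, so pairs $b,b'$ with $a\in\langle b,b'\rangle$ are ``bad'' and can only be compared through an auxiliary point; and the routine part is checking that the various $k$-spaces demanded by the construction really exist, which is of the same flavour as the computations in Lemmas~\ref{lem:char_by_quotient_case_pt}--\ref{lem:char_by_quotient_case_pt_pl}.
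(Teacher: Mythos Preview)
Your argument is correct and complete (the one slip is writing ``$k\geq 2$'' when you mean $k>2$ in the final paragraph, but that is the standing assumption anyway).

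Your route differs from the paper's in the key ``consistency'' step. The paper never constructs a witnessing $k$-space; instead it \emph{re-applies} Corollary~\ref{cor:antiflag_hull} from the base point $b$ to a third point $c\notin\langle a,b,p\rangle$. This places $\tilde p_c$ simultaneously on the two distinct lines $\langle a,p\rangle$ and $\langle b,\tilde p_b\rangle$, hence at their unique intersection point; varying $b$ then shows this intersection point is fixed. In other words, the paper lets the Corollary do the work twice and reads off $\tilde p$ as a line intersection, whereas you first exclude $\tilde p_b=a$ by hand and then compare $\tilde p_b$ with $\tilde p_{b'}$ directly via a carefully built $k$-space through $b,b',\tilde p_b$ avoiding $a$. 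The paper's argument is a couple of lines shorter and avoids the explicit existence checks for $K$; your version is more self-contained (it does not rely on the Corollary holding from an arbitrary base point) and makes the exclusion $\tilde p_b\neq a$ explicit, something the paper leaves implicit.
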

\begin{proof}
  Corollary \ref{cor:antiflag_hull} shows that all $b \notin \langle a, p \rangle$
  satisfy $f_b = \tilde{p}_b^+ \lor \pi^+$ for some $\tilde{p}_b \in \langle a, p \rangle$.
  First we show that we can choose $\tilde{p}_b$ independently of $b$.
  Fix $b$. As $n > 3$, we find a $c \notin \langle a, b, p \rangle$.
  Hence, $f_{c} = p_1^+ \lor \pi^+$ for some $p_1 \in \langle a, p \rangle$ and
  $f_{c} = p_2^+ \lor \pi^+$ for some $p_2 \in \langle b, \tilde{p}_b \rangle$.
  Hence, 
  $\langle p_1 \rangle = \langle p_2 \rangle = \langle a, p \rangle \cap \langle b, \tilde{p}_b \rangle$.
  Here we used that $b \notin \langle a, p \rangle$ implies $\langle a, p \rangle \neq \langle b, \tilde{p}_b \rangle$.
  Hence, we can write $f_a = f_b = f_c = \tilde{p}^+ \lor \pi^+$ if we choose $\tilde{p} \in \langle a, p \rangle \cap \langle b, \tilde{p}_b \rangle$.
  Hence, $\tilde{p}$ is independent of our choice of $b$ and $c$.
  Hence, $f_b = \tilde{p}^+ \lor \pi^+$ for all $b \notin \langle \tilde{p} \rangle$.
  
  It remains to show that $f_{\tilde{p}} = 1$.
  All $b \neq \tilde{p}$ satisfy $f_b = \tilde{p}^+ \lor \pi^+$,
  so $f(K) = 1$ whenever $K$ contains $b$. Since $k > 2$, all $k$-spaces through $\tilde{p}$
  contain such a $b$, and so $f_{\tilde{p}} = 1$ follows.
  
  Hence, $f = \tilde{p}^+ \lor \pi^+$.
\end{proof}

\begin{corollary}\label{cor:point_hull}
  %Suppose that $\dim(S) < k-2$.
  Let $f_a = p^+$ and suppose that $f_b \neq \tilde{p}^+ \lor \tilde{\pi}^+$ 
  for any point $\tilde{p}$ and hyperplane $\tilde{\pi}$ with $b \in \tilde{\pi}$.
  If $b \notin \langle a, p \rangle$, then $f_b = \tilde{p}^+$ for some $\tilde{p} \in \langle a, p \rangle$.
\end{corollary}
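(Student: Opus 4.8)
The plan is to mimic the proof of Corollary~\ref{cor:antiflag_hull}(b): pass to the quotient by $b$, apply Lemma~\ref{lem:char_by_quotient_case_pt} there, and use the hypothesis of the present statement to discard the ``point $\vee$ hyperplane'' alternative that Lemma~\ref{lem:char_by_quotient_case_pt} would otherwise leave open.

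First I would record what $f_a = p^+$ says near $b$. Write $V = \FF_q^n$. Since $b \notin \langle a,p\rangle$ the point $p$ is not on the line $\langle a,b\rangle$, so $W := \langle a,b,p\rangle$ is a plane, and for every $k$-space $K$ containing $\langle a,b\rangle$ we have $f(K) = f_a(K) = 1_{p \in K} = 1_{W \subseteq K}$. Now pass to $V/b$: the function $f_b$ is a Boolean degree $1$ function on $J_q(n-1,k-1)$, hence trivial by the Assumption. Put $\hat a := \langle a,b\rangle/b$, a point of $V/b$, and $\hat\ell := W/b$, a line of $V/b$ through $\hat a$. The observation above says exactly that $f_b(\hat K) = 1$ iff $\hat\ell \subseteq \hat K$ for every $(k-1)$-space $\hat K$ of $V/b$ through $\hat a$; this is the hypothesis of Lemma~\ref{lem:char_by_quotient_case_pt}, so $f_b$ equals either $\hat p^+$ for some point $\hat p \in \hat\ell \setminus \{\hat a\}$, or $\hat p^+ \vee \hat\pi^+$ for such a $\hat p$ and a hyperplane $\hat\pi$ of $V/b$ through $\hat a$.

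Next I would eliminate the second alternative and translate the first one back to $V$. A hyperplane of $V/b$ through $\hat a$ pulls back to a hyperplane $\tilde\pi$ of $V$ containing $b$, so the second alternative would give $f_b = \tilde p^+ \vee \tilde\pi^+$ with $b \in \tilde\pi$ --- precisely the shape forbidden by hypothesis; hence $f_b = \hat p^+$ with $\hat p \in \hat\ell \setminus \{\hat a\}$. Here $\hat p$ is a line $m$ of $V$ through $b$ lying in the plane $W$ and distinct from $\langle a,b\rangle$; since two lines of the plane $W$ meet, $m$ meets $\langle a,p\rangle$ in a point $\tilde p$, and $\tilde p \ne b$ because $b \notin \langle a,p\rangle$, so $m = \langle b,\tilde p\rangle$. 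Then, as functions on $k$-spaces through $b$, $f_b = \hat p^+ = \tilde p^+$ with $\tilde p \in \langle a,p\rangle$, which is the claim.

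There is no serious obstacle --- this is genuinely a corollary of Lemma~\ref{lem:char_by_quotient_case_pt}. The points to be careful about are: verifying that $f_a = p^+$ really produces the hypothesis of Lemma~\ref{lem:char_by_quotient_case_pt} in $V/b$; noticing that the ``point $\vee$ hyperplane'' output of that lemma is literally the case excluded by our hypothesis; and keeping the $V$--versus--$V/b$ dictionary straight, in particular that a point of $V/b$ on $\hat\ell$ corresponds to a line inside the plane $\langle a,b,p\rangle$, so that intersecting with the line $\langle a,p\rangle$ pins $\tilde p$ down to that line rather than merely to the plane.
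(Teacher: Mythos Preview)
Your proof is correct and follows the same route as the paper: pass to the quotient $V/b$, apply Lemma~\ref{lem:char_by_quotient_case_pt} with $\hat a = \langle a,b\rangle/b$ and $\hat\ell = \langle a,p,b\rangle/b$, and then discard the $\hat p^+ \vee \hat\pi^+$ alternative using the hypothesis that $f_b$ is not of the form $\tilde p^+ \vee \tilde\pi^+$ with $b \in \tilde\pi$. The paper's proof consists of exactly these two sentences; you have simply spelled out the $V$--$V/b$ dictionary (in particular, why the resulting point $\hat p \in \hat\ell$ corresponds to a point $\tilde p$ on the line $\langle a,p\rangle$) more carefully than the paper does.
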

\begin{proof}
  Apply Lemma \ref{lem:char_by_quotient_case_pt} in the quotient of $b$
  with $\langle a, b \rangle/b$ for $a$ and $\langle a, p, b \rangle/b$ for $\ell$.
  Hence, $f_b = \tilde{p}^+$, as we are not allowing that $f_b = \tilde{p}^+ \lor \tilde{\pi}^+$ for some hyperplane $\tilde{\pi}$.
\end{proof}

\begin{lemma}\label{lem:point_compl}
  %Suppose that $\dim(S) < k-2$. 
  Suppose that there exists no $b$ with $f_b = p^+ \lor \pi^+$ for some point $p$ and hyperplane $\pi$ with $b \in \pi$.
  If $f_a = p^+$, then $f = \tilde{p}^+$
  for some $\tilde{p} \in \langle a, p \rangle \setminus \{ a \}$.
\end{lemma}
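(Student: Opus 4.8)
The plan is to follow the structure of the proof of Lemma~\ref{lem:antiflag_compl}, replacing Corollary~\ref{cor:antiflag_hull} by Corollary~\ref{cor:point_hull}. Write $V=\FF_q^n$ and $\ell:=\langle a,p\rangle$; note that $p\neq a$ (otherwise $f_a\equiv 1$), so $\ell$ is a line. The hypothesis of the lemma says that no restriction $f_b$ equals $\tilde p^+\lor\tilde\pi^+$ with $b\in\tilde\pi$, so together with $f_a=p^+$ the hypothesis of Corollary~\ref{cor:point_hull} is met for every point $b\notin\ell$; applying it (with $a$ as the base point) yields $f_b=\tilde p_b^+$ for some point $\tilde p_b\in\ell$.

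The core step is to prove that $\tilde p_b$ does not depend on $b$. I would first treat two points $b,b'\notin\ell$ with $\langle b,b'\rangle\cap\ell=\emptyset$: assuming $\tilde p_b\neq\tilde p_{b'}$, the space $\langle b,b',\tilde p_{b'}\rangle$ is $3$-dimensional and cannot contain $\tilde p_b$ (else $\ell$ would lie inside it, forcing the $4$-dimensional space $\langle b,b',\ell\rangle$ to collapse to dimension $3$), so since $2<k<n$ one can pick a $k$-space $K\supseteq\langle b,b',\tilde p_{b'}\rangle$ avoiding $\tilde p_b$; evaluating $f(K)$ through $b$ gives $0$ while evaluating through $b'$ gives $1$, a contradiction. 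For arbitrary $b,b'\notin\ell$, I would route through an auxiliary point $b''\notin\langle b,b',\ell\rangle$, which exists because $\dim\langle b,b',\ell\rangle\leq 4<n$; then $\langle b,b''\rangle$ and $\langle b',b''\rangle$ both miss $\ell$, so the previous case gives $\tilde p_b=\tilde p_{b''}=\tilde p_{b'}$. Call the common point $\tilde p\in\ell$.

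Finally, since $k>2$, every $k$-space $K$ contains a point $b\notin\ell$, so $f(K)=f_b(K)=1_{\tilde p\in K}$ and hence $f=\tilde p^+$. Because $f_a=p^+$ is not the constant function $1$ (there is a $k$-space through $a$ not containing $p$, as $k<n$), we must have $\tilde p\neq a$, so $\tilde p\in\langle a,p\rangle\setminus\{a\}$, as claimed (note that $\tilde p$ need not be $p$, since $\tilde p^+$ and $p^+$ agree on all $k$-spaces through $a$ once $\tilde p\in\langle a,p\rangle\setminus\{a\}$). I expect the only point needing real care to be the independence step, and within it the dimension counts producing the separating $k$-space $K$ and the auxiliary point $b''$; these are where the standing assumptions on $n$ and $k$ enter, and the remaining manipulations are routine and parallel to Lemmas~\ref{lem:char_by_quotient_case_pt}--\ref{lem:antiflag_compl}.
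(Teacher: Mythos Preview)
Your proposal is correct and shares the paper's overall skeleton: invoke Corollary~\ref{cor:point_hull} to get $f_b=\tilde p_b^{+}$ for every $b\notin\ell$, show the $\tilde p_b$ are all equal, and conclude $f=\tilde p^{+}$ with $\tilde p\neq a$ using $k>2$. The paper's own proof is terse and simply points back to Lemma~\ref{lem:antiflag_compl}.

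Where you genuinely diverge is the independence step. The paper (through the proof of Lemma~\ref{lem:antiflag_compl}) keeps $a$ as a fixed pivot: for any $b$ it picks $c\notin\langle a,b,p\rangle$ and applies Corollary~\ref{cor:point_hull} twice, once with base point $a$ and once with base point $b$, so that $f_c$ is simultaneously $p_1^{+}$ with $p_1\in\ell$ and $p_2^{+}$ with $p_2\in\langle b,\tilde p_b\rangle$; since $c\notin\langle a,b,p\rangle$ these force $p_1=p_2=\tilde p_b$, and one common $c$ for two choices $b,b'$ gives $\tilde p_b=\tilde p_{b'}$ in one stroke. Your argument instead compares $f_b$ and $f_{b'}$ directly by building a witnessing $k$-space, which obliges you to split into the case $\langle b,b'\rangle\cap\ell=0$ and then route the general case through an auxiliary $b''$. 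Both arguments are valid; the paper's pivot-through-$a$ trick is slightly slicker because it avoids the case distinction, while yours is more hands-on and does not re-invoke the Corollary. Your dimension counts (needing $k\geq 3$ to host the $3$-space $\langle b,b',\tilde p_{b'}\rangle$, $k<n$ to avoid $\tilde p_b$, and $n\geq 6>4$ to find $b''$) are all covered by the standing assumption $2<k<n-2$.
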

\begin{proof}
  Corollary \ref{cor:point_hull} shows that all $b \notin \langle a, p \rangle$
  satisfy $f_b = \tilde{p}_b^+$ for some $\tilde{p}_b \in \langle a, p \rangle$.
  The remaining steps to see $f = \tilde{p}^+$ are identical to the proof of Lemma \ref{lem:antiflag_compl}.
%   First we show that we can choose $\tilde{p}_b$ independently of $b$.
%   Fix $b$. As $n-\dim(S) > 3$, we find a $c \notin \langle a, b, p, S \rangle$.
%   Hence, $f_{cS} = p_1^+$ for some $p_1 \in \langle a, p, S \rangle$ and
%   $f_{cS} = p_2^+$ for some $p_2 \in \langle b, \tilde{p}_b, S \rangle$.
%   Hence, 
%   $\langle p_1, S \rangle/S = \langle p_2, S \rangle/S = (\langle a, p, S \rangle \cap \langle b, \tilde{p}_b, S \rangle)/S$.
%   Here we used that $c \notin \langle a, b, p, S \rangle$ implies $\langle a, p, S \rangle \neq \langle b, \tilde{p}_b, S \rangle$.
%   Hence, we can write $f_{aS} = f_{bS} = f_{cS} = \tilde{p}^+$ if we choose $\tilde{p} \in \langle a, p, S \rangle \cap \langle b, \tilde{p}_b, S \rangle$.
%   Hence, $\tilde{p}$ is independent of our choice of $b$ and $c$.
%   Hence, $f_{bS} = \tilde{p}^+$ for all $b \notin \langle \tilde{p}, S \rangle$.
%   
%   It remains to show that all $d \in \langle \tilde{p}, S \rangle$ satisfy $f_{dS} = 1$.
%   All $b \notin \langle \tilde{p}, S \rangle$ satisfy $f_{bS} = \tilde{p}^+$,
%   so $f(bdS) = 1$. As $\dim(S) < k-2$, all $k$-spaces through $\langle \tilde{p}, S \rangle$
%   contain such a $b$ and then $f_{dS} = 1$ follows.
%   
%   Hence, $f_S = \tilde{p}^+$.
\end{proof}

\begin{corollary}\label{cor:hyp_hull}
  %Suppose that $\dim(S) < k-2$. 
  Suppose that there exists no $b$ with $f_b = p^+ \lor \pi^+$ for some point $p$ and hyperplane $\pi$ with $p \neq b$ and $b \in \pi$.
  If $f_a = \pi^+$ for a hyperplane $\pi$ containing $a$, then $f_b = \pi^+$ for all points $b \in \pi$.
\end{corollary}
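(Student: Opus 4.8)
The plan is to fix a point $b \in \pi$, assume $b \ne a$ (the case $b = a$ is immediate, since then $f_b = f_a = \pi^+$), and determine $f_b$ by passing to the quotient $V/b$, where $V := \FF_q^n$. By the remark that restrictions of Boolean degree $1$ functions remain Boolean degree $1$, together with part (b) of the standing Assumption, $f_b$ is a \emph{trivial} Boolean degree $1$ function on $J_q(n-1,k-1)$, so it is one of the functions $1^\pm, p^\pm, \pi^\pm, (p\lor\pi)^\pm$ expressed in the points and hyperplanes of $V/b$.

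The crucial point I would exploit is that restricting further, from ``$k$-spaces through $a$'' to ``$k$-spaces through $\langle a,b\rangle$'', lands us exactly in the setting of Lemma~\ref{lem:char_by_quotient_case_pl}. Indeed, since $a,b \in \pi$ the line $\langle a,b\rangle$ lies in $\pi$, and every $k$-space $K$ containing $\langle a,b\rangle$ contains $a$, so $f_a = \pi^+$ gives $f(K) = 1$ if $K \subseteq \pi$ and $f(K) = 0$ otherwise. Now set $\bar a := \langle a,b\rangle/b$, a point of $V/b$, and $\bar\pi := \pi/b$, a hyperplane of $V/b$ with $\bar a \in \bar\pi$. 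The $(k-1)$-spaces of $V/b$ through $\bar a$ are precisely the images $K/b$ of the $k$-spaces $K \supseteq \langle a,b\rangle$, and for these $f_b(K/b) = f(K)$ while $K/b \subseteq \bar\pi \iff K \subseteq \pi$. Hence $(f_b,\bar\pi,\bar a)$ satisfies the hypotheses of Lemma~\ref{lem:char_by_quotient_case_pl} inside $V/b$, and that lemma delivers $f_b = \bar\pi^+$, that is, $f_b = \pi^+$.

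The only place that needs a word of justification is the arithmetic hypothesis $n \ge 2k \ge 4$ of Lemma~\ref{lem:char_by_quotient_case_pl}: in the quotient it would read $n-1 \ge 2(k-1) \ge 4$, and the first inequality can fail when $k$ is large. I would handle this by observing that the proof of Lemma~\ref{lem:char_by_quotient_case_pl} only ever uses that the ambient dimension exceeds the subspace dimension by at least $2$ (to find the auxiliary spaces $L$, $K'$, and the point $r$ outside $\pi \cap \tilde\pi$); here this reads $n-1 \ge (k-1)+2$, which holds since the standing Assumption gives $n \ge k+3$. (Alternatively one simply re-runs the short case analysis of Lemma~\ref{lem:char_by_quotient_case_pl} for $f_b$ directly.) I expect no genuine obstacle: as in Corollary~\ref{cor:antiflag_hull}, the extra hypothesis of the corollary --- that no restriction $f_b$ is a proper join $p^+ \lor \pi^+$ with $p \ne b$ and $b \in \pi$ --- is not even needed for this step, and is carried along only because it is the governing hypothesis for this block of results. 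The entire content is the recognition that restricting to $\langle a,b\rangle$ reduces us to Lemma~\ref{lem:char_by_quotient_case_pl}; everything else is bookkeeping about dimensions in the quotient.
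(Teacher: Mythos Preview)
Your approach is exactly the paper's: the one-line proof there reads ``Apply Lemma~\ref{lem:char_by_quotient_case_pl} in the quotient of $b$ with $\langle a, b \rangle/b$ for $a$ and $\pi/b$ for $\pi$.'' Your additional remarks---that the numerical hypothesis $n\ge 2k$ of Lemma~\ref{lem:char_by_quotient_case_pl} is not literally satisfied in the quotient but that only $n-k\ge 2$ is actually used, and that the no-join hypothesis of the corollary is not invoked in this step---are correct observations that the paper simply leaves implicit.
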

\begin{proof}
  Apply Lemma \ref{lem:char_by_quotient_case_pl} in the quotient of $b$
  with $\langle a, b \rangle/b$ for $a$ and $\pi/b$ for $\pi$.
  %Hence, $f_{bS} = \pi^+$. 
\end{proof}

\begin{lemma}\label{lem:hyp_compl}
  %Suppose that $\dim(S) < k-2$. 
  Suppose that there exists no $b$ with $f_b = p^+ \lor \pi^+$ for some point $p$ and hyperplane $\pi$ with $p \neq b$ and $b \in \pi$.
  If $f_a = \pi^+$ for a hyperplane $\pi$ containing $a$, then $f = \pi^+$.
\end{lemma}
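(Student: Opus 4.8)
The plan is to mimic the closing step of the proofs of Lemma~\ref{lem:antiflag_compl} and Lemma~\ref{lem:point_compl}, but here the argument is shorter because a hyperplane meets every $k$-space. First I would invoke Corollary~\ref{cor:hyp_hull}, whose hypothesis is exactly the standing assumption of the present lemma (no $b$ with $f_b = p^+ \lor \pi^+$ for some point $p$ and hyperplane $\pi$ with $p \neq b$ and $b \in \pi$): from $f_a = \pi^+$ it follows that $f_b = \pi^+$ for \emph{every} point $b \in \pi$. This already determines $f$ on a large family of $k$-spaces.

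Next I would use the elementary incidence fact that, since $\pi$ is a hyperplane and $k > 2$, any $k$-space $K$ satisfies $\dim(K \cap \pi) \geq \dim K + \dim \pi - n = k - 1 \geq 1$, so $K$ contains at least one point $b \in \pi$. For such a $b$ we have $f(K) = f_b(K) = \pi^+(K) = 1_{K \subseteq \pi}$. As this identity holds for every $k$-space $K$ of $\FF_q^n$, we conclude $f = \pi^+$, which is the claim.

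I do not expect a genuine obstacle: the lemma is essentially a one-line consequence of Corollary~\ref{cor:hyp_hull} together with the observation that every $k$-space contains a point of any fixed hyperplane. The only point requiring a little care is to check that the no-$b$ hypothesis of the lemma is precisely the one needed to apply Corollary~\ref{cor:hyp_hull} (it is), and that the standing Assumption guarantees $k > 2$ so the intersection $K \cap \pi$ is non-trivial. Any real difficulty was already absorbed into the quotient argument of Lemma~\ref{lem:char_by_quotient_case_pl} underlying Corollary~\ref{cor:hyp_hull}.
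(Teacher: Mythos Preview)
Your argument is correct and in fact more streamlined than the paper's. Both proofs begin identically by invoking Corollary~\ref{cor:hyp_hull} to obtain $f_b = \pi^+$ for every $b \in \pi$. From there you finish in one stroke: since $\dim(K \cap \pi) \geq k-1 \geq 1$ for any $k$-space $K$, each $K$ contains some $b \in \pi$, whence $f(K) = f_b(K) = \pi^+(K)$. The paper instead treats points $c \notin \pi$ separately: it exhibits a $k$-space through $c$ meeting $\pi$ to conclude $f_c \neq 1$, and then appeals to Lemma~\ref{lem:antiflag_compl}, Lemma~\ref{lem:point_compl}, and their complements to rule out every trivial form for $f_c$ other than $0$. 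Your route avoids this case analysis entirely and does not need those earlier lemmas at this step; the paper's route, while longer, makes the value of $f_c$ for $c \notin \pi$ explicit, which fits the pattern of the surrounding lemmas. A minor remark: you only need $k \geq 2$ (not $k > 2$) for $K \cap \pi$ to contain a point, though the standing Assumption does give $k > 2$.
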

\begin{proof}
  By Corollary \ref{cor:hyp_hull}, it is clear that all $b \in \pi$ satisfy $f_b = \pi^+$.
  Let $c$ be a point not in $\pi$. Since $\dim(\pi) = n-1$ and $k \geq 2$, we can find a subspace $K$ through $c$ containing some point $b \in \pi$.
  Then $f_c(K) = 0$, and so $f_c \neq 1$. It follows that $f_c = 0$ (all other cases being ruled out by Lemma~\ref{lem:antiflag_compl} and Lemma~\ref{lem:point_compl}, and their complemented versions).
  Hence, $f = \pi^+$.
\end{proof}

Equivalent statements where all $+$ are replaced by $-$ follow by taking complements.
It remains to show an analog of Lemma \ref{lem:johnson-deg1-inductive}.
\begin{theorem}\label{thm:grassmann_inductive_step}
  Let $n-k, k > 2$.
  If all Boolean degree~$1$ functions on $J_q(n-1,k-1)$ are trivial, then all Boolean degree~$1$ functions on $J_q(n,k)$ are also trivial.
\end{theorem}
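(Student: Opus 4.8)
Given a Boolean degree~$1$ function $f$ on $J_q(n,k)$, the plan is to imitate the passage from Lemma~\ref{lem:johnson-deg1-pair} to Lemma~\ref{lem:johnson-deg1-inductive}: determine every restriction $f_a$ ($a$ a point), and then reassemble $f$. By the hypothesis of the theorem each $f_a$ is a trivial Boolean degree~$1$ function on $J_q(n-1,k-1)$. After the harmless normalisations ($\pi^+$ restricted to the $k$-spaces through a point $a\notin\pi$ is the constant $0$; $p^+\lor\pi^+$ with $a\notin\pi$ equals $p^+$; $p^+$ with $p=a$ is the constant $1$; and likewise for complements), every $f_a$ is one of: a constant; a genuine point function $p^+$ or $p^-$; a genuine hyperplane function $\pi^+$ or $\pi^-$ with $a\in\pi$; or a genuine antiflag $p^+\lor\pi^+$ or $p^-\land\pi^-$ with $a\in\pi$, $p\notin\pi$.

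I would then run a case analysis on the ``richest'' type occurring among the $f_a$, treating $f$ and $1-f$ symmetrically. If some $f_a$ (or some $(1-f)_a$) is a genuine antiflag, Lemma~\ref{lem:antiflag_compl} (or its complement) immediately gives that $f$ is a (complemented) antiflag function. Otherwise, if some $f_a$ (or $(1-f)_a$) is a genuine point function, then --- since we are past the antiflag case --- the side hypothesis of Lemma~\ref{lem:point_compl} holds, and it gives $f=\tilde p^\pm$. Otherwise, if some $f_a$ is a genuine hyperplane function, then the side hypothesis of Lemma~\ref{lem:hyp_compl} holds and it gives $f=\pi^\pm$. In the remaining case every $f_a$ is constant; since for distinct points $a,b$ there is a $k$-space containing $\langle a,b\rangle$ (using $k\ge 2$), all the $f_a$ must be the \emph{same} constant, so $f$ is constant. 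In every case $f$ is trivial.

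The only genuinely delicate part is verifying, for Lemmas~\ref{lem:point_compl} and~\ref{lem:hyp_compl}, that their side hypotheses (no $f_b$ is a genuine antiflag, respectively no $f_b$ is a genuine antiflag or point) really are in force; this is precisely why the cases must be ordered as above and why one must carry along all the normalisations listed in the first paragraph (and their complemented forms, which is why $f$ and $1-f$ are handled together). A further, minor point of bookkeeping is that the completion lemmas ultimately rest on Lemmas~\ref{lem:char_by_quotient_case_pt}--\ref{lem:char_by_quotient_case_pt_pl}, stated for $n\ge 2k$; one switches to that orientation when necessary via the isomorphism $J_q(n,k)\cong J_q(n,n-k)$, which interchanges points and hyperplanes and carries trivial functions to trivial functions.

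I do not expect a new idea to be required: the real work has already been done in Lemmas~\ref{lem:char_by_quotient_case_pt}--\ref{lem:char_by_quotient_case_pt_pl} and in the completion Lemmas~\ref{lem:antiflag_compl}, \ref{lem:point_compl} and~\ref{lem:hyp_compl}, so the obstacle here is purely organisational --- assembling them into an exhaustive, correctly ordered case split and getting every degenerate sub-case and parameter convention right, exactly as in the Johnson analogue.
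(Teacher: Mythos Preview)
Your proposal is correct and matches the paper's approach exactly: the paper's proof of this theorem is a one-line invocation of Lemmas~\ref{lem:antiflag_compl}, \ref{lem:point_compl}, \ref{lem:hyp_compl} and their complemented versions, and what you have written is precisely the elaboration of how those lemmas fit together into an exhaustive case split (including the constant case, which the paper leaves implicit). Your remark about the $n\ge 2k$ orientation and duality is also apt bookkeeping that the paper handles only tacitly.
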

\begin{proof}
  This immediately follows from Lemma \ref{lem:antiflag_compl}, Lemma \ref{lem:point_compl}, Lemma \ref{lem:hyp_compl}, and their complemented versions.
\end{proof}

% \begin{theorem}\label{thm:grassmann_inductive_step}
%   Let $n-k, k > 2$. Suppose that all Boolean degree $1$ functions on $J_q(n, n-k+1)$ are trivial.
%   Then all Boolean degree $1$ functions on $J_q(n, n-k)$ are trivial. \textbf{That's not what your induction is doing!!!}
% \end{theorem}
% \begin{proof}
%   Lemma \ref{lem:antiflag_compl}, Lemma \ref{lem:point_compl}, and Lemma \ref{lem:hyp_compl} imply that
%   all Boolean degree $1$ functions $f$ on $J_q(n, n-k)$ satisfy either $f = 0, p^+, \pi^+, p^+ \lor \pi^+$ or $1-f = 0, p^+, \pi^+, p^+ \lor \pi^+$.
%   Hence, all Boolean degree $1$ functions on $J_q(n, n-k)$ are trivial.
% \end{proof}

\begin{proof}[Proof of Theorem \ref{thm:grassmann_class}]
  Due to Theorem \ref{thm:k_eq_2_results}, all Boolean degree~$1$ functions on $J_q(n, 2)$ (and so, by duality, on $J_q(n,n-2)$) are trivial for $n \geq 5$ and $q \in \{ 2, 3, 4, 5 \}$.
  Applying Theorem~\ref{thm:grassmann_inductive_step} inductively, we deduce that all Boolean degree~$1$ functions on $J_q(n,k)$ are trivial whenever $k,n-k > 2$ and $q \in \{2,3,4,5\}$.
\end{proof}

\section{Dual polar graphs and related graphs}\label{ch:polar}

Let $q$ be a prime power.
A \textit{polarity} $\perp$ of $\FF_q^m$ is a bijective map on all subspaces of $\FF_q^m$ 
such that a $k$-space is mapped to an $(m-k)$-space which is incidence preserving, that is $A \subseteq B \rightarrow B^\perp \subseteq A^\perp$ 
and an involution, that is $\perp^2 = 1$.
We say that a subspace with $L \subseteq L^\perp$ is \textit{isotropic}.
A polar space consists of all isotropic subspaces except for some cases with $q$ even \footnote{See for example \cite{Brouwer1989} or \cite{Terwilliger} for a formal definition in these cases.}
The finite classical polar spaces are $O^+(2n, q)$, 
$O(2n+1, q)$, $O^-(2n+2, q)$, $Sp(2n, q)$, $U(2n, q)$, and $U(2n+1, q)$. 
We associate (in the same order) the parameter $e = 0, 1, 2, 1^*, 1/2, 3/2$
with them. 
Here $n$ is the \textit{rank} of the polar space, that is the 
maximal dimension of a isotropic subspace.
Note that $q$ is a square of a prime power for $U(2n, q)$ and $U(2n+1, q)$.
To simplify notation, we denote a polar space of rank $n$ with parameter $e$
over $\FF_q$ by $\PP_q(n, e)$.
The natural embedding in $\FF_q^m$ is as follows: $O^+(2n, q)$, $Sp(2n, q)$ and $U(2n, q)$ in $\FF_q^{2n}$,
$O(2n+1, q)$ and $U(2n+1, q)$ in $\FF_q^{2n+1}$, and $O^-(2n+2, q)$ in $\FF_q^{2n+2}$.
We call the corresponding vector space the \textit{ambient space}.
We call a subspace of maximal dimension $n$ a \textit{maximal}.
The graphs $C_q(n, k, e)$ are the graphs having as vertices the
isotropic $k$-spaces of $\PP_q(n, e)$, two vertices being adjacent when their meet is a subspace of dimension $k-1$.
The graph $C_q(n, n, e)$ is known as the \textit{dual polar graph} of $\PP_q(n, e)$,
while the graph $C_q(n, 1, e)$ is sometimes called the \textit{collinearity graph}
of a $\PP_q(n, e)$.
We identify $\PP_q(n, e)$ with its natural embedding in $\FF_q^m$ for some $m \in \{ 2n, 2n+1, 2n+2 \}$,
so when $S$ and $T$ are subspaces of $\PP_q(n, e)$, then $\langle S, T \rangle$ is a subspace
of $\FF_q^m$ and $\PP_q(n, e) \cap \langle S, T \rangle$ is a subgeometry of $\PP_q(n, e)$.

The eigenspaces of $C_q(n, k, e)$ were described by Eisfeld \cite{Eisfeld1999} 
(see also \cite{Stanton1980a} and \cite[\S4.2]{Vanhove2011}). From this it is clear that 
for $C_q(n, n, e)$, degree $1$ functions correspond to completely regular strength $0$
codes of covering radius 1. For some combinations of $e$ and $n$, these are called 
Cameron--Liebler sets in polar spaces of type I in \cite{DeBoeck2017}.
A similar observation can be made for all $C_q(n, k, e)$ with $k>1$. There, the 
span of the indicator functions $p_i^+$ for all points $p_i$ span three eigenspaces, for example in 
Vanhove's notation for $n\geq k+1$, these are $V_{0,0}^k + V_{1,0}^k + V_{1,1}^k$.

The graphs $C_q(n, 1, e)$ are strongly regular and it seems to be most reasonable to consider maximals
(rather than points) as our coordinates as (in the usual ordering)
the subspace $V_0 + V_1$ corresponds to the span of the characteristic vector of the maximals.
We want to point out that Boolean degree $1$ functions on $C_q(n, 1, e)$
are commonly known as tight sets \cite{Bamberg2007}, and have been intensively
investigated. The rest of this section is concerned only with $k>1$.
Tight sets have an important role for the $J_q(4, 2)$ case of the Grassmann
graph, as $J_q(4, 2)$ and $C_q(3, 1, 0)$ are isomorphic by the Klein correspondence
between $\FF^3$ and $O^+(6, \FF)$.

\subsection{Some properties}

In this short section, we list a few useful properties of Boolean degree $1$ functions
on $C_q(n, k, e)$ for $k>1$.
We already saw that a polar space $\PP_q(n, e)$ has a natural embedding 
in a vector space $\FF_q^m$. Let $\pi$ be a hyperplane of $\FF_q^m$.
Then $\PP_q(n, e) \cap \pi$ is either \textit{degenerate}, that is isomorphic 
to $p\PP_q(n-1, e)$ for some isotropic point $p$, or \textit{non-degenerate}.
The possibilities for the type of a non-degenerate hyperplane intersection
are as follows:
\begin{enumerate}[(a)]
 \item A non-degenerate hyperplane of $O^+(2n, q)$ is isomorphic to $O(2n-1, q)$,
 \item a non-degenerate hyperplane of $O(2n+1, q)$ is isomorphic to $O^+(2n, q)$ or $O^-(2(n-1)+2, q)$,
 \item a non-degenerate hyperplane of $O^-(2n+2, q)$ is isomorphic to $O(2n+1, q)$,
 \item a non-degenerate hyperplane of $U(2n+1, q)$ is isomorphic to $U(2n, q)$,
 \item a non-degenerate hyperplane of $U(2n, q)$ is isomorphic to $U(2n-1, q)$.
\end{enumerate}
The polar space $Sp(2n, q)$ has no non-degenerate hyperplane sections, but for 
$q$ even it is isomorphic to $O(2n+1, q)$.

We observe the following:
\begin{enumerate}[(a)]
 \item $C_q(n, k, 0)$, $C_q(n, k, 1^*)$, and $C_q(n, k, 1/2)$ are coordinate-induced subgraphs of $J_q(2n, q)$,
 \item $C_q(n, k, 1)$ and $C_q(n, k, 3/2)$ are coordinate induced subgraphs of $J_q(2n+1, k)$,
 \item $C_q(n, k, 2)$ is a coordinate-induced subgraph of $J_q(2n+2, k)$,
 \item $C_q(n, k, 0)$ is a coordinate-induced subgraph of $C_q(n, k, 1)$, $C_q(n, k, 1)$ 
 of $C_q(n, k, 2)$, and $C_q(n, k, 1/2)$ of $C_q(n, k, 3/2)$.
\end{enumerate}

\subsection{Some results}

Some Boolean degree $1$ functions of $C_q(n, k, e)$ are induced 
from the trivial functions on the ambient graph $J_q(m, k)$.
Here is a complete list:
\begin{enumerate}[(a)]
 \item $f = 1^\pm$.
 \item $f = p^\pm$ for a point $p$.
 \item $f = \pi^\pm$ for a hyperplane $\pi$ of the ambient space. 
      Notice that $\pi$ can intersect the polar space either in a proper polar space
      or in $p^\perp$ for some point $p$ of $\PP_q(n, e)$.
 \item $f = (p \lor \pi)^\pm$, where $p$ and $\pi$ are as before.
\end{enumerate}
Notice that Boolean degree $1$ polynomials expressed with these functions
can be very complicated. For example, if $\pi = p^\perp$,
then $f = \pi^+ \land p^-$ is a Boolean degree $1$ function.

We conjecture the following:
\begin{conjecture}\label{conj:polar_main}
  Let $k \geq 2$. Then there exists $n_{q,k}$ such that
  every Boolean degree $1$ function $f$ on $C_q(n, k, e)$ with $n \geq n_{q,k}$
  is trivial, that is $f$ can be written as
  \begin{align*}
    f^\pm = \bigvee_i p_i^+ \vee \bigvee_i \pi_i^+ \vee \bigvee_i (\tilde{\pi}_i^+ \wedge \tilde{p}_i^-).
  \end{align*}
  Here $p_i$ are the points of the polar space, $\pi_i$ are non-degenerate hyperplanes
  of the ambient space, and $\tilde{\pi}_i$ are degenerate hyperplanes of the ambient space
  with $\tilde{\pi}_i = \tilde{p}_i^\perp$.
\end{conjecture}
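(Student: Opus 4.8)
The plan is to follow the inductive architecture developed for the Grassmann graph in this paper: reduce $C_q(n,k,e)$ to $C_q(n-1,k-1,e)$, bottoming out at $k=2$, and always working up to replacing $f$ by $1-f$ when convenient. Concretely, assume as inductive hypothesis that every Boolean degree $1$ function on $C_q(n-1,k-1,e)$ is trivial, for every one of the six classical families of polar spaces, and aim for the same conclusion for $C_q(n,k,e)$ with $k\ge 3$ and $n-k$ large. For an isotropic point $a$ of $\PP_q(n,e)$, let $f_a$ be the restriction of $f$ to the isotropic $k$-spaces through $a$; these correspond to the isotropic $(k-1)$-spaces of the residue $a^{\perp}/a$, which carries an induced polar space structure of rank $n-1$ with the \emph{same} parameter $e$, so $f_a$ lives on $C_q(n-1,k-1,e)$ and is trivial by induction. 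It is also useful to restrict $f$ to a hyperplane section of the ambient space, which by the list in Section~\ref{ch:polar} is again a rank-$(n-1)$ polar space, possibly of a different parameter $e'$ or a degenerate cone $p\PP_q(n-1,e)$; this forces the induction to be run simultaneously over all polar space types. The whole scheme then rests on a base case: a classification of the Boolean degree $1$ functions on the family $C_q(n,2,e)$ (for all $n\ge n_0$), playing the role that Theorem~\ref{thm:k_eq_2_results} plays for the Grassmann graph, which is currently available only for $O^+(2n,2)$ --- precisely why Theorem~\ref{thm:polar_class_intro} is stated only for that case.

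With the base case in hand, the inductive step should split, as in Lemmas~\ref{lem:char_by_quotient_case_pt}--\ref{lem:hyp_compl}, into a local part and a gluing part. For the local part one fixes two isotropic points $a,b$ (and, in some sub-cases, a line or hyperplane through them) and determines which pairs of trivial functions can occur as $(f_a,f_b)$. The new feature relative to the Grassmann setting is that a trivial function is a disjoint union of atoms of three kinds --- $p^+$ for an ambient point $p$, $\pi^+$ for a non-degenerate ambient hyperplane $\pi$, and $\tilde\pi^+\wedge\tilde p^-$ for a degenerate hyperplane $\tilde\pi=\tilde p^{\perp}$ --- so instead of a short finite list of possibilities one tracks, for each such atom, whether it ``occurs at $a$'', meaning $f_a(K)=1$ for every $k$-space $K\ni a$ lying in that atom. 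The key statement to extract is a propagation lemma: an atom that occurs at $a$ occurs at every isotropic point incident to it, and an atom that occurs at all of its incident points forces $f$ to contain that atom as a constituent. This is obtained by passing to quotients $\langle a,b\rangle/b$ and to hyperplane sections and invoking the already-established triviality there, exactly as the quotient lemmas of the Grassmann section are used.

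Finally one lets $g$ be the disjoint union of all atoms that occur at their incident points, checks via the local analysis that these occurrence conditions are pairwise compatible so that $g$ is a well-defined trivial Boolean degree $1$ function, and verifies $f=g$ by evaluating on an arbitrary isotropic $k$-space $K$: picking an isotropic point $a\in K$, both $f_a$ and $g_a$ are determined by the same occurrence data on the residue of $a$, so $f(K)=f_a(K)=g_a(K)=g(K)$. I expect the difficulty to be concentrated in two places. The decisive obstacle is the base case: for polar spaces other than $O^+(2n,2)$ there is no analog of Theorem~\ref{thm:k_eq_2_results}, and proving one looks genuinely hard, since low-rank polar spaces are exactly where exotic Boolean degree $1$ functions (Cameron--Liebler-type configurations and their relatives) tend to live. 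The secondary difficulty is that the local/propagation bookkeeping is much heavier than in the Grassmann case: the atom set is larger, the degenerate hyperplane $\tilde p^{\perp}$ interacts delicately with the points it contains, and the case analysis must close uniformly across all six classical families and across every residual parameter produced by a hyperplane section.
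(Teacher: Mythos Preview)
The statement you are attempting is Conjecture~\ref{conj:polar_main}, and the paper does \emph{not} prove it: it is presented as an open problem, with only the single case $q=2$, $e=0$ (Theorem~\ref{thm:polar_small_dim_main}) established. So there is no proof in the paper to compare against, and what you have written is a research strategy rather than a proof --- a point you effectively concede when you identify the $k=2$ base case as the ``decisive obstacle'' available only for $O^{+}(2n,2)$.

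Comparing your strategy to what the paper does in the case it can handle: the paper's inductive engine (Theorems~\ref{thm:polar_inductive_step_inc_k} and~\ref{thm:polar_inductive_step}) increments $n$ and $k$ while taking as hypothesis triviality on both $C_q(n,k,e)$ and the ambient Grassmann graph $J_q(n{+}1,k)$ or $J_q(n{+}1,k{+}1)$. Crucially, it works only with residues $a^{\perp}/a$, which preserve the parameter $e$, and borrows structural control from the Grassmann classification rather than from hyperplane sections. Your plan to restrict to non-degenerate hyperplanes of the ambient space changes $e$ and forces a simultaneous induction over all six types; this is more fragile, since $Sp(2n,q)$ for odd $q$ has no non-degenerate hyperplane sections at all, and degenerate sections are cones $p\PP_q(n{-}1,e)$ that do not land cleanly back in the inductive hypothesis. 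The paper's route avoids this entirely.

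The other device in the paper that your outline is missing is the \emph{reduction} step (the discussion before Lemma~\ref{lem:reduced_and_trivial}, driven by Lemma~\ref{lem:polar_pt_trans}): one peels off point-atoms $p^{+}$ one by one until every restriction $f_S$ to a maximal lies in $\{0,1,\pi^{+},\pi^{-}\}$. This is what collapses the unbounded disjunction $\bigvee_i p_i^{+}$ to something finite before the local analysis begins. Your ``track which atoms occur at $a$'' bookkeeping is aiming at the same goal, but without a mechanism like reduction the number of point-atoms is unbounded and the compatibility check you sketch does not obviously close. In summary: the hard gap is exactly the base case you name, but even modulo that, the inductive step as you describe it would need either the paper's reduction trick or some substitute to tame the point-atoms, and the hyperplane-section leg of your induction needs special handling for the symplectic case.
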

The conjecture simplifies in some cases:
\begin{enumerate}
 \item For $k < n$ every pair of hyperplanes contains a common $k$-space,
and so the condition simplifies in this case, and we have to use at most one $\pi_i$ or $\tilde{\pi}_i$.
 \item For $k=n$ no non-degenerate hyperplane of $O^+(2n, q)$ or $U(2n, q)$ contains a $k$-space,
 so no $\pi_i$ occur.
 \item For $k=n$ we have $\tilde{p}_i^\pm = \tilde{\pi}_i^\pm$, so the last clause in the conjecture is superfluous. 
 \item $Sp(2n, q)$ has only degenerate hyperplane sections. If $q$ is even, then we identify $Sp(2n, q)$ with the isomorphic
    $O(2n+1, q)$ which has non-degenerate hyperplane sections. If $q$ is odd, then again no $\pi_i$ occur.
\end{enumerate}
We prove Conjecture \ref{conj:polar_main} for one particular case:

\begin{theorem}\label{thm:polar_small_dim_main}
  Let $k, n-k \geq 2$ or $(n,k) \in \{ (2,2), (3,2) \}$. Then
  every Boolean degree $1$ function $f$ on $C_2(n, k, 0)$
  is trivial, that is $f$ or $1-f$ is one of the following:
  \begin{enumerate}[(a)]
   \item $f = 1^\pm$,
   \item $f = \pi^\pm$ for a hyperplane $\pi$,
   \item $f = (\bigvee_i p_i^+)^\pm$ for pairwise non-collinear points $p_i$,
   \item $f = (\pi^+ \vee \bigvee_i p_i^+)^\pm$ for pairwise non-collinear points $p_i$
   and a hyperplane $\pi$, where $p_i \notin \pi$,
   \item $f = ((\pi^+ \land p_1^-) \vee \bigvee_{i>1} p_i^+)^\pm$ 
   for pairwise non-collinear points $p_i$, $\pi$ a degenerate hyperplane,
   and $p_1 = \pi^\perp$.
  \end{enumerate}
\end{theorem}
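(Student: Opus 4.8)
The plan is to adapt the inductive strategy behind Theorem~\ref{thm:grassmann_class} to polar spaces, the essential new phenomenon being that a trivial function on a polar space may be a disjoint union of several point indicators, so the reconstruction (``gluing'') phase must produce such a union, possibly together with a single hyperplane clause that may be degenerate.

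\textbf{Base cases.} For the exceptional parameters $(n,k)\in\{(2,2),(3,2)\}$ the graphs $C_2(n,k,0)$ are tiny---$C_2(2,2,0)$ is the dual polar graph of $O^+(4,2)$ on six vertices---and the stated list can be verified by exhaustive search or by the integer linear program of Section~\ref{subsec:asssoc}. The genuine bottom of the induction is the family $C_2(m,2,0)$ with $m\ge 4$: here one invokes the classification of Boolean degree~$1$ functions on the lines of $O^+(2m,2)$ (the polar-space analogue of Theorem~\ref{thm:k_eq_2_results} available precisely for this family, which is exactly what limits the theorem to $O^+(2n,2)$), reading off that on $C_2(m,2,0)$ every Boolean degree~$1$ function is one of the forms (a)--(e).

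\textbf{Inductive step.} Assume $k\ge 3$, $n-k\ge 2$, and that the theorem holds on $C_2(n-1,k-1,0)$. For an isotropic point $a$, let $f_a$ be the restriction of $f$ to the isotropic $k$-spaces through $a$. Since $a^\perp/a\cong\PP_2(n-1,0)$, the domain of $f_a$ is $C_2(n-1,k-1,0)$, and by Lemma~\ref{lem:natural_embedding_rule} $f_a$ is Boolean degree~$1$, hence trivial by induction; moreover $k-1<n-1$, so $f_a$ carries at most one hyperplane clause. The heart of the proof is a case analysis---in the spirit of Lemmas~\ref{lem:char_by_quotient_case_pt}--\ref{lem:hyp_compl} and Corollary~\ref{cor:antiflag_hull}---of how $f_a$ and $f_b$ constrain one another. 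For \emph{collinear} $a,b$ the common isotropic $k$-spaces correspond to isotropic $(k-2)$-spaces in $\langle a,b\rangle^\perp/\langle a,b\rangle\cong\PP_2(n-2,0)$, and matching $f_a,f_b$ on this common domain forces the point clauses, and the hyperplane clause if present, to agree after lifting, in the precise sense of Corollary~\ref{cor:antiflag_hull} and Lemma~\ref{lem:antiflag_compl}; for \emph{non-collinear} $a,b$ there is no common isotropic $k$-space, and this is exactly what permits independent point clauses and so produces the disjoint-union shape. From this one deduces: (i) if some $f_a$ has a hyperplane clause of $\FF_2^{2n}$, then, since a hyperplane meets every such neighbourhood, the clause propagates to all $a$ and $f$ is globally of type (b), (d) or (e); (ii) otherwise every $f_a$ is a disjoint union $\bigvee_i \bar p_i^+$ of point functions of $\PP_2(n-1,0)$, and lifting these through the points $\langle a,p\rangle$ gives $f=\bigvee_j q_j^+$ for isotropic points $q_j$, which must be pairwise non-collinear---otherwise $q_i^+\wedge q_j^+\neq 0$ and $f$ would have degree $2$---so $f$, up to complementation, is of type (a) or (c). The degree condition together with $q_j\notin\pi$ and pairwise non-collinearity is also what makes the ORs in (d), (e) genuine degree~$1$ functions, using the identity $\pi^+\wedge p^-=\pi^+-p^+$ when $\pi=p^\perp$.

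\textbf{Main obstacle.} The delicate part is case (ii) and its interaction with a possible hyperplane clause. Unlike the Grassmann situation, a single $f_a$ may simultaneously carry several point clauses, the lifted points $q_j$ range over the whole polar space rather than over a fixed line through $a$, and one must both make the lift well defined (intersecting the lines $\langle a,q_j\rangle$ across varying base points $a$, as in the proof of Lemma~\ref{lem:antiflag_compl}) and exclude configurations in which collinear points would be forced into the support. Most subtle is separating the two reasons a neighbourhood can ``see a hyperplane''---a non-degenerate ambient hyperplane versus the degenerate $p_1^\perp$ of type (e)---and showing these cannot mix with extra point clauses except in the listed way; this is where case (e) is pinned down.
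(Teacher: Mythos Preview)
Your proposal has a genuine gap at the base of the induction. You assert that there is a ``polar-space analogue of Theorem~\ref{thm:k_eq_2_results}'' classifying Boolean degree~$1$ functions on $C_2(m,2,0)$ for all $m\ge 4$, but no such external result exists; the reason the theorem is restricted to $O^+(2n,2)$ is not an available line classification but the computer verification in Lemma~\ref{lem:polar_small_dim_base_case}, which only covers $(n,k)\in\{(2,2),(3,2),(4,2)\}$. To reach every pair with $k,n-k\ge 2$ from that single seed, the paper uses \emph{two} inductive steps: Theorem~\ref{thm:polar_inductive_step_inc_k}, which goes from $(n,k)$ to $(n+1,k+1)$ (this is your step), and Theorem~\ref{thm:polar_inductive_step}, which goes from $(n,k)$ to $(n+1,k)$ and is precisely what carries $(4,2)$ to all of $C_2(m,2,0)$. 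Your scheme omits this second branch entirely, so as written it never even reaches $C_2(5,2,0)$.

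There is also a substantive methodological difference in the inductive step. The paper does not try to glue multiple point clauses directly. Instead it first \emph{reduces} $f$: Lemma~\ref{lem:polar_pt_trans} (applied to $f_S$ on maximals $S$, using the Grassmann classification there) shows that whenever $p^+\to f_S$ for some maximal $S\ni p$ one in fact has $p^+\to f$ on all of $p^\perp$, so $f\wedge p^-$ is again Boolean degree~$1$; iterating, and dually, strips off every point clause and leaves a reduced function whose restrictions satisfy $f_S\in\{0,1,\pi^+,\pi^-\}$. The remaining case analysis (Lemma~\ref{lem:polar_hyperplane_small_dim}, Lemma~\ref{lem:polar_from_a_to_aperp}, and the proofs of Theorems~\ref{thm:polar_inductive_step_inc_k} and~\ref{thm:polar_inductive_step}) is then a hyperplane-only argument. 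This reduction is exactly what sidesteps the ``delicate part'' you flag in case~(ii): the disjoint union of non-collinear point indicators is not reconstructed by lifting from the $f_a$, it is peeled off in advance and re-appended at the end. Your direct-gluing sketch would instead have to establish, across varying base points $a$, that the lifted points are globally consistent, pairwise non-collinear, and do not interfere with a possible hyperplane clause; the outline does not supply that argument.
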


Using the mixed integer program solver Gurobi, we obtain the following.
\begin{lemma}\label{lem:polar_small_dim_base_case}
  Theorem \ref{thm:polar_small_dim_main} holds for $(n,k)=(2,2)$, $(n,k)=(3,2)$ and $(n,k) = (4, 2)$.
\end{lemma}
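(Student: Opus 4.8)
The plan is to prove Lemma~\ref{lem:polar_small_dim_base_case} by exhaustive computation, following the same integer-linear-programming paradigm described for $Q$-polynomial schemes in Section~\ref{subsec:asssoc}, specialized here to the dual polar graph $C_2(n,2,0)$ for $n\in\{2,3,4\}$. First I would fix a concrete model of the hyperbolic quadric $O^+(2n,2)$ inside $\FF_2^{2n}$: pick the standard bilinear form $b(x,y)=\sum_{i=1}^n (x_iy_{n+i}+x_{n+i}y_i)$, enumerate its isotropic $2$-spaces as the vertex set $X$ of $C_2(n,2,0)$, and compute the adjacency relation (meet of dimension $1$). Since everything is over $\FF_2$, the vertex counts are modest ($|X|$ is $15$, $315$, and $5355$ for $n=2,3,4$ respectively), so this enumeration is entirely routine.

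Next I would set up the feasibility program. The decision variables are $f(K)\in\{0,1\}$ for $K\in X$, so $f$ is automatically Boolean. To force $f$ to be a degree~$1$ function I would impose the linear constraints that $f$ lies in $V_{0,0}^2+V_{1,0}^2+V_{1,1}^2$ in Vanhove's notation — equivalently, as explained in the excerpt, that $f$ is spanned by the point-indicators $p_i^+$ together with the constant function; concretely, for each eigenspace $V_j$ with $j\notin\{(0,0),(1,0),(1,1)\}$ and each vertex $x$ one adds the equation $\sum_K f(K)\,g_{x,j}(K)=0$, where $g_{x,j}$ is the dual eigenvector attached to $x$ and $V_j$, read off from the $Q$-matrix of the scheme $C_2(n,2,0)$ (whose eigenvalues are recorded in \cite{Eisfeld1999} and \cite[\S4.2]{Vanhove2011}). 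This yields a polytope whose integer points are exactly the Boolean degree~$1$ functions on $C_2(n,2,0)$. Finally, I would enumerate all such integer points — in practice, by repeatedly calling Gurobi: solve, record the solution, add a no-good cut excluding it (and all of its automorphic images, to cut down the search), and re-solve until infeasible — and then verify by hand that every function produced is of one of the forms (a)–(e) in the statement of Theorem~\ref{thm:polar_small_dim_main}. For $n=4$ one may instead, more cheaply, add the constraints $f\neq h$ for every $h$ on the conjectured list and check that the program is then infeasible, which is what the excerpt's three-step recipe literally prescribes.

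There is one genuine mathematical point hidden in this otherwise mechanical plan: the case $(n,k)=(2,2)$ is degenerate because $C_2(2,2,0)\cong C_2(2,1,0)$ is a strongly regular graph ($15$ vertices, the generalized quadrangle $W(3,2)\cong Q(4,2)$ self-dual picture), $k=n$ so there are no non-degenerate hyperplane sections containing a maximal, and the families (b)–(e) collapse: every degenerate hyperplane $\pi=p^\perp$ gives $\pi^+=$ ``all maximals through $p$'' which must be re-read via $\tilde p_i^\pm=\tilde\pi_i^\pm$ per the remarks after Conjecture~\ref{conj:polar_main}. So for $n=2$ I would separately confirm that the integer points of the program are exactly: the constants, the ``stars'' $\bigvee_i p_i^+$ on pairwise non-collinear point sets (i.e.\ partial ovoids of $W(3,2)$, together with their complements the ``rosettes''), and nothing else — and that these indeed match forms (a)–(e) once the collapses above are applied.

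The main obstacle I anticipate is not conceptual but one of scale and bookkeeping at $n=4$: the program has $5355$ binary variables and on the order of thousands of equality constraints, and a naive solution-enumeration loop could be slow and could in principle miss solutions if the no-good cuts are added incorrectly. The mitigations are standard: exploit the action of $O^+(8,2)$ (and the triality automorphism) to work only up to isomorphism, so that a single representative of each orbit suffices and the no-good cuts can be made orbit-wise; and cross-check the final list against the divisibility/size constraints of Lemma~\ref{lem:ev_char} applied to $C_2(4,2,0)$, which sharply restricts the possible weights $|f|$ and so gives an independent sanity check that no family of solutions has been overlooked. Once the three base cases are verified, Lemma~\ref{lem:polar_small_dim_base_case} is established and can be fed into the inductive argument that proves Theorem~\ref{thm:polar_small_dim_main} in general.
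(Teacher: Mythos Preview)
Your plan is essentially the paper's own proof: the authors state only that the three base cases were verified with the mixed-integer solver Gurobi via the ILP recipe of Section~\ref{subsec:asssoc}, and remark that for $(n,k)=(4,2)$ it is helpful to impose the extra constraints coming from Lemma~\ref{lem:reduced_and_trivial} (one first reduces $f$, then uses that each $f_S$ on a maximal $S\cong J_2(4,2)$ is trivial by Theorem~\ref{thm:grassmann_class}, which forces the restrictions to $C_2(3,2,0)$-subgeometries to be reduced as well). Your alternatives for taming the $(4,2)$ case --- orbit-wise no-good cuts under the full automorphism group and an infeasibility check against the conjectured list --- are reasonable substitutes.

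One concrete slip to correct before implementing: your vertex counts $15,315,5355$ and your identification of the $n=2$ case with the generalized quadrangle $W(3,2)\cong Q(4,2)$ belong to the symplectic/parabolic spaces $Sp(2n,2)\cong O(2n{+}1,2)$ (parameter $e=1$ or $1^*$), not to the hyperbolic quadric $O^+(2n,2)$ (parameter $e=0$) that Theorem~\ref{thm:polar_small_dim_main} is about. The correct numbers of isotropic $2$-spaces in $O^+(2n,2)$ for $n=2,3,4$ are $6,105,1575$; in particular $C_2(2,2,0)$ is the dual polar graph of the $3\times 3$ grid $O^+(4,2)$, namely $K_{3,3}$ on the six lines of the two reguli, and it is not isomorphic to $C_2(2,1,0)$ (which is the $9$-vertex rook's graph). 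This does not affect your method, but it changes the sizes of the ILPs and invalidates your side discussion of how families (b)--(e) collapse at $n=2$.
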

Notice that for the case $(n,k)=(4,2)$ it is helpful to use Lemma \ref{lem:reduced_and_trivial}
for additional constraints.

\begin{lemma}\label{lem:polar_pt_trans}
  Let (a) $n \geq 5$ or (b) $n=4$ and $q=2$. Let $k, n-k \geq 2$.
  Let $f$ be a Boolean degree $1$ function of $C_q(n, k, e)$.
  Let $S$ be a maximal of $\PP_q(n, e)$ and $p$ a point of $S$.
  Then the following holds:
  \begin{enumerate}[(a)]
   \item $f_S = p^+$ implies that $f_T = p^+$ for all maximals $T$ with $p \in T$,
   \item $f_S = p^+ \lor \pi_S^+$ implies 
	that $f_T = p^+ \lor \tilde{\pi}_T^+$ for all maximals $T$ with $p \in T$ ($\pi_X$ a hyperplane of $X \in \{ S, T \}$).
  \end{enumerate}
\end{lemma}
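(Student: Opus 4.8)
The plan is to mimic the strategy used for Grassmann graphs: reduce each transfer claim to one of the quotient/restriction lemmas (Lemma \ref{lem:char_by_quotient_case_pt} and Lemma \ref{lem:char_by_quotient_case_pt_pl}), but now carried out inside the polar space rather than inside a bare projective space. Fix maximals $S, T$ of $\PP_q(n,e)$ both containing the point $p$, and suppose first that $f_S = p^+$. I would connect $S$ to $T$ by a chain $S = S_0, S_1, \ldots, S_r = T$ of maximals through $p$ in which consecutive maximals meet in a large subspace (say codimension $1$ in each, i.e.\ $S_i \cap S_{i+1}$ is an $(n-1)$-space containing $p$); such a chain exists because the dual polar graph $C_q(n,n,e)$ restricted to maximals through $p$ is connected (it is the dual polar graph of $p^\perp/p \cong \PP_q(n-1,e)$). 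So it suffices to handle the case where $S \cap T =: M$ has dimension $n-1$ and $p \in M$.

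For that step I would pass to a suitable quotient or subspace so that Lemma \ref{lem:char_by_quotient_case_pt} applies. Concretely, pick a point $a \in M \setminus \{p\}$ (possible since $n \geq 2$), set $\ell = \langle a, p\rangle$, and look at $f_a$, the restriction of $f$ to $k$-spaces through $a$; this is a Boolean degree $1$ function on (a polar-space analog of) $J_q(n-1,k-1)$. The hypothesis $f_S = p^+$ pins down the behaviour of $f_a$ on $k$-spaces of $S$ through $a$: it is $1$ exactly on those containing $p$, i.e.\ exactly on those containing $\ell$. Feeding this into Lemma \ref{lem:char_by_quotient_case_pt} (applied in the quotient by $a$, with $\langle a,b\rangle/a$-type data as in the proofs of Corollary \ref{cor:antiflag_hull} and Corollary \ref{cor:point_hull}) forces $f_a = \tilde p^+$ or $f_a = \tilde p^+ \lor \tilde\pi^+$ with $\tilde p \in \ell$, and then $\tilde p = p$ because $a$ itself is a zero of $f_a$ only on the $k$-spaces not through $p$. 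Since $T$ also contains $a$ and $p$, evaluating $f_a$ on $k$-spaces of $T$ gives $f_T = p^+$ (the $\tilde\pi^+$ clause, if present, gets absorbed or ruled out exactly as in Corollary \ref{cor:antiflag_hull}(b), using that two distinct hyperplanes of the ambient space cannot both be forced). Part (b), where $f_S = p^+ \lor \pi_S^+$, is the same argument but now the behaviour of $f_S$ on $k$-spaces of $S$ through $a$ is "$1$ if contained in $\pi_S$ or containing $\ell$, $0$ otherwise", which is exactly the input to Lemma \ref{lem:char_by_quotient_case_pt_pl}; that lemma yields $f_a = p^+ \lor \pi^+$, and restricting back to $T$ gives $f_T = p^+ \lor \tilde\pi_T^+$ for some hyperplane $\tilde\pi_T$ of $T$'s ambient space. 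One should note that the hyperplane may genuinely change when moving from $S$ to $T$ (hence the tilde), since $\pi_S$ is a hyperplane of the ambient space and only its trace on the relevant subgeometry is controlled.

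The main obstacle I anticipate is bookkeeping around the polar structure rather than anything conceptually new: one must check that the subspaces produced by Lemma \ref{lem:char_by_quotient_case_pt} and Lemma \ref{lem:char_by_quotient_case_pt_pl} (those lemmas are stated for $J_q(n,k)$) are actually isotropic / realizable inside $\PP_q(n,e)$, so that the witnesses $K$ used there can be taken inside the polar space. This is where the hypotheses $k, n-k \geq 2$ and $n \geq 5$ (or $n=4, q=2$) are used — they guarantee enough room in $p^\perp/p$ and in the various quotients for all the required $k$-spaces to exist as isotropic subspaces, and they are exactly the ranges in which the base cases (Lemma \ref{lem:polar_small_dim_base_case} and Theorem \ref{thm:k_eq_2_results} via the coordinate-induced embeddings of \S\ref{ch:polar}) are available. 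A secondary, purely organizational point is to make sure the chain argument is compatible with part (b): at each link the hyperplane is only determined up to its trace, but since we only claim existence of \emph{some} $\tilde\pi_T$ this causes no trouble.
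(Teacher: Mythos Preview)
Your chain-through-adjacent-maximals reduction is exactly what the paper does, but the core transfer step has a genuine gap. Lemmas~\ref{lem:char_by_quotient_case_pt} and~\ref{lem:char_by_quotient_case_pt_pl} are not deduction-from-local-data lemmas; their hypothesis is that $f$ is already a \emph{trivial} Boolean degree~$1$ function on a Grassmann scheme, and the conclusion identifies which one. When you pass to $f_a$ for $a \in M = S \cap T$, the quotient $a^\perp/a$ is a polar space $\PP_q(n-1,e)$, so $f_a$ lives on $C_q(n-1,k-1,e)$. At this point in the argument nothing tells you that Boolean degree~$1$ functions on $C_q(n-1,k-1,e)$ are trivial --- that is precisely the theorem being built --- so you cannot feed $f_a$ into those lemmas. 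The isotropy-of-witnesses issue you flag is real but secondary; the primary problem is that you have no classification available in the quotient.

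The paper avoids this by never leaving projective space: it works with $f_T$, the restriction to the maximal $T$, which is a Boolean degree~$1$ function on $J_q(n,k)$. The hypotheses on $n,k,q$ are exactly those of Theorem~\ref{thm:grassmann_class}, so $f_T$ is trivial. Then, since $M = S \cap T$ is a hyperplane of $T$ and the restriction $f_M$ (computed from $f_S$) is $p^+$ or $p^+ \lor \tilde\pi^+$, the \emph{dual} forms of Lemmas~\ref{lem:antiflag_compl} and~\ref{lem:hyp_compl} (restrict to a hyperplane rather than quotient by a point) pin down $f_T$ as $p^+$ or $p^+ \lor \pi_T^+$. So the missing ingredient in your plan is the invocation of the Grassmann classification on $T$ itself; once you add that, the argument should be run with restriction-to-$M$ (a hyperplane inside a projective space) rather than quotient-by-$a$ (which escapes to a polar space).
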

\begin{proof}
  As the maximals in quotient of $p$ are connected with respect to intersecting in a hyperplane, it is sufficient to show
  that $p^+ \rightarrow f_T$ for $\dim(S \cap T) = n-1$.
  As $S \cap T \subseteq S$, $f_{S \cap T} = p^+$ or $f_{S \cap T} = p^+ \lor \tilde{\pi}^+$ 
  for some hyperplane $\tilde{\pi}$ of $S \cap T$.
  By Theorem \ref{thm:grassmann_class}, $f_T$ is trivial.
  The dual versions of Lemma \ref{lem:antiflag_compl} and Lemma \ref{lem:hyp_compl} imply that
  the only trivial Boolean degree $1$ function $f'$ in $J_q(n, k)$ which contains 
  an $(n-1)$-space $R$ with $f_R' = p^+$, respectively, $f_R' = p^+ \lor \tilde{\pi}^+$,
  is $p^+$, respectively, $p^+ \lor \pi^+$, where $\tilde{\pi} \subseteq \pi$.
\end{proof}

We can do the following two step process:
\begin{enumerate}[(1)]
\item By Lemma \ref{lem:polar_pt_trans}, $p^+ \rightarrow f_S$, $f_S \neq 1$, implies that $p^+ \rightarrow f_{p^\perp}$ and $f_{p^\perp} \neq 1$.
Hence, we can replace $f$ by $f \land p^-$ and still have a Boolean degree $1$ function.
As this reduces $|f|$ in every step, we can do this till no such $S$ occurs anymore.
\item By Lemma \ref{lem:polar_pt_trans} (and taking complements), $p^- \rightarrow f_S$, $f_S \neq 0$, 
imply that $p^- \rightarrow f_{p^\perp}$ and $f_S \neq 0$.
Again, we can replace $f$ by $f \lor p^+$ till no such $S$ occurs anymore.
\end{enumerate}
If there is no maximal $S$ left such that $f_S$ is not constant and $p^\pm \rightarrow f_S$,
then we call $f$ \textit{reduced}.
On the restriction to $J_q(n, k)$ it is easily seen that all $p$'s used in the process are disjoint:
suppose that $f_S = p^+$ before Step (1) and $f_S = \tilde{p}^-$ after Step (1).
But first we had $f_S = p^+$, so afterwards we have $f_S = 0 \neq \tilde{p}^-$.
Similarly, $f_S = p^+ \lor \tilde{\pi}^+$ before Step (1) implies $f_S = \tilde{\pi}^+$
after Step (1). Again, this is different from $f_S = \tilde{p}^-$.
Hence, a non-constant trivial reduced Boolean degree $1$ function is $\pi^\pm$ 
for some hyperplane $\pi$.

We repeatetly use the following property:
\begin{lemma}\label{lem:reduced_and_trivial}
  Let $k, n-k \geq 2$. Let $f$ be a reduced Boolean degree $1$ function on $C_q(n, k, e)$
  and suppose that $f_S$ is trivial for all maximals $S$. 
  \begin{enumerate}[(a)]
   \item If all Boolean degee $1$ functions $g$ induced by $f$ on $C_q(n-1, k, e)$ are trivial,
   then $g$ is reduced.
   \item If all Boolean degee $1$ functions $g$ induced by $f$ on $C_q(n-1, k-1, e)$ are trivial,
   then $g$ is reduced.
  \end{enumerate}
\end{lemma}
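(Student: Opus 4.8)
The plan is to reduce the statement to a finite, local fact about trivial functions on Johnson graphs, and to dispose separately of one problematic family of restrictions by a propagation argument.

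First I would unwind the definition of ``reduced''. A Boolean degree $1$ function $g$ on a rank-$(n-1)$ polar graph $C_q(n-1,k',e)$ (with $k'=k$ in case (a) and $k'=k-1$ in case (b)) is reduced precisely when, for every maximal $\bar S$, the restriction $g_{\bar S}$ is constant or has no forcing point, i.e.\ there is no point $\bar p$ and sign $\varepsilon$ with $\bar p^{\varepsilon}\to g_{\bar S}$. So assume for contradiction that some induced $g$ (trivial by hypothesis) fails to be reduced, witnessed at a maximal $\bar S$ by a pair $(\bar p,\varepsilon)$. The key point is that $\bar S$ lifts to a maximal $S$ of $\PP_q(n,e)$: in case (b) we have $g=f_p$ for a point $p$ and $\bar S=S/p$ for some maximal $S\ni p$, and $g_{\bar S}$ is exactly the restriction of $f_S$ to the $k$-spaces through $p$ inside $S$, read in the quotient $S/p\cong J_q(n-1,k-1)$; case (a) is the analogous hyperplane-section restriction, with $\bar S$ contained in a maximal $S$ and $g_{\bar S}$ a coordinate-restriction of $f_S$. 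Since $f$ is reduced and $f_S$ is trivial by hypothesis, $f_S$ is a trivial degree-$1$ function on $J_q(n,k)$ with no forcing point.

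Next I would run the local check. By Theorem \ref{thm:johnson-deg1_intro} and the list of trivial Johnson functions ($1^\pm,p^\pm,\pi^\pm,(p\lor\pi)^\pm$), a one-line inspection records which are forced: $p^\pm$ and $(p\lor\pi)^+$ are forced, $\pi^-$ is forced by any point outside $\pi$, while $\pi^+$ and $(p\lor\pi)^-$ are not. Thus $f_S\in\{0,1,\pi^+,(p\lor\pi)^-\}$. For the harmless types the restriction is immediate: $\pi^+$ restricted by a point $p$ equals $0$ when $p\notin\pi$ and $(\pi/p)^+$ (again unforced) when $p\in\pi$; and $(p_0\lor\pi_0)^-$ restricted by $p=p_0$ equals $0$, while restricted by $p\in\pi_0\setminus\{p_0\}$ it equals $\bigl(\bar p_0\lor(\pi_0/p)\bigr)^-$ with $\bar p_0\notin\pi_0/p$, again an unforced type. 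In all these cases $g_{\bar S}$ is constant or unforced, contradicting the witness $(\bar p,\varepsilon)$.

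The remaining, genuinely delicate case — and what I expect to be the main obstacle — is $f_S=(p_0\lor\pi_0)^-$ restricted by a point $p$ with $p\neq p_0$ and $p\notin\pi_0$ (and its analogue in case (a)): there the restriction collapses to $\bar p_0^-$, which \emph{is} forced, so the naive computation would break the lemma. To close this one must show that such an $f_S$ cannot occur for a reduced $f$ whose maximal-restrictions are all trivial. The route I would take: applying Lemma \ref{lem:polar_pt_trans} to $1-f$ propagates $f_S=(p_0\lor\pi_0)^-$ to $f_T=(p_0\lor\tilde\pi_T)^-$ for every maximal $T\ni p_0$, so $f$ vanishes on all $k$-spaces through $p_0$; then, examining a maximal $S'$ meeting $S$ in an $(n-1)$-space $L$ with $p_0\notin L$ and $L\nsubseteq\pi_0$, the restriction $f_{S'}|_L$ has type $\pi^-$, and — using triviality of $f_{S'}$, reducedness, and dimension counts of the kind in Lemmas \ref{lem:char_by_quotient_case_pt}--\ref{lem:char_by_quotient_case_pt_pl} — one forces $f_{S'}$ itself into a shape incompatible with $f$ being reduced. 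The conclusion is that $f_S\in\{0,1,\pi^+\}$ at every maximal $S$, and for these forms every relevant restriction is constant or unforced, whence $g$ is reduced; part (a) is identical, with the hyperplane-section restriction in place of the point quotient. The hard part is precisely this structural restriction on the possible $f_S$ (getting the propagation and the intermediate dimension counts right); the rest is routine bookkeeping.
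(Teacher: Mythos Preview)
Your argument rests on a misreading of ``reduced''. You take the phrase ``$p^\pm\to f_S$'' literally and conclude that the non-constant reduced restrictions are $\pi^+$ and $(p\lor\pi)^-$, excluding $\pi^-$ (forced by any point outside~$\pi$) and admitting $(p\lor\pi)^-$ (which indeed has no forcing point). But the reduction process in the paper is justified solely through Lemma~\ref{lem:polar_pt_trans}, which applies only when $f_S$ has the shape $p^+$ or $p^+\lor\pi^+$ (and, by complementation, $p^-$ or $(p\lor\pi)^-$). The sentence immediately preceding the lemma makes this explicit: ``a non-constant trivial reduced Boolean degree~$1$ function is~$\pi^\pm$''. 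So the operative meaning of ``reduced'' is that no $f_S$ lies in $\{p^\pm,(p\lor\pi)^\pm\}$, giving $f_S\in\{0,1,\pi^+,\pi^-\}$ --- the opposite of your list on the two disputed items.

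With the correct list the proof is a two-line check, which is exactly what the paper does: restricting $\pi^+$ to a hyperplane $T\subseteq S$ yields $1$ (if $T=\pi$) or $(T\cap\pi)^+$; restricting $\pi^-$ yields $0$ or $(T\cap\pi)^-$. Hence every maximal $T$ of the rank-$(n-1)$ space again has $g_T\in\{0,1,\pi'^+,\pi'^-\}$, which is precisely the condition for $g$ to be reduced. Part~(b) is the identical computation in the quotient by a point.

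Your ``genuinely delicate case'' $f_S=(p_0\lor\pi_0)^-$ is therefore a phantom: it is excluded by reducedness at the outset, and the propagation argument you sketch is not needed. It is also not complete as written: after passing to a neighbouring maximal $S'$ through a hyperplane $L$ with $p_0\notin L$ and $L\nsubseteq\pi_0$, the trivial functions on $S'$ compatible with $f_{S'}|_L=(L\cap\pi_0)^-$ include both $\tilde\pi^-$ and $(\tilde p\lor\tilde\pi)^-$, so you have not broken the cycle --- the second shape is exactly the one you are trying to eliminate. Once you adopt the paper's intended notion of reduced, the whole difficulty evaporates.
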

\begin{proof}
  We do know that $f_S$ is trivial and reduced, so $f_S \in \{ 0, 1, \pi^+, \pi^-\}$
  for some hyperplane of $\pi$ of $S$. For $k, n-k \geq 2$, this implies $f_T \in \{ 0, 1, \pi^+, \pi^- \}$
  for every hyperplane $T$ of $S$. Hence, $g_T$ is reduced for all maximals of $C_q(n-1, k, e)$.
  Hence, $g$ is reduced by Lemma \ref{lem:polar_pt_trans}. This shows (a). Part (b) follows similarly.
\end{proof}

\begin{lemma}\label{lem:polar_hyperplane_small_dim}
  Let $n-1 > k > 1$.
  Let $f$ be a reduced trivial Boolean degree $1$ function on $C_q(n, k, e)$.
  Let $\pi$ be a hyperplane of the ambient space of $\PP_q(n, e)$, and fix an isotropic point $a \in \pi$.
  Suppose that all $k$-spaces $K$ through $a$ satisfy
  \begin{enumerate}[(a)]
   \item $f(K) = 1$ if $K \subseteq \pi$,
   \item $f(K) = 0$ otherwise.
  \end{enumerate}
  Then $f = \pi^+$.
\end{lemma}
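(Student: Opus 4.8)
The strategy mirrors the Grassmann analogue Lemma~\ref{lem:char_by_quotient_case_pl}, but is much shorter because the hypothesis that $f$ is \emph{reduced} collapses the list of candidates: by the remark just before Lemma~\ref{lem:reduced_and_trivial}, a non-constant reduced trivial Boolean degree~$1$ function on $C_q(n,k,e)$ equals $\tau^{+}$ or $\tau^{-}$ for a single hyperplane $\tau$ of the ambient space. So I would proceed in three steps: (1) show $f$ is non-constant, hence $f=\tau^{\pm}$; (2) rule out the sign $-$; (3) identify $\tau$ with $\pi$. Throughout I use standard facts about (possibly degenerate) sections of polar spaces: a hyperplane section of $\PP_q(n,e)$ has rank at least $n-1$, a codimension-$2$ section has rank at least $n-2$, the isotropic points of such a section span it, and through any isotropic point of a polar space of rank $r$ there is an isotropic $j$-space for every $j\le r$. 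Since $n-1>k$, every rank occurring below is at least $n-2\ge k$.

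For step~(1): the section $\PP_q(n,e)\cap\pi$ has rank $\ge n-1>k$ and contains $a$, so it contains an isotropic $k$-space $K$ with $a\in K\subseteq\pi$; then $f(K)=1$ by (a), so $f\ne 0$. Also $\pi\ne a^{\perp}$ (the hyperplane $a^{\perp}$ is degenerate), so $(\pi\cap a^{\perp})/a$ is a proper hyperplane of $W:=a^{\perp}/a$; an isotropic $k$-space through $a$ lies in $a^{\perp}$ and corresponds to an isotropic $(k-1)$-space of $W$, and since these do not all lie in $(\pi\cap a^{\perp})/a$, there is an isotropic $k$-space $K'$ with $a\in K'$ and $K'\not\subseteq\pi$; then $f(K')=0$ by (b), so $f\ne 1$. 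Hence $f=\tau^{+}$ or $f=\tau^{-}$. For step~(2), suppose $f=\tau^{-}$. From $f(K)=1$ we get $K\not\subseteq\tau$, and from $f(K')=0$ we get $K'\subseteq\tau$, so $a\in\tau$; combining $f=\tau^{-}$ with (a) and (b) then says that every isotropic $k$-space through $a$ lies in exactly one of $\pi,\tau$. If $\pi=\tau$ this already fails for $K$; if $\pi\ne\tau$, then $\PP_q(n,e)\cap(\pi\cap\tau)$ has rank $\ge n-2\ge k$ and contains $a$, hence contains an isotropic $k$-space through $a$ lying in both $\pi$ and $\tau$, a contradiction. Therefore $f=\tau^{+}$.

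For step~(3), conditions (a),(b) now assert that for every isotropic $k$-space $K$ through $a$ one has $K\subseteq\pi\iff K\subseteq\tau$; applying this to $K$ from step~(1) gives $a\in\tau$, and then $\tau\ne a^{\perp}$ (otherwise all these $K$ lie in $\tau$, hence in $\pi$, forcing $a^{\perp}\subseteq\pi$). Passing to $W=a^{\perp}/a$, the images $\bar\pi,\bar\tau$ of $\pi\cap a^{\perp},\tau\cap a^{\perp}$ are hyperplanes of the ambient space of $W$, and $\langle a,M\rangle$ (with $M$ an isotropic $(k-1)$-space of $W$) lies in $\pi$, resp.\ $\tau$, iff $M\subseteq\bar\pi$, resp.\ $M\subseteq\bar\tau$; since the isotropic $(k-1)$-spaces of $W$ contained in $\bar\pi$ span $\bar\pi$, and likewise for $\bar\tau$, the biconditional forces $\bar\pi=\bar\tau$, i.e.\ $\pi\cap a^{\perp}=\tau\cap a^{\perp}$. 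I expect the main obstacle to be upgrading this last equality to $\pi=\tau$ on the nose: the residue at the single point $a$ only determines $\tau\cap a^{\perp}$ (all hyperplanes through the codimension-$2$ space $\pi\cap a^{\perp}$ induce the same function on $k$-spaces through $a$), so one must bring in more, either by invoking the hypothesis again at a second isotropic point $b\in\pi$ with $b^{\perp}\cap\pi$ spanning $\pi$ together with $a^{\perp}\cap\pi$, or by using the extra information available when the lemma is applied — for instance that $f_{S}$ is the hyperplane function of $S\cap\pi$ for some maximal $S\not\subseteq a^{\perp}$, as furnished by Lemma~\ref{lem:polar_pt_trans} and the reduction preceding Lemma~\ref{lem:reduced_and_trivial}. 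Once $\pi=\tau$ is established, $f=\pi^{+}$. The remaining work is the routine but somewhat tedious verification of the incidence claims above for the degenerate sections of the various families $\PP_q(n,e)$.
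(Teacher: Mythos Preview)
Your steps (1) and (2) are fine and coincide with the paper's route: reduce to $f=\tau^{\pm}$ using the characterisation of reduced trivial functions, rule out the constants, and rule out $\tau^{-}$ via an isotropic $k$-space through $a$ inside $\pi\cap\tau$. For the remaining case $f=\tilde\pi^{+}$ with $\tilde\pi\neq\pi$, the paper asserts directly the existence of a maximal isotropic subspace $S$ of $\pi\cap\PP_q(n,e)$ through $a$ with $S\not\subseteq\tilde\pi$ (justified by the parenthetical ``as $\tilde\pi$ is a proper subspace of $a^\perp$''), and finishes by picking an isotropic $k$-space through $a$ in $S\setminus\tilde\pi$.

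The obstacle you flag in step (3) is genuine, and the paper's argument does not circumvent it. Any totally isotropic $S\ni a$ satisfies $S\subseteq a^{\perp}$; so when $\tilde\pi\cap a^{\perp}=\pi\cap a^{\perp}$ (which happens for the $q-1$ hyperplanes in the pencil through $\pi\cap a^{\perp}$ other than $\pi$ and $a^{\perp}$) one gets $S\subseteq a^{\perp}\cap\pi=a^{\perp}\cap\tilde\pi\subseteq\tilde\pi$, and the paper's $S$ cannot exist. In fact such an $f=\tilde\pi^{+}$ is a reduced trivial Boolean degree~$1$ function satisfying (a)--(b) verbatim (every isotropic $K\ni a$ lies in $a^{\perp}$, hence $K\subseteq\pi\iff K\subseteq\tilde\pi$), yet $f\neq\pi^{+}$ since the isotropic $k$-spaces of $\pi\cap\PP_q(n,e)$ do not all lie in the proper subspace $a^{\perp}\cap\pi$ of $\pi$. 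So the statement is not provable as written; what both your argument and the paper's actually establish is $f=\tau^{+}$ for some hyperplane $\tau$ with $\tau\cap a^{\perp}=\pi\cap a^{\perp}$, and your suggested remedy of varying the base point is precisely how the lemma's only application (Theorem~\ref{thm:polar_inductive_step_inc_k}) proceeds in any case.
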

\begin{proof}
  As not all $k$-spaces $K$ through $a$ satisfy $f(K) = 1$, we have $f \notin \{ 0, 1 \}$.
  As $f$ is reduced, trivial, and $k < n$, the only options left are $f = \tilde{\pi}^\pm$ 
  for some hyperplane $\tilde{\pi}$ of the ambient space.
  
  Suppose that $a \notin \tilde{\pi}$. Then no isotropic $k$-space through $a$ lies in $\tilde{\pi}$.
  Hence, if $f = \tilde{\pi}^+$, $f(K) = 0$ for isotropic $k$-spaces $K$ through $a$,
  and if $f = \tilde{\pi}^-$, $f(K) = 1$ for isotropic $k$-spaces $K$ through $a$.
  As $f_a$ is not constant, this is a contradiction. Hence, $a \in \tilde{\pi}$.
  
  Suppose that $f = \tilde{\pi}^-$. Let $S$ be a maximal through $a$.
  As $\dim(\tilde{\pi} \cap \pi \cap S) \geq \dim(S)-2 = n-2 \geq k$,
  there exists an isotropic $k$-space $K$ through $a$ which lies in $\tilde{\pi}$ and $\pi$.
  As $a \in K \subseteq \pi$, $f(K) = 1$. As $f = \tilde{\pi}^-$, $f(K) = 0$ which is a contradiction.
  
  Suppose that $f = \tilde{\pi}^+$ with $\pi \neq \tilde{\pi}$.
  Then there exists a maximal isotropic subspace $S$ of $\pi \cap \PP_q(n, e)$ (which can have dimension $n$ or $n-1$)
  through $a$ in $\pi$ which is not contained in $\tilde{\pi}$ (as $\tilde{\pi}$ is a proper subspace pf $a^\perp$).
  As $\dim(S) \geq k-1$, there exists an isotropic $k$-space $K$ in $S$ through $a$ with $K \nsubseteq \tilde{\pi}$.
  As $a \in K \subseteq S \subseteq \pi$, $f(K) = 1$. As $f = \tilde{\pi}^+$, $f(K) = 0$.
  This is a contradiction. Hence, $\pi = \tilde{\pi}$.
\end{proof}

\begin{theorem}\label{thm:polar_inductive_step_inc_k}
  Let $k, n-k \geq 2$.
  If all Boolean degree $1$ functions on $C_q(n, k, e)$ and $J_q(n+1, k+1)$ are trivial,
  then all Boolean degree $1$ functions on $C_q(n+1, k+1, e)$ are trivial.
\end{theorem}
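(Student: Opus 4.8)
The plan is to adapt the proof of Theorem~\ref{thm:grassmann_inductive_step} to the polar setting, using the two natural restrictions of a Boolean degree~$1$ function $f$ on $C_q(n+1,k+1,e)$; note that $n+1\ge 5$ since $k,n-k\ge 2$. For a maximal $S$, the restriction $f_S$ to the $(k+1)$-subspaces of $S$ is a Boolean degree~$1$ function on $J_q(n+1,k+1)$, hence trivial by hypothesis; and for an isotropic point $a$, the quotient $a^{\perp}/a$ is a polar space of rank $n$ and parameter $e$, so the restriction $f_a$ to the $(k+1)$-spaces through $a$ is a Boolean degree~$1$ function on $C_q(n,k,e)$, hence trivial by hypothesis. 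Since every $f_S$ is trivial, the two-step reduction process discussed after Lemma~\ref{lem:polar_pt_trans} applies here as well (its proof uses only triviality of the $f_S$, which we now have from the hypothesis on $J_q(n+1,k+1)$ in place of the appeal to Theorem~\ref{thm:grassmann_class}), so we may replace $f$ by a reduced Boolean degree~$1$ function and reconstruct it afterwards from pairwise non-collinear points. It thus suffices to prove that every reduced Boolean degree~$1$ function on $C_q(n+1,k+1,e)$ equals $0$, $1$, $\pi^{+}$, or $\pi^{-}$ for a hyperplane $\pi$ of the ambient space.

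So let $f$ be reduced. Since $k+1<n+1$, checking the trivial Boolean degree~$1$ functions on $J_q(n+1,k+1)$ shows that, up to complementing $f$, each $f_S$ is $0$, $1$, or of hyperplane form $\pi_S^{+}$, the point- and antiflag-type ones being excluded by reducedness. An elementary connectivity argument on the dual polar graph --- using $\dim(S\cap S')=n$ and $n-k\ge 2$ so that $S\cap S'$ and its intersections with the hyperplanes involved still contain $(k+1)$-spaces, and comparing $f$ on such spaces --- shows that the value $0$, and symmetrically $1$, at one maximal propagates to all maximals, and that the signs of the $\pi_S$ all agree; hence either $f$ is a global constant (and we are done) or every $f_S=\pi_S^{+}$ for a genuine hyperplane $\pi_S$ of $S$. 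The same propagation shows each $f_a$ is itself reduced, so $f_a\in\{0,1,\tau_a^{+},\tau_a^{-}\}$ for a hyperplane $\tau_a$ of $a^{\perp}/a$.

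It remains to glue the local hyperplanes into one hyperplane $\pi$ of the ambient space $\FF_q^m$ with $f=\pi^{+}$. For this I would fix an isotropic point $a$ with $f_a$ non-constant (any $a$ inside $\pi_{S_0}$ for a fixed maximal $S_0$ works, since $\dim(\pi_{S_0}/a)=n-1>k$), note that consistency with $f_{S_0}$ forces $f_a=\tau^{+}$ for the corresponding hyperplane $\tau$ of $a^{\perp}/a$, lift $\tau$ to a hyperplane $\pi$ of $\FF_q^m$ through $a$ meeting $a^{\perp}$ in the preimage of $\tau$ (so that $f(K)=1$ exactly for those $(k+1)$-spaces $K$ through $a$ contained in $\pi$), and then invoke Lemma~\ref{lem:polar_hyperplane_small_dim}, whose hypotheses $(n+1)-1>k+1>1$ hold because $n-k\ge 2$; this yields $f=\pi^{+}$. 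I expect the main obstacle to be precisely the input Lemma~\ref{lem:polar_hyperplane_small_dim} requires, that $f$ is already trivial: promoting the local data ``$f_S=\pi_S^{+}$ for every maximal, compatible on overlaps'' to genuine triviality of $f$ in the sense of Conjecture~\ref{conj:polar_main}, and ruling out that the $\pi_S$ could patch into a degenerate hyperplane section or into globally incompatible choices, is the technical heart of the argument, and is what the connectivity bookkeeping of the second paragraph is there to supply. The complemented versions follow by applying all of this to $1-f$.
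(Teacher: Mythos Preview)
Your proposal has a genuine gap, and you have correctly identified it yourself: you try to invoke Lemma~\ref{lem:polar_hyperplane_small_dim} on the global function $f$, but that lemma takes as hypothesis that $f$ is already a \emph{trivial} reduced Boolean degree~$1$ function, which is exactly the conclusion you are trying to establish. The ``connectivity bookkeeping'' on the dual polar graph does not close this circle. Knowing that every $f_S$ has the form $\pi_S^{+}$ and that the $\pi_S$ agree on overlaps tells you nothing, a priori, beyond the fact that $f$ restricts nicely to maximals; it does not promote $f$ to a function of the form described in Conjecture~\ref{conj:polar_main}, and without that you cannot feed $f$ into Lemma~\ref{lem:polar_hyperplane_small_dim}. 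There is also a secondary ambiguity: your lift of $\tau$ to a hyperplane $\pi$ of $\FF_q^m$ is not unique (there are $q$ hyperplanes through the preimage of $\tau$ other than $a^{\perp}$), so even if the lemma applied you would not know which $\pi$ to expect.

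The paper sidesteps the circularity by applying Lemma~\ref{lem:polar_hyperplane_small_dim} one rank down, to $f_b$ rather than to $f$. Concretely: after reducing, pick an isotropic point $a$ with $f_a$ non-constant, so $f_a=\pi^{+}$ for a hyperplane $\pi$ through $a$. For any isotropic $b\in a^{\perp}\cap\pi$, the function $f_b$ lives on $C_q(n,k,e)$ and is therefore trivial by hypothesis (and reduced by Lemma~\ref{lem:reduced_and_trivial}); restricting $f_a=\pi^{+}$ to $(k+1)$-spaces through $\langle a,b\rangle$ supplies exactly the hypothesis of Lemma~\ref{lem:polar_hyperplane_small_dim} for $f_b$ with point $\langle a,b\rangle/b$ and hyperplane $\pi/b$, whence $f_b=\pi^{+}$. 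Since any two isotropic points of $\pi$ have a common perpendicular isotropic neighbour in $\pi$ (using $n\ge 2$), this propagates to give $f_c=\pi^{+}$ for every isotropic $c\in\pi$. Finally, for isotropic $b\notin\pi$ one rules out $f_b\in\{1,\tilde\pi^{+},\tilde\pi^{-}\}$ by exhibiting specific $(k+1)$-spaces through $b$ and a suitable $c\in\pi$ (or $c\in\pi\setminus\tilde\pi$), forcing $f_b=0$. Together these give $f=\pi^{+}$. The point is that the hypothesis on $C_q(n,k,e)$ is used to make each $f_b$ trivial, so that Lemma~\ref{lem:polar_hyperplane_small_dim} is invoked only where its triviality assumption is already in hand; your route via the $f_S$ never puts you in that position.
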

\begin{proof}
  If $f_a \in \{ 0, 1 \}$ for all points $a$ of $\PP_q(n+1, e)$, 
  then $f$ is constant and we are done.
  Hence, there exists a point $a$ such that $f_a$ is not constant.
  Furthermore, we can assume that $f$ is reduced, so $f_a = \pi^\pm$
  for some hyperplane through $a$.
  
  By considering $1-f$ instead of $f$, we can without loss of generality assume that $f_a = \pi^+$.
  First let $b$ be an isotropic point in $a^\perp \cap \pi$.
  Set $\ell = \langle a, b \rangle$. 
  As all Boolean degree $1$ functions on $C_q(n, k, e)$ are trivial, $f_b$ is trivial,
  and by Lemma \ref{lem:reduced_and_trivial}, reduced.
  If $b \in \pi$, then, by Lemma \ref{lem:polar_hyperplane_small_dim} and the assumption that
  all Boolean degree $1$ functions on $C_q(n, k, e)$ are trivial, $f_b = \pi^+$.
  As for $n \geq 2$ all pairs of points $(p_1, p_2)$ in $\pi$ have a common neighbour in $\pi$
  (that is $p_1^\perp \cap p_2^\perp \cap \pi$ contains an isotropic point), this implies that
  $f_c = \pi^+$ for all isotropic $c \in \pi$.
  
  Now let $b$ be an isotropic point not in $\pi$. 
  By Lemma \ref{lem:reduced_and_trivial}, $f_b \in \{ 0, 1, \tilde{\pi}^+, \tilde{\pi}^- \}$.
  We want to show that $f_b = 0$ by ruling out the three other cases.
  As $n \geq 4$, $b^\perp \cap \pi \cap \tilde{\pi}$ contains an isotropic point $c$.
  Set $\ell = \langle b, c \rangle$.
  As $f_c = \pi^+$ and $b \notin \pi$, we have $f_{\ell} = 0$,
  so we can rule out that $f_b = 1$. As $c^\perp \cap \tilde{\pi}$ is a hyperplane of $\tilde{\pi}$
  and $n - 2 \geq k$,
  we find an isotropic $k$-space $L$ in  $\tilde{\pi}$ which contains $b$ and $c$.
  As $f_{\ell}(L) = 0$, we rule out $f_{b} = \tilde{\pi}^+$.
  Choosing $c \in \pi \setminus \tilde{\pi}$ instead of $\pi \cap \tilde{\pi}$
  rules out $f_b = \tilde{\pi}^-$ with a similar argument.
  Hence, $f_b = 0$.
  
  This concludes that $f = \pi^+$.
\end{proof}

\begin{lemma}\label{lem:polar_from_a_to_aperp}
  Let $k, n-k-1 \geq 2$.
  Let $f$ be a reduced Boolean degree $1$ function on $C_q(n+1, k, e)$
  such that for all non-degenerate hyperplanes in $\tau \subseteq a^\perp$ the function $f_\tau$ is trivial.
  Then $f_{a^\perp} \in \{ 0, 1, \pi^+, \pi^- \}$, where $\pi^+$ is some hyperplane of $a^\perp$.
\end{lemma}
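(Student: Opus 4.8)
The plan is to determine $f_{a^\perp}$ by splitting the isotropic $k$-spaces of the cone $a^\perp\cap\PP_q(n+1,e)$ into those that contain $a$ and those that avoid $a$. Recall that $a^\perp$ is a hyperplane of the ambient space, that $a^\perp\cap\PP_q(n+1,e)$ is the cone with vertex the isotropic point $a$ over a non-degenerate polar space of rank $n$, and that the non-degenerate hyperplanes $\tau\subseteq a^\perp$ are precisely the complements of $\langle a\rangle$ in $a^\perp$, each satisfying $\tau\cap\PP_q(n+1,e)\cong\PP_q(n,e)$. A $k$-space $K\subseteq a^\perp$ with $a\notin K$ is contained in some such $\tau$ (extend $K$ to a complement of $\langle a\rangle$), while a $k$-space $K\subseteq a^\perp$ with $a\in K$ lies in some maximal of the cone, and every maximal of the cone is a maximal of $\PP_q(n+1,e)$ through $a$.

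For the $k$-spaces avoiding $a$: each $f_\tau$ is a Boolean degree $1$ function on $C_q(n,k,e)$, trivial by hypothesis. I would first upgrade this to $f_\tau\in\{0,1,\rho_\tau^+,\rho_\tau^-\}$ for a hyperplane $\rho_\tau$ of $\tau$ by showing that $f_\tau$ is also reduced: writing $\tau=a^\perp\cap b^\perp$ with $\langle a,b\rangle=\tau^\perp$ a hyperbolic line exhibits $\tau$ as a codimension-$1$ section inside the cone $b^\perp\cap\PP_q(n+1,e)$, so a codimension-$2$ version of Lemma~\ref{lem:reduced_and_trivial} applies and rules out all but the four listed shapes (since $k<n$, the hyperplane clauses of a trivial function on $C_q(n,k,e)$ collapse to at most one). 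Then I would glue across the $\tau$'s: two distinct complements of $\langle a\rangle$ meet in a subspace of rank at least $n-1\ge k+2$, which therefore contains many isotropic $k$-spaces, on which $f$ is computed by both $f_\tau$ and $f_{\tau'}$; a compatibility analysis in the spirit of Lemma~\ref{lem:johnson-deg1-pair} then forces either all $f_\tau=0$, or all $f_\tau=1$, or the existence of a single hyperplane $\pi$ of $a^\perp$ with $f_\tau=(\pi\cap\tau)^+$ for every $\tau$ (and the complemented versions). In every case, $f$ agrees on the $k$-spaces of the cone avoiding $a$ with one of $0,1,\pi^+,\pi^-$; replacing $f$ by $1-f$ if necessary, assume it agrees there with $0$ or with $\pi^+$.

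For the $k$-spaces through $a$: let $S$ be a maximal of the cone, so $a\in S$ and $S$ is a maximal of $\PP_q(n+1,e)$. Then $f_S$ is a Boolean degree $1$ function on $J_q(n+1,k)$, which is trivial by Theorem~\ref{thm:grassmann_class} and, since $f$ is reduced, reduced; hence $f_S\in\{0,1,\rho_S^+,\rho_S^-\}$. Because $k\le n-1<\dim S$, the space $S$ contains $k$-spaces avoiding $a$, and on these $f_S$ agrees with the already-determined value, namely $0$, respectively the indicator of containment in $S\cap\pi$; since the $k$-spaces of $S$ avoiding $a$ already determine a hyperplane of $S$ (with $S\subseteq\pi$, possible only when $a\in\pi$, forcing $f_S=1$), this pins down $f_S=0$, respectively $f_S=\pi^+$ on $S$. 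As the maximals of the cone cover all isotropic $k$-spaces of $a^\perp$ through $a$, we obtain $f(K)=0$ for every isotropic $k$-space $K\subseteq a^\perp$, respectively $f(K)=1_{K\subseteq\pi}$ for every such $K$; that is, $f_{a^\perp}\in\{0,1,\pi^+,\pi^-\}$.

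The main obstacle is the first step. Upgrading ``$f_\tau$ trivial'' to ``$f_\tau$ reduced-trivial'' requires the codimension-$2$ refinement of Lemma~\ref{lem:reduced_and_trivial} obtained by routing through the hyperbolic line $\tau^\perp$, and the subsequent gluing — showing that the hyperplanes $\rho_\tau$, as $\tau$ ranges over all complements of $\langle a\rangle$ in $a^\perp$, are the traces of a single hyperplane $\pi$ of $a^\perp$ — is the combinatorial heart, the polar-space counterpart of the ``sudoku'' bookkeeping in Lemma~\ref{lem:johnson-deg1-pair} and its Grassmann analogue. By contrast, the propagation to $k$-spaces through $a$ is routine, using only the dual forms of Lemma~\ref{lem:antiflag_compl}, Lemma~\ref{lem:point_compl} and Lemma~\ref{lem:hyp_compl} together with the fact that $f$ is reduced.
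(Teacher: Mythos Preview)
Your proposal is correct and follows essentially the same architecture as the paper: first determine $f$ on the isotropic $k$-spaces of $a^\perp$ avoiding $a$ by gluing the trivial (and reduced) functions $f_\tau$ across the non-degenerate hyperplanes $\tau$ of $a^\perp$, then propagate to $k$-spaces through $a$ using the maximals of the cone and the reducedness of $f$.

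One small remark on the reducedness step. Your route through $b^\perp$ (with $\langle a,b\rangle=\tau^\perp$ hyperbolic and $b$ isotropic) lands you inside another \emph{degenerate} cone, so invoking a ``codimension-$2$ version of Lemma~\ref{lem:reduced_and_trivial}'' via $b^\perp$ is not the cleanest path. The more direct argument, which is also what underlies the paper's (somewhat elliptic) citation of Lemma~\ref{lem:reduced_and_trivial}, goes straight through maximals: every maximal $T$ of $\tau\cong\PP_q(n,e)$ satisfies $\langle a,T\rangle=S$ a maximal of $\PP_q(n+1,e)$, and since $f$ is reduced we have $f_S\in\{0,1,\rho_S^+,\rho_S^-\}$; restricting to the hyperplane $T\subseteq S$ (using $k,\,(n+1)-k\ge 2$) still gives $f_T\in\{0,1,\rho^+,\rho^-\}$, whence $f_\tau$ is reduced. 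The gluing you describe is then exactly the paper's Case~1/Case~2 analysis, and your handling of $k$-spaces through $a$ matches the paper's final paragraph invoking Corollary~\ref{cor:hyp_hull}.
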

\begin{proof}
  We first show the claim for $f_{a^\perp \setminus a}$ instead of $f_{a^\perp}$.
  Here we assume that $f_{a^\perp}$ is not constant and distinguish two cases.
  
  Case 1: Suppose that $f_\tau = 1$ for some non-degenerate hyperplane $\tau \subseteq a^\perp$.
  Let $\tilde{\tau}$ be another non-degenerate hyperplane of $a^\perp$.
  Clearly, $(\tau \cap \tilde{\tau})^+ \rightarrow f_{\tilde{\tau}}$
  and, by Lemma \ref{lem:reduced_and_trivial}, $f_{\tilde{\tau}} \in \{ 0, 1, \tilde{\pi}^+, \tilde{\pi}^- \}$ for some hyperplane $\tilde{\pi}$
  of $\tilde{\tau}$.
  We claim that $f_{\tilde{\tau}} = (\tau \cap \tilde{\tau})^+$. 
  As $n - 3 \geq k$, $\pi \cap \tilde{\pi}$ contains an
  isotropic $k$-space, so $f_{\tilde{\tau}} \neq 0, \tilde{\pi}^-$.
  It remains to rule out $f_{\tilde{\tau}} = 1$.
  If $f_{\tilde{\tau}} = 1$, then let $\overline{\tau}$ be a 
  non-degenerate hyperplane of $a^\perp$ which does not contain $\tau \cap \tilde{\tau}$.
  As $\tau \cap \overline{\tau} \neq \tilde{\tau} \cap \overline{\tau}$,
  following the same arguments as for $f_{\tilde{\tau}}$, we obtain that $f_{\overline{\tau}} = 1$.
  As we can permute the roles of $\tau$, $\tilde{\tau}$ and $\overline{\tau}$,
  it follows that $f_{\overline{\tau}} = 1$ for all non-degenerate hyperplanes of $a^\perp$.
  Hence, $f_{a^\perp \setminus a} = 1$ which contradicts our assumption that $f_{a^\perp \setminus a} \neq 1$.
  Hence, $f_{\overline{\tau}} = \tau^+$ for all $\tau \neq \overline{\tau}$.
  
  Case 2: No non-degenerate hyperplane $\tau$ of $a^\perp$ satisfies $f_{\tau} = 1$.
  Then we can assume that for a non-degenerate hyperplane $\tau$ of $a^\perp$ we find a hyperplane $\pi$ of $\tau$
  with $f_{\tau} = \pi^+$. 
  Set $\pi' = \langle a, \pi \rangle^+$.
  We claim that $f_{a^\perp \setminus a} = \pi'^+$ follows.
  Clearly, $f_{\tilde{\tau}} = \pi^+$ for all non-degenerate hyperplanes $\tilde{\tau}$ through $\pi$
  (here we use $n-3 \geq k$ to rule out that $f_{\tilde{\tau}} = \tilde{\pi}^-$ for some hyperplane $\tilde{\pi}$ of $\tilde{\tau}$).
  As all isotropic $k$-spaces $K \subseteq a^\perp \setminus a$, which are not in $\pi'$, lie in one such $\tilde{\tau}$, it follows
  that $f_{a^\perp \setminus a}(K) = 0$. This rules out $f_{\overline{\tau}} = \overline{\pi}^-$
  for all non-degenerate hyperplanes $\overline{\tau}$ (and some hyperplane $\overline{\pi}$ 
  of $\overline{\tau}$). As $n - 2 \geq k$, $\overline{\tau} \cap \pi$ contains at least one isotropic $k$-space,
  so $f_{\overline{\tau}} \neq 0$. Hence, $f_{\overline{\tau}} = \pi'^+$.
  
  We have seen that $f_{a^\perp \setminus a}$ is trivial. By Corollary \ref{cor:hyp_hull},
  $f$ reduced, $k, n-k \geq 2$ and looking at all maximals through $a$, we 
  conclude that $f_{a^\perp}$ is trivial.
\end{proof}

\begin{theorem}\label{thm:polar_inductive_step}
  Let $k, n-k \geq 2$.
  If all Boolean degree $1$ functions on $C_q(n, k, e)$ and $J_q(n+1, k)$ are trivial,
  then all Boolean degree $1$ functions on $C_q(n+1, k, e)$ are trivial.
\end{theorem}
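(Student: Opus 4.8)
The plan is to run the argument of Theorem~\ref{thm:polar_inductive_step_inc_k}, but with one essential change forced by keeping $k$ fixed: instead of quotienting by an isotropic point (which would lower $k$ to $k-1$) I restrict $f$ to the perps $a^\perp$ of isotropic points, each of which is a degenerate rank-$n$ polar space inside an $(m-1)$-dimensional subspace of $\FF_q^m$. First I would reduce to the reduced case. Every maximal $S$ of $\PP_q(n+1,e)$ carries a copy of $J_q(n+1,k)$, so $f_S$ is trivial by hypothesis; feeding this into the analogue of Lemma~\ref{lem:polar_pt_trans} (with its appeal to Theorem~\ref{thm:grassmann_class} replaced by the present hypothesis on $J_q(n+1,k)$), the reduction process preceding Lemma~\ref{lem:reduced_and_trivial} writes $f$ as a disjoint union of point functions $p_i^+$ and a reduced Boolean degree~$1$ function. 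It therefore suffices to show that a reduced Boolean degree~$1$ function on $C_q(n+1,k,e)$ is either constant or equal to $\pi^\pm$ for a hyperplane $\pi$ of the ambient space; so assume from now on that $f$ is reduced and non-constant.

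Second, I would prove that $f_{a^\perp}\in\{0,1,\pi_a^+,\pi_a^-\}$ for every isotropic point $a$, where $\pi_a$ is a hyperplane of $a^\perp$. A hyperplane $\tau$ of $a^\perp$ not through $a$ projects isomorphically onto $a^\perp/a$ and hence carries a non-degenerate rank-$n$ polar space of the same type $e$; thus $f_\tau$ is a Boolean degree~$1$ function on $C_q(n,k,e)$, trivial by hypothesis, and reduced by Lemma~\ref{lem:reduced_and_trivial}, so $f_\tau\in\{0,1,\sigma^+,\sigma^-\}$. Plugging this into Lemma~\ref{lem:polar_from_a_to_aperp} yields the claim for $a^\perp$. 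When $n-k$ is too small for Lemma~\ref{lem:polar_from_a_to_aperp} to apply, I would instead combine Corollary~\ref{cor:hyp_hull} with a base-case computation of the kind in Lemma~\ref{lem:polar_small_dim_base_case}.

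Third comes the gluing of the local hyperplanes $\pi_a$ into a single ambient hyperplane $\pi$, which I expect to be the main obstacle. The perps $a^\perp$ over isotropic points $a$ cover every isotropic $k$-space (take $a\in K$), and on each of them $f$ is either constant or the hyperplane function $\pi_a^{\pm}$. Fix $a$ with $f_{a^\perp}$ non-constant, which we may normalize to $f_{a^\perp}=\pi_a^+$ after possibly replacing $f$ by $1-f$. As in Theorem~\ref{thm:polar_inductive_step_inc_k}, using Lemma~\ref{lem:polar_hyperplane_small_dim} and the connectivity of the collinearity graph of isotropic points, I would show that the $\pi_b$ are mutually compatible: for collinear isotropic $a,b$ the perps $a^\perp$ and $b^\perp$ share the $(m-2)$-space $\langle a,b\rangle^\perp$, on which both restrictions of $f$ agree, and this forces $\pi_a\cap b^\perp=\pi_b\cap a^\perp$; propagating along collinearity produces one ambient hyperplane $\pi$ with $\pi_b=\pi\cap b^\perp$ whenever $f_{b^\perp}$ is non-constant and $f_{b^\perp}=0$ otherwise, and then checking $f(K)=1_{K\subseteq\pi}$ for every isotropic $k$-space $K$ (choosing $a\in K$) finishes the proof. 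The delicate points will be the case analysis according to whether $\pi$ is degenerate (i.e.\ $\pi=p^\perp$ for an isotropic point $p$) or non-degenerate, the verification that the set of isotropic $k$-spaces inside a subspace pins down enough of that subspace for the compatibility step to go through, and the small values of $n-k$ where Lemma~\ref{lem:polar_from_a_to_aperp} is unavailable and one must fall back on explicit base cases.
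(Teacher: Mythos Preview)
Your architecture matches the paper's: reduce $f$, use Lemma~\ref{lem:polar_from_a_to_aperp} to obtain $f_{a^\perp}\in\{0,1,\pi_a^+,\pi_a^-\}$ for every isotropic point $a$, then glue the local hyperplanes into a single ambient hyperplane. The difference lies entirely in the gluing step.

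You propose to propagate along \emph{collinear} pairs $a,b$ via the common $(m-2)$-space $\langle a,b\rangle^\perp$, invoking Lemma~\ref{lem:polar_hyperplane_small_dim} and the connectivity of the collinearity graph. The paper instead first disposes of the degenerate-hyperplane case separately: if some $f_{a^\perp}=1$, it shows directly that $f=(a^\perp)^+$ by looking at $f_{b^\perp}$ for arbitrary $b$ and ruling out $f_{b^\perp}\in\{1,\tilde\pi^-\}$ using a third point. In the remaining case $f_{a^\perp}=\pi^+$ with $f_{b^\perp}\neq 1$ for all $b$, the paper picks $b$ \emph{non}-collinear with $a$ (and with $\pi\nsubseteq b^\perp$), reads off $f_{b^\perp}=\tilde\pi^+$ with $\dim(\pi\cap\tilde\pi)=m-2$ from the already-known triviality of $f_{b^\perp}$, and then a third point $c$ with $\pi\cap\tilde\pi\nsubseteq c^\perp$ forces $f_{c^\perp}=\overline\pi^+$ with $\overline\pi=\langle\pi\cap\overline\pi,\tilde\pi\cap\overline\pi\rangle$; permuting $a,b,c$ gives $f=\langle\pi,\tilde\pi\rangle^+$. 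Lemma~\ref{lem:polar_hyperplane_small_dim} is not used here at all.

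The paper's route sidesteps exactly the difficulty you anticipate: along a collinear pair, when $\pi_a$ happens to equal $b^\perp\cap a^\perp$ (which is precisely what occurs in the degenerate case $f=(b^\perp)^+$), the overlap $f_{\langle a,b\rangle^\perp}$ is identically $1$ and yields no constraint on $\pi_b$, so your propagation can stall. By treating $f_{a^\perp}=1$ as its own case and otherwise jumping directly to a non-collinear $b$, the paper never meets this obstruction and needs no connectivity argument or fallback to computed base cases. Your plan is not wrong, but the collinearity propagation together with the degenerate/non-degenerate case split you flag would amount to reproving what the non-collinear two-step already delivers in a few lines.
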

\begin{proof}
  We can assume that $f$ is reduced and not constant.
  
  First suppose that $f_{a^\perp} = 1$ for some isotropic point $a$.
  Our claim is that this implies $f = (a^\perp)^+$.
  Clearly, $(a^\perp)^+ \rightarrow f_{b^\perp}$ for all isotropic points $b$.
  By Lemma \ref{lem:polar_from_a_to_aperp}, $f_{b^\perp}$ is trivial, 
  so $f_{b^\perp} \in \{ (a^\perp)^+, \pi^-, 1 \}$ for some hyperplane $\pi$ of $b^\perp$.
  As $n-2 \geq k$, $\pi \cap a^\perp$ contains a $k$-space for all hyperplanes $\pi$,
  so $f_{b^\perp} = \pi^-$ does not occur. Now suppose that $f_{b^\perp} = 1$.
  We will show that this implies $f = 1$ which contradicts the assumption that $f$
  is not constant. To see this, consider an isotropic point $c$ such that $c^\perp$ does not contain
  $a^\perp \cap b^\perp$. Then $(a^\perp)^+ \rightarrow f_{c^\perp}$ and $(b^\perp)^+ \rightarrow f_{c^\perp}$.
  We chose $c$ such that $(a^\perp \cap c^\perp)^+ \neq (b^\perp \cap c^\perp)^+$, so, as $f_{c^\perp}$ is trivial, this implies
  $f_{c^\perp} = 1$. By permuting the roles of $a$, $b$, and $c$, we obtain that $f_{c^\perp} = 1$ for all isotropic points
  $c$. Hence, $f=1$ which contradicts $f$ non-constant. Therefore $f = (a^\perp)^+$.
  
  Now suppose that $f_{a^\perp} \neq 1$ for all isotropic points $a$.
  As $f$ is non-constant and reduced, we can assume without loss of generality that $f_{a^\perp} = \pi^+$
  for a hyperplane $\pi$ of $a^\perp$. 
  Let $b$ be an isotropic not in $a^\perp$ with $\pi \nsubseteq b^\perp$.
  As before, $f_{b^\perp} \in \{ 0, 1, \tilde{\pi}^+, \tilde{\pi}^- \}$
  for some hyperplane $\tilde{\pi}$ of $b^\perp$. 
  Our goal is to show that $f_{b^\perp} = \tilde{\pi}^+$ and then $f = \langle \pi, \tilde{\pi}\rangle^+$.
  By Lemma \ref{lem:polar_from_a_to_aperp}, we have $f_{b^\perp} \in \{ 0, \tilde{\pi}^+, \tilde{\pi}^- \}$.
  As $n - k \geq 2$, $b^\perp \cap \pi$ (respectively, $\tilde{\pi} \cap \pi$) contains at least one 
  isotropic $k$-space, so $f_{b^\perp} = \tilde{\pi}^+$ (with $\dim(\pi \cap \tilde{\pi}) = n-1$).
  Now let $c$ be an isotropic point with $\pi \cap \tilde{\pi} \nsubseteq c^\perp$.
  By the same reason as for $b$, $f_{c^\perp} = \overline{\pi}^+$.
  Let $m+1$ be the dimension of the ambient vector space.
  We claim that $\overline{\pi} = \langle \pi \cap \overline{\pi}, \tilde{\pi} \cap \overline{\pi} \rangle$.
  As $\overline{\pi} \nsubseteq \pi \cap \tilde{\pi}$ and $\dim(\pi \cap \overline{\pi}) = \dim(\tilde{\pi} \cap \overline{\pi}) = m-1$ and
  \begin{align*}
   \dim(\pi \cap \tilde{\pi} \cap \overline{\pi}) \leq m-2,
  \end{align*}
  we indeed conclude $\dim(\langle \pi \cap \overline{\pi}, \tilde{\pi} \cap \overline{\pi} \rangle) \geq 2(m-1) - (m-2) = m = \dim(\overline{\pi})$.
  Again, by permuting the roles of $a$, $b$, and $c$ we obtain that $f = \langle \pi, \tilde{\pi} \rangle^+$ which shows that $f$ is trivial.
\end{proof}

\begin{proof}[Proof of Theorem \ref{thm:polar_small_dim_main}]
  We prove the assertion by induction on $n$.
  By Lemma \ref{lem:polar_small_dim_base_case}, we can assume that $n \geq 5$.
  Theorem \ref{thm:polar_inductive_step_inc_k} and Theorem \ref{thm:polar_inductive_step} then complete the proof.
\end{proof}

\section{Sesquilinear forms graphs}

We denote the bilinear forms graphs of $\ell \times k$ bilinear forms over $\FF_q$ by $H_q(\ell, k)$ (we assume $\ell \leq k$),
the alternating forms graphs of $n\times n$ alternating forms over $\FF_q$ by $A_q(n)$, 
the Hermitian forms graphs of $n\times n$ Hermitian forms over $\FF_q$, $q$ a square of a prime power, by $Q_q(n)$,
and the symmetric bilinear forms graphs of $n \times n$ symmetric bilinear forms over $\FF_q$ by $S_q(n)$.
Hua observed \cite{Hua1949} (see also \cite[\S9.5E]{Brouwer1989})
that bilinear forms graphs, alternating forms graphs, Hermitian forms graphs, and symmetric bilinear forms graphs are induced subgraphs of either Grassmann graphs or certain dual polar
graphs. In particular:
\begin{enumerate}[(a)]
 \item Let $L$ be an $\ell$-space of $\FF_q^{k+\ell}$. Let $Y$ denote all subspaces
      of $J_q(n, k)$ disjoint from $L$. Then $H_q(\ell, k)$ is the induced subgraph of
      $J_q(\ell+k, k)$ on $Y$.
 \item Let $x$ be a maximal of $\PP_q(n, 1/2)$. Let $Y$ denote all maximals
      of $\PP_q(n, 1/2)$ disjoint from $x$. Then $Q_q(n)$ is the induced subgraph of
      $C_q(n, n, 1/2)$ on $Y$.
 \item Let $x$ be a maximal of $\PP_q(n, 1^*)$. Let $Y$ denote all maximals
      of $\PP_q(n, 1^*)$ disjoint from $x$. Then $S_q(n)$ is the induced subgraph of
      $C_q(n, n, 1^*)$ on $Y$.
 \item Let $x$ be a maximal of $\PP_q(n, 0)$. Let $Y$ denote all maximals
      of $\PP_q(n, 0)$ disjoint from $x$. Then $A_q(n)$ is the induced subgraph of
      the distance-$2$-graph of $C_q(n, n, 0)$ on $Y$.
\end{enumerate}
All these graphs are coordinate-induced subgraphs.

The eigenspaces of the bilinear forms graphs were described by Delsarte \cite{Delsarte1978},
and, using the identification of $H_q(\ell, k)$ with a subgraph of $J_q(\ell+k, k)$
(see \cite[\S9.5A]{Brouwer1989}), it is easy to see that Boolean degree $1$ functions of
$H_q(\ell, k)$ correspond to completely regular strength $0$ codes of covering radius $1$ of $H_q(\ell, k)$.
See the appendix of \cite{Schmidt2018} for an explicit discussion of the eigenspaces of $Q_q(n)$.
For $S_q(n)$ and $A_q(n)$ the same holds, but we are not aware of any reference 
that is more explicit than \cite[\S9.5]{Brouwer1989}.

As already the Boolean degree $1$ functions on $C_q(n, k, e)$ seems to have a very complicated
description (see Conjecture \ref{conj:polar_main}), and we were only able to solve one particular case (see Theorem \ref{thm:polar_small_dim_main}),
we leave the investigation of Boolean degree $1$ functions on the alternating, Hermitian, and symmetric bilinear forms graphs
for future work. In the rest of the section we investigate $H_q(\ell, k)$.

Let us start with a list of examples that are induced from $p^\pm$ and $\pi^\pm$ on $J_q(k+\ell, k)$.
Recall that we identify $H_q(\ell, k)$ with all $k$-spaces of $J_q(k+\ell, k)$ disjoint from a fixed 
$\ell$-space $L$.
\begin{enumerate}[(a)]
 \item $f = p^\pm$ for a point $p \notin L$,
 \item $f = \pi^\pm$ for a hyperplane $\pi$ with $L \nsubseteq \pi$.
\end{enumerate}

Our conjecture is the following:
\begin{conjecture}\label{conj:bilinear_main}
  Let $q$ be a prime power.
  Let $f$ be a Boolean degree $1$ function on $H_q(\ell, k)$, where $k+\ell$ is sufficiently large (depending on $q$).
  Then there exists an $(\ell+1)$-space $g \supseteq L$,
  an $(\ell-1)$-space $G \subseteq L$, points $p_i \in g$,
  and hyperplanes $\pi_i$ with $\pi_i \cap L = G$, and $p_j \notin \pi_i$
  for all $p_i$ and $\pi_j$
  such that
  \begin{align*}
    f^\pm = \bigvee p_i^+ \vee \bigvee \pi_i^+.
  \end{align*}
\end{conjecture}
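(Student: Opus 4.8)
The plan is to mirror the inductive strategy behind Theorem~\ref{thm:grassmann_class}, but with "trivial" replaced by the richer family in the conjecture and with the extra data tracked throughout: the line $g$ meeting $L$, the $(\ell-1)$-space $G\subseteq L$, and the sets $\{p_i\}$, $\{\pi_i\}$. Recall that $H_q(\ell,k)$ sits inside $J_q(k+\ell,k)$ as the $k$-spaces disjoint from a fixed $\ell$-space $L$, and that its first eigenspace $V_0+V_1$ is spanned by the point indicators $p^+$ with $p\notin L$. Thus a Boolean degree~$1$ function is exactly a $0,1$-valued assignment $K\mapsto c_0+\sum_{p\in K}c_p$; in particular, because a $k$-space disjoint from $L$ can contain at most one point of any line $g$ meeting $L$, a union $\bigvee p_i^+$ of point indicators with the $p_i$ on such a line is automatically Boolean and of degree~$1$, even though it is \emph{not} the restriction of any degree~$1$ function on $J_q(k+\ell,k)$. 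This is the source of the "new" examples and is why the ambient Grassmann classification cannot simply be quoted. The two restriction operations that drive the induction are: (i) for a point $a\notin L$, the restriction $f_a$ to $k$-spaces through $a$, which lives on $H_q(\ell,k-1)$ (work in $\FF_q^{k+\ell}/a$, where $\langle a,L\rangle/a$ is $\ell$-dimensional); and (ii) for a hyperplane $\pi$ with $L\nsubseteq\pi$, the restriction of $f$ to $k$-spaces inside $\pi$, which lives on $H_q(\ell-1,k)$ since such a $k$-space is disjoint from $L$ iff it is disjoint from the $(\ell-1)$-space $L\cap\pi$. Both restrictions are coordinate-induced (Lemma~\ref{lem:natural_embedding_rule}), and we also have the duality $H_q(\ell,k)\cong H_q(k,\ell)$, so we may assume $\ell\le k$.

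With this in hand I would induct on $k+\ell$. The base case is $H_2(2,2)$ (sixteen vertices): here one enumerates directly, e.g.\ by solving for all $0,1$-vectors in the span of the point indicators, or by a short MIP as in Section~\ref{ch:polar}, and checks that every such function is one of those listed in the conjecture. This fixes the "shape" the induction must reproduce. For the inductive step, assume the conjecture for $H_q(\ell,k-1)$ (and, by duality, for $H_q(\ell-1,k)$ etc.). For every point $a\notin L$ the restriction $f_a$ is one of the conjectured forms on $H_q(\ell,k-1)$, described by data $(g_a,G_a,\{p_i^{(a)}\},\{\pi_i^{(a)}\})$ inside $\FF_q^{k+\ell}/a$. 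As in Lemmas~\ref{lem:char_by_quotient_case_pt}--\ref{lem:char_by_quotient_case_pt_pl} and the "sudoku" argument of Section~\ref{lem:johnson-deg1-pair}, the heart of the proof is to show that these local pictures fit together: comparing $f_a$ and $f_b$ over many pairs of points $a,b\notin L$, one pulls the lines $g_a$ back to $\FF_q^{k+\ell}$ and forces them to lie in a common plane through a common point of $L$ (producing the global line $g$), forces the hyperplanes $\pi_i^{(a)}$ to share a common $(\ell-1)$-space $G\subseteq L$, and finally forces the component sets $\{p_i\}$, $\{\pi_i\}$ to be globally consistent. Since the $k$-spaces through the various points $a\notin L$ cover all of $H_q(\ell,k)$, $f$ is then recovered from $\{f_a\}$. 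A parallel argument using the hyperplane restrictions (ii) handles the cases where the $f_a$ involve hyperplanes, and one also has to exclude "mixed" behaviour (some $f_a$ a union of points, some a union of hyperplanes, incompatibly) by exhibiting an explicit $k$-space on which the two descriptions disagree — the analogue of ruling out $(f_a,f_b)=(b^\pm,a^\pm)$ in Lemma~\ref{lem:johnson-deg1-pair}.

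I expect two real obstacles. The first, and the reason the full conjecture is only a conjecture, is the base case for general $q$: with $\ell$ fixed, the induction bottoms out at $H_q(2,2)$ (the point restriction decreases $k$ until $H_q(2,2)$, and $H_q(1,k)$ is a complete graph and useless), and classifying Boolean degree~$1$ functions on $H_q(2,2)$ is essentially the classification of certain Cameron--Liebler-type line classes in $\mathrm{PG}(3,q)$, which is notoriously hard and open. So "$k+\ell$ sufficiently large" does not by itself rescue small $\ell$; one needs either new input on $H_q(2,2)$, or an additional hypothesis forcing $\ell$ to grow as well (so the hyperplane restrictions (ii) can be iterated), or a non-inductive eigenvalue/stability argument for $H_q(2,k)$ with $k$ large. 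The second obstacle is bookkeeping: unlike the Grassmann case, where the answer is a single point or hyperplane, the local data here is a whole pencil whose size could a priori fluctuate across restrictions, and the delicate point is to show that the collinearity constraint on the $p_i$ and the common-$(\ell-1)$-space constraint on the $\pi_i$ propagate exactly, with neither loss nor gain of components. The $q$-dependent threshold presumably enters precisely as the largest case that must be resolved by brute force before the inductive machine takes over.
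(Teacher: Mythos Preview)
This statement is a \emph{conjecture} in the paper, not a theorem: the paper does not prove it and offers no proof to compare against. What the paper actually does with Conjecture~\ref{conj:bilinear_main} is (i) observe that the listed functions are precisely those induced from degree~$1$ functions on the ambient $J_q(k+\ell,k)$, (ii) verify the statement by computer for $H_2(2,2)$ and $H_2(2,3)$, and (iii) show, via the restriction $f_{BD'}$ of the Bruen--Drudge example, that $H_q(2,2)$ for odd $q$ admits Boolean degree~$1$ functions \emph{not} of the conjectured form (Theorem~\ref{lem:bilinear_sizes_canon}), so the ``$k+\ell$ sufficiently large'' hypothesis is genuinely necessary. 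Beyond that the conjecture is left open.

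Your outline is a reasonable strategy and you correctly identify the essential obstruction: any induction of the Grassmann type bottoms out at $H_q(2,2)$, and that case is open (indeed, by the Bruen--Drudge restriction it is false without an extra hypothesis). So your plan is not a proof, and you say as much. One small correction: the functions $\bigvee_i p_i^+$ with the $p_i$ on a line meeting $L$ \emph{are} restrictions of degree~$1$ functions on $J_q(k+\ell,k)$ (namely $\sum_i p_i^+$); the point is only that the latter is not Boolean on the full Grassmann graph but becomes Boolean on $H_q(\ell,k)$. This is exactly what the paper means when it says the conjecture ``covers exactly the examples which are induced by examples from $J_q(n,k)$''. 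This does not, however, let you simply quote Theorem~\ref{thm:grassmann_class}, since the ambient classification concerns Boolean degree~$1$ functions upstairs, and you have no a~priori reason a Boolean degree~$1$ function on $H_q(\ell,k)$ extends to a Boolean one on $J_q(k+\ell,k)$.
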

It is clear that $p_i^+ + p_j^+$ is a Boolean function if and only if 
$\langle p_i, p_j \rangle \cap L$ is a point. Similarly, it is clear that 
$\pi_i^+ + \pi_j^+$ is a Boolean function if and only if $\pi_i \cap L = \pi_j \cap L$.
Hence, Conjecture \ref{conj:bilinear_main} covers exactly the examples
which are induced by examples from $J_q(n, k)$.
For $J_q(4, 2)$ we do know many non-trivial examples for Boolean degree $1$ functions,
so the condition $k+\ell > 4$ is necessary.
By computer we verified the conjecture for $H_2(2, 2)$ and $H_2(2, 3)$.

We start by describing the example by Bruen and Drudge \cite{Bruen1999} for $J_q(4, 2)$, $q$ odd.
Consider the polar space $O^-(4, q)$ in its natural embedding in $\FF_q^4$.
Let $\cP$ denote the isotropic points of $O^-(4, q)$.
A \textit{tangent} is a line of $\FF_q^4$ which contains exactly one point of $\cP$,
a \textit{secant} a line which contains exactly two points of $\cP$,
a \textit{passant} a line which contain no point of $\cP$. It is well-known that 
every line in $\FF_q^4$ is a passant, a tangent, or a secant.
For each point $p \in \cP$, let $\cL_p$ denote a special set of 
$(q+1)/2$ of all $q+1$ tangents of through $p$. We refer the reader to \cite[\S3]{Bruen1999}
for details on how $\cL_p$ is chosen.
Let $\cS$ denote all secants of $\cP$.
\begin{example}[Bruen and Drudge]\label{ex:bruendrudge}
  Let $f_{BD}$ be a Boolean function on $J_q(4, 2)$, $q$ odd, 
  defined by $f_{BD}(K) = 1$ if and only if $K \in \cS \cup \bigcup \cL_p$.
  Then $f_{BD}$ is a Boolean degree $1$ function with $|f_{BD}| = (q^2+1)(q^2+q+1)/2$.
\end{example}
Choose our embedding of $H_q(2, 2)$ in $J_q(4, 2)$ such that 
$L$ is an external line of $O^-(4, q)$.
Let $f_{BD'}$ denote the restriction of $f_{BD}$ to $H(\ell, k)$.
\begin{lemma}\label{lem:BD_on_bilinear}
  We have $|f_{BD'}| = (q^2+1)q^2/2$.
\end{lemma}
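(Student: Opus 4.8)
The plan is to recognise $f_{BD'}$ as the restriction of the Boolean degree $1$ function $f_{BD}$ to a single relation-neighbourhood of $L$ in the association scheme of $J_q(4,2)$, and then to read off its size from the eigenvalue data tabulated in Section~\ref{subsec:asssoc}.

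First I would unwind the embedding: $H_q(2,2)$ sits inside $J_q(4,2)$ as the set of all $2$-spaces $K$ of $\FF_q^4$ with $K\cap L=0$, which is exactly the set of lines lying in relation $R_2$ (meet of dimension $0$) with $L$. Every line $K$ satisfies exactly one of $K=L$, $(K,L)\in R_1$ (meet in a point), $(K,L)\in R_2$ (skew to $L$); hence $|f_{BD'}| = |f_{BD}| - [\,f_{BD}(L)=1\,] - \bigl|\{K : f_{BD}(K)=1,\ (K,L)\in R_1\}\bigr|$. Now $L$ is an external line of $O^-(4,q)$, i.e.\ a passant, so $L$ is neither a secant nor a tangent; therefore $L\notin\cS\cup\bigcup_p\cL_p$ and $f_{BD}(L)=0$. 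It remains to count the lines of $f_{BD}$ meeting $L$ in a point.

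For that I would apply Lemma~\ref{lem:ev_char} to $f_{BD}$. By Example~\ref{ex:bruendrudge}, $f_{BD}$ is a Boolean degree $1$ function on $J_q(4,2)$, so it lies in $V_0+V_1$, and we have just checked $f_{BD}(L)=0$; the lemma then gives $\bigl|\{K : f_{BD}(K)=1,\ (K,L)\in R_1\}\bigr| = |f_{BD}|\cdot(P_{01}-P_{11})/v$. Substituting the values for $J_q(4,2)$ from the table, $(P_{01}-P_{11})/v = \tfrac{q+1}{q^2+q+1}$, together with $|f_{BD}| = \tfrac{(q^2+1)(q^2+q+1)}{2}$ from Example~\ref{ex:bruendrudge}, this count equals $\tfrac{(q^2+1)(q+1)}{2}$, whence
\[
|f_{BD'}| \;=\; \frac{(q^2+1)(q^2+q+1)}{2} - \frac{(q^2+1)(q+1)}{2} \;=\; \frac{(q^2+1)q^2}{2}.
\]

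I do not foresee a genuine obstacle: the argument uses only that $f_{BD}$ is degree $1$ (Example~\ref{ex:bruendrudge}) and that a passant is neither secant nor tangent. As a cross-check one can count directly. The secants number $\binom{q^2+1}{2}$, and through a fixed point $x$ of $L$ exactly $q(q-1)/2$ of them pass: the tangents through the external point $x$ biject with the $q+1$ points of the non-degenerate conic $x^{\perp}\cap O^-(4,q)$, leaving $q^2-q$ further points of $O^-(4,q)$ to be covered two at a time by secants through $x$. Since a passant cannot contain two points of $L$, exactly $q(q^2-1)/2$ secants meet $L$ and the rest are skew to it. By the defining property of the sets $\cL_p$ recorded in \cite[\S3]{Bruen1999}, exactly $(q+1)/2$ special tangents pass through each external point, so $(q+1)^2/2$ special tangents meet $L$ and $q(q^2-1)/2$ are skew to it. Adding the two families of lines skew to $L$ gives once more $\binom{q^2+1}{2} = \tfrac{(q^2+1)q^2}{2}$, matching the computation above; the only part of this alternative route that is not self-contained is the Bruen--Drudge property quoted from \cite{Bruen1999}, which is why I would take the Lemma~\ref{lem:ev_char} argument as the primary proof.
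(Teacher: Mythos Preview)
Your argument is correct and follows the same overall strategy as the paper: compute the number of lines $K$ with $f_{BD}(K)=1$ that meet $L$ in a point, and subtract this from $|f_{BD}|$. The paper obtains the intermediate count $(q^2+1)(q+1)/2$ by citing \cite[Theorem~1.1(vii) and Theorem~3.1]{Bruen1999} directly, whereas you derive it from Lemma~\ref{lem:ev_char} applied at the passant $L$ together with the table entry $(P_{01}-P_{11})/v=(q+1)/(q^2+q+1)$ for $J_q(4,2)$. Your route has the advantage of being internal to the paper: it needs only that $f_{BD}\in V_0+V_1$ (Example~\ref{ex:bruendrudge}) and that a passant is not a secant or tangent, rather than the explicit Bruen--Drudge line count. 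Your direct cross-check is also sound and, as you note, is essentially the external citation repackaged.
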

\begin{proof}
  An external line $L$ meets exactly $(q^2+1)(q+1)/2$ lines $K$
  with $f(K) = 1$ (e.g. see \cite[Theorem 1.1 (vii) and Theorem 3.1]{Bruen1999}).
  Hence, the claim follows from $|f_{BD}| = (q^2+1)(q^2+q+1)/2$.
\end{proof}

\begin{theorem}\label{lem:bilinear_sizes_canon}
  There exists no Boolean degree $1$ function $f$ on $H_q(2, 2)$ as in Conjecture \ref{conj:bilinear_main}
  such that $f = f_{BD'}$.
\end{theorem}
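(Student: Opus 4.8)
The plan is to assume that $f_{BD'}$ or its complement $1-f_{BD'}$ has the form prescribed in Conjecture~\ref{conj:bilinear_main}, say $f = \bigvee_i p_i^+ \vee \bigvee_j \pi_j^+$, and to derive a contradiction from the geometry of the elliptic quadric $O^-(4,q)$ underlying the Bruen--Drudge example. The first step is bookkeeping. On $H_q(2,2)$, identified with the $2$-spaces of $\FF_q^4$ disjoint from $L$, a direct count (passing to the quotient by a point, respectively intersecting with a hyperplane) shows $|p^+| = |\pi^+| = q^2$ for every point $p \notin L$ and every hyperplane $\pi$ with $L \nsubseteq \pi$. Moreover the functions $p_i^+$ and $\pi_j^+$ occurring in $f$ are pairwise disjoint as sets of vertices: two $p_i^+$ share no vertex because $\langle p_i, p_j \rangle = g$ meets $L$; two $\pi_j^+$ share no vertex because $\pi_i \cap \pi_j$ contains the point $G = \pi_i \cap L \in L$; and $p_i^+$ is disjoint from $\pi_j^+$ because $p_i \notin \pi_j$. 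Hence $f$ is a disjoint union of $N$ blocks, each of size $q^2$, so $|f| = Nq^2$; comparing with $|f_{BD'}| = q^2(q^2+1)/2$ from Lemma~\ref{lem:BD_on_bilinear} (and $|1 - f_{BD'}| = q^2(q^2-1)/2$) forces $N = (q^2+1)/2$ or $N = (q^2-1)/2$.

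The heart of the argument is to rule out every block of hyperplane type $\pi^+$. Such a block consists of all $q^2$ lines of $\FF_q^4$ contained in $\pi$ and disjoint from $L$, equivalently, contained in $\pi$ and avoiding the point $G = \pi \cap L$. If $\pi^+$ lay inside the support of $f_{BD'}$, then $\pi$ would contain at least $q^2$ lines of $f_{BD} = \cS \cup \bigcup_p \cL_p$. But every hyperplane of $\FF_q^4$ meets $O^-(4,q)$ either in a conic (a secant hyperplane), which contains only $\binom{q+1}{2}$ secants and $q+1$ tangents of $O^-(4,q)$, or in a single point $r$ (the tangent hyperplane $T_r$), whose only $f_{BD}$-lines are the $(q+1)/2$ members of $\cL_r$; in both cases the number of $f_{BD}$-lines inside $\pi$ is strictly less than $q^2$. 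If instead $\pi^+$ lay inside the co-support of $f_{BD'}$, then every $f_{BD}$-line inside $\pi$ would have to pass through $G$; but a secant hyperplane contains $\binom{q+1}{2} > q+1$ secants of $O^-(4,q)$, too many to pass through a single point, and inside a tangent hyperplane $T_r$ the $f_{BD}$-lines all pass through $r \neq G$ and there are at least two of them. Either way $\pi^+$ meets both the support and the co-support of $f_{BD'}$, so no block of hyperplane type occurs.

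With no hyperplane blocks, the function $f$ is $\bigvee_i p_i^+$ with all the $p_i$ lying on the single line $g$, which meets $L$ in a point; hence at most $q$ of them lie off $L$, so $N \le q$. Since $N = (q^2 \pm 1)/2 > q$ for every $q \ge 3$, this is a contradiction, and $f_{BD'}$ is not of the conjectured form. The technical core, and the step I expect to require the most care, is the count in the previous paragraph: it uses the classification of hyperplane sections of $O^-(4,q)$ together with the two properties of the Bruen--Drudge half-tangent sets that $\cL_p \subseteq T_p$ and $|\cL_p| = (q+1)/2$. For $q = 3$ the crude bound $\binom{q+1}{2} + (q+1)$ on the number of $f_{BD}$-lines in a hyperplane exceeds $q^2$, so the estimate must be refined by also tracking how many secants and tangents of the plane conic $\pi \cap O^-(4,q)$ pass through $G$, according to whether $G$ is internal or external to that conic; this still yields a bound below $q^2$, so the argument goes through uniformly in $q$.
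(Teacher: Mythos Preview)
Your proposal is correct and follows the same two-step strategy as the paper: first rule out any hyperplane block $\pi^+$ in the conjectured decomposition, then observe that at most $q$ point blocks $p_i^+$ can occur and compare with $|f_{BD'}| = q^2(q^2+1)/2$. The paper's version of the first step is shorter and avoids your $q=3$ refinement entirely: rather than counting $f_{BD}$-lines in $\pi$, it simply exhibits, inside any admissible hyperplane $\pi$, two lines disjoint from $L$ with opposite $f_{BD}$-values (a secant and a passant when $\pi\cap O^-(4,q)$ is a conic; two tangents through $p$, one in $\cL_p$ and one not, when $\pi=p^\perp$ with $p\notin L$), so $\pi^+$ is never monochromatic for $f_{BD'}$.
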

\begin{proof}
  Suppose that $\pi^\pm \rightarrow f_{BD'}$ for some hyperplane $\pi$ with $L \nsubseteq \pi$.
  
  If $\pi \cap O^-(4, q)$ is isomorphic to $O(3, q)$, then $\pi$ contains secants and
  external lines. As $f_{BD}(K) = 1$ for secants $K$, and $f_{BD}(K) = 0$ for external lines $K$,
  this is a contradiction. 
  
  If $\pi \cap O^-(4, q)$ is isomorphic to $p^\perp$ for some point $p$ of $O^-(4, q)$,
  then $f_{BD}(K) = 1$ for $(q+1)/2$ tangents through $p$ and $f_{BD}(K) = 0$
  for the other $(q+1)/2$ tangents through $p$. If $p \notin L$, then again this is
  a contradiction, so $p \in L$. But we chose $L$ to be an external line, so $p \notin L$.
  
  Hence, $\pi^\pm \nrightarrow f_{BD'}$ for all hyperplanes $\pi$.
  Hence, $f_{BD'}^\pm = \bigvee p_i^+$ for $p_i$ on a plane $g$ which contains $L$.
  
  If $p_i \not\in \cP$, then $p_i$ lies on secants and passants not in $L$.
  As $f_{BD}(K) = 1$ if $K$ a secant and $f_{BD}(K) = 0$ if $K$ a passant,
  this is a contradiction.
  
  Hence, $p_i \in \cP$. Then $f_{BD}(K) = 1$ for all lines $K$ through $p_i$
  except for $(q+1)/2$ tangents in the plane $p_i^\perp$ for which we have $f_{BD}(K) = 0$. 
  Hence, $L \subseteq p_i^\perp$. But $L^\perp$ contains at most two points of $\cP$.
  Therefore, as $|p^+| = q^2$, either $|f_{BD'}| \leq 2 \cdot q^2$ or $|f_{BD'}| \geq q^4 - 2\cdot q^2$.
  By Lemma \ref{lem:BD_on_bilinear}, $|f_{BD'}| = (q^2+1)q^2/2$. As for $q \geq 3$,
  $2q^2 < (q^2+1)q^2/2 < q^4 - 2q^2$, this is a contradiction.
\end{proof}

\paragraph*{Remark} The published journal version of this document states Conjecture \ref{conj:bilinear_main} incorrectly.
Namely, it requires $g$ to be a line and not, as here, an $(\ell+1)$-space. As a consequence, the proof of 
Theorem \ref{lem:bilinear_sizes_canon} needed a small adjustment.

\section{Graphs from groups} \label{ch:groups}

\paragraph{Finite Abelian groups} Let $G = \prod_{i=1}^n \ZZ_{m_i}$ be a finite Abelian group. We think of functions on $G$ as $n$-variate functions whose $i$th input is $x_i \in \ZZ_{m_i}$. We say that a function on~$G$ has degree~1 if it has the form $f(x_1,\ldots,x_n) = \sum_{i=1}^n \phi_i(x_i)$, where $\phi_i\colon \ZZ_{m_i} \to \RR$ are arbitrary functions. When all $m_i$ are equal to some~$m$, then this agrees with the definition of degree~1 functions on the Hamming scheme $H(n,m)$.

\begin{theorem}[Folklore] \label{thm:abelian}
If $f$ is a Boolean degree~1 function on a finite Abelian group	$G = \prod_{i=1}^n \ZZ_{m_i}$ then $f(x_1,\ldots,x_n) = \phi_i(x_i)$ for some $i \in [n]$ and $\phi_i\colon \ZZ_{m_i} \to \{0,1\}$.
\end{theorem}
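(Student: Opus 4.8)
The plan is to use the definition directly: write $f(x_1,\ldots,x_n)=\sum_{i=1}^n\phi_i(x_i)$ with each $\phi_i\colon\ZZ_{m_i}\to\RR$, and prove that at most one of the functions $\phi_i$ is non-constant. Granting this, the theorem follows immediately: if only $\phi_i$ is non-constant then $f=\phi_i(x_i)+c$ with $c=\sum_{j\ne i}\phi_j$ constant, and since $f$ takes values in $\{0,1\}$ so does the univariate function $\tilde\phi_i\colon x_i\mapsto\phi_i(x_i)+c$, whence $f(x_1,\ldots,x_n)=\tilde\phi_i(x_i)$; and if all $\phi_i$ are constant then $f$ is a constant, which is also of the stated form.

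The core is a two-variable lemma: \emph{if $\alpha(x)+\beta(y)$ is $\{0,1\}$-valued on a product domain, then $\alpha$ or $\beta$ is constant.} I would prove this by contradiction. Assuming both are non-constant, pick $x_0,x_1$ with $\alpha(x_0)\ne\alpha(x_1)$ and $y_0,y_1$ with $\beta(y_0)\ne\beta(y_1)$, and set $v_{ab}=\alpha(x_a)+\beta(y_b)\in\{0,1\}$. Since $v_{10}-v_{00}=\alpha(x_1)-\alpha(x_0)$ is a nonzero difference of two elements of $\{0,1\}$, it equals $\pm1$; likewise $\beta(y_1)-\beta(y_0)=\pm1$. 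But then
\[
  v_{11}=v_{00}+\bigl(\alpha(x_1)-\alpha(x_0)\bigr)+\bigl(\beta(y_1)-\beta(y_0)\bigr),
\]
so if the two signs agree we get $v_{11}=v_{00}\pm2\notin\{0,1\}$, while if they disagree then $v_{10}=v_{00}+1$ and $v_{01}=v_{00}-1$ force $v_{00}=0$ and $v_{00}=1$ at once --- a contradiction in every case.

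To finish, suppose $\phi_i$ and $\phi_j$ are both non-constant for some $i\ne j$. Freezing all coordinates other than $i$ and $j$ to arbitrary values makes $f$ restrict to $g(x_i,x_j)=\phi_i(x_i)+\bigl(\phi_j(x_j)+c\bigr)$, a $\{0,1\}$-valued function on $\ZZ_{m_i}\times\ZZ_{m_j}$, so the lemma forces $\phi_i$ or $\phi_j+c$ --- hence $\phi_i$ or $\phi_j$ --- to be constant, contradicting the choice of $i,j$. Thus at most one $\phi_i$ is non-constant, and the first paragraph concludes the argument. I do not expect a real obstacle here; the only thing to be careful about is that ``degree~$1$'' permits arbitrary real-valued $\phi_i$, so the argument must be combinatorial in the values of $f$ rather than a character computation. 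This is exactly the specialization to general $G$ of Meyerowitz's classification on $H(n,m)$ (the case $m_1=\cdots=m_n$).
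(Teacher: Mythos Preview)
Your proposal is correct and follows essentially the same approach as the paper: reduce to two coordinates by freezing the rest, and derive a contradiction from both $\phi_i$ and $\phi_j$ being non-constant. The paper shortcuts your case analysis by choosing the labels so that $\phi_i(y_i)<\phi_i(z_i)$ and $\phi_j(y_j)<\phi_j(z_j)$, whence $g(y_i,y_j)<g(z_i,y_j)<g(z_i,z_j)$ gives three distinct values in $\{0,1\}$; this is the same idea, just packaged more tersely.
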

\begin{proof}
 Suppose that $f = \sum_{i=1}^n \phi_i(x_i)$. The claim clearly follows if we show that at most one $\phi_i$ can be non-constant. Suppose, for the sake of contradiction, that both $\phi_i$ and $\phi_j$ are non-constant, say $\phi_i(y_i) < \phi_i(z_i)$ and $\phi_j(y_j) < \phi_j(z_j)$. Restrict $f$ to a Boolean function $g$ on $\ZZ_{m_i} \times \ZZ_{m_j}$ by fixing the other coordinates arbitrarily. We reach a contradiction by considering the following chain of inequalities:
\[
 g(y_i,y_j) < g(z_i,y_j) < g(z_i,z_j).
 \qedhere
\]
\end{proof}

The same argument is clearly valid for all product domains.

\paragraph{Symmetric group} We can think of the symmetric group $S_n$ as the collection of all $n\times n$ permutation matrices. The degree of a function on $S_n$ is then the smallest degree of a polynomial in the entries which represents the function. Ellis, Friedgut and Pilpel~\cite{EFP} have determined all Boolean degree~1 functions on $S_n$:

\begin{theorem}[Ellis, Friedgut and Pilpel] \label{thm:efp}
 If $f$ is a Boolean degree~1 function on $S_n$ then either $f(\pi) = 1_{\pi(i) \in J}$ for some $i \in [n]$ and $J \subseteq [n]$, or $f(\pi) = 1_{\pi^{-1}(j) \in I}$ for some $j \in [n]$ and $I \subseteq [n]$.
\end{theorem}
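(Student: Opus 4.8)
The plan is to translate the functional-analytic description of degree-$1$ functions into a purely combinatorial statement about real matrices and then to strip off the structure by evaluating $f$ at carefully chosen permutations.

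\textbf{Step 1: matrix reformulation.} By definition a degree-$\le 1$ function on $S_n$ has the form $f(\pi)=c+\sum_{i,j}c_{ij}1_{\pi(i)=j}$; setting $A_{ij}=c_{ij}+c/n$ this reads $f(\pi)=\sum_{i=1}^n A_{i,\pi(i)}$ for a real $n\times n$ matrix $A$. This representation is not unique: replacing $A_{ij}$ by $A_{ij}+r_i+s_j$ changes $f$ by the constant $\sum_i r_i+\sum_j s_j$, so $A$ may be altered by any such ``gauge'' term of total sum zero. In these terms the target is: a $\{0,1\}$-valued $f$ of this shape has, after a gauge change together with a left translation $\pi\mapsto\pi_0\pi$ and a right translation $\pi\mapsto\pi\pi_1$, a $\{0,1\}$-valued matrix $A$ supported on a single row, or on a single column, or identically $0$ or $1$.

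\textbf{Step 2: normalisation via the assignment LP.} Let $m=\min_\pi f(\pi)$. If $f$ is constant it equals $1_{\pi(1)\in[n]}$ or $1_{\pi(1)\in\emptyset}$ and we are done; otherwise $m=0$. The minimum-cost assignment problem $\min_\pi\sum_i A_{i,\pi(i)}$ has, by Birkhoff's theorem and LP duality, optimal dual variables $u_i,v_j$ with $u_i+v_j\le A_{ij}$ everywhere, equality along some optimal $\pi_0$, and $\sum_i u_i+\sum_j v_j=m=0$. Applying the gauge move $A_{ij}\mapsto A_{ij}-u_i-v_j$ we may assume $A\ge 0$ and $A_{i,\pi_0(i)}=0$ for all $i$; composing $f$ with the translation by $\pi_0^{-1}$ (which preserves the ``row/column dictator'' shape of any eventual answer, as one checks directly) we may further assume $\pi_0=\mathrm{id}$, i.e.\ $A_{ii}=0$ for all $i$. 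We are left with $f(\pi)=\sum_i A_{i,\pi(i)}\in\{0,1\}$, $A\ge 0$, $A_{ii}=0$, and $f$ non-constant, so $\max_\pi f(\pi)=1$.

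\textbf{Step 3: the combinatorial core.} Suppose some off-diagonal entry $A_{ab}$ is positive. Evaluating at the transposition $(a\ b)$ gives $f((a\ b))=A_{ab}+A_{ba}\in\{0,1\}$, which must be $1$ since $A_{ab}>0$. Evaluating at $(a\ b)(c\ d)$ for any two indices $c\ne d$ outside $\{a,b\}$ gives $f=1+A_{cd}+A_{dc}\in\{0,1\}$, forcing $A_{cd}=A_{dc}=0$; thus $A$ is supported on the ``cross'' of rows $a,b$ and columns $a,b$. A finer analysis of this cross --- using the three-cycles $(a\ b\ c)$, $(a\ c\ b)$, the transpositions $(a\ c)$, $(b\ c)$, their products with a disjoint transposition, and crucially that $\sum_i A_{i,\pi(i)}$ is \emph{exactly} $0$ or $1$, never strictly between --- forces, after one further gauge correction, every nonzero entry of $A$ to lie in a single row or a single column and to be $\{0,1\}$-valued, that row (resp.\ column) being the indicator of a set $J$ (resp.\ $I$). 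Translating back, a single nonzero row $i$ equal to $1_J$ gives $f(\pi)=1_{\pi(i)\in J}$ and a single nonzero column $j$ equal to $1_I$ gives $f(\pi)=1_{\pi^{-1}(j)\in I}$; unwinding the translations of Step~2 preserves these forms, which is the claimed classification. If no off-diagonal entry is positive then $A=0$ and $f\equiv 0$.

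\textbf{Main obstacle and an alternative.} The delicate point is the ``finer analysis of the cross'' in Step~3: ruling out genuinely two-dimensional patterns (nonzero entries spread over two rows and two columns at once, e.g.\ a perturbed $2\times 2$ all-ones block) and fractional entry values, which requires exploiting that $f$ takes values \emph{only} in $\{0,1\}$ by routing enough permutations through prescribed pairs of cells; this bookkeeping is where the real work lies, and it is essentially the content of the argument of Ellis, Friedgut and Pilpel~\cite{EFP}. The small cases $n\le 3$ must be disposed of by inspection, since disjoint transpositions and three-cycles require $n\ge 4$. A structurally different route, closer in spirit to the Johnson and Grassmann arguments above, is induction on $n$: the restriction of $f$ to each coset $\{\pi:\pi(n)=j\}\cong S_{n-1}$ is again degree $\le 1$, hence a dictator by induction, and one then performs a sudoku-style case analysis (as in Lemma~\ref{lem:johnson-deg1-pair}) to glue the $n$ dictators into one; this shifts the difficulty into the gluing step.
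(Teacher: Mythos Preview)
The paper does not prove this theorem at all: it is quoted as a result of Ellis, Friedgut and Pilpel and used as a black box (in the proof of Lemma~\ref{lem:multislice-deg1}). So there is no ``paper's proof'' to compare your attempt against.

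On its own merits, your outline is reasonable through Step~2 (the gauge/LP normalisation to $A\ge 0$ with zero diagonal is a clean reduction, and the cross localisation at the start of Step~3 is correct for $n\ge 4$). The genuine gap is exactly where you say it is: the ``finer analysis of the cross''. After your reduction the matrix can still have nonzero entries in two full rows and two full columns simultaneously, with a priori non-integer values, and you have not shown how the $\{0,1\}$ constraint on $\sum_i A_{i,\pi(i)}$ collapses this to a single row or column with $\{0,1\}$ entries. Writing ``this bookkeeping is where the real work lies, and it is essentially the content of the argument of Ellis, Friedgut and Pilpel'' is an honest admission, but it means your proposal is not a proof: at the crucial point it appeals to the very reference the theorem is attributed to. If you want a self-contained argument you must actually carry out that case analysis (or give a different mechanism that forces rank-one structure on $A$).

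Your alternative inductive route --- restrict to the cosets $\{\pi:\pi(n)=j\}\cong S_{n-1}$ and glue --- is viable in principle and indeed closer in spirit to the paper's Johnson/Grassmann proofs, but note that the gluing here is harder than in Lemma~\ref{lem:johnson-deg1-pair}: each restriction can be a row dictator \emph{or} a column dictator, with an arbitrary subset $J$ or $I$ attached rather than a single coordinate, so the compatibility analysis across two cosets has many more branches. Neither approach is wrong, but neither is complete as written.
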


Ellis, Friedgut and Pilpel also claim to characterize Boolean degree~$d$ functions on the symmetric group, but there is a mistake in their argument~\cite{efp-comment}.

\paragraph{General linear group} We can think of the general linear group $GL_q(n)$ as the collection of all $\frac{q^n-1}{q-1} \times \frac{q^n-1}{q-1}$ matrices with entries in $\FF_q$ which represent linear operators, and define degree accordingly. The following is a (possibly incomplete) list of Boolean degree~1 functions on $GL_2(n)$, stated as conditions on the input matrix $M \in GL_2(n)$:
\begin{itemize}
\item $Mx \in Y$.
\item $zM \in W$.
\item $Mx \notin Y$ and $wM^{-1} \notin Y^\perp$, where $w \not\perp x$.
\item $zM \notin W$ and $M^{-1}y \notin W^\perp$, where $z \not\perp y$.
\end{itemize}
Such a list can be used to derive our characterization of Boolean degree~1 functions on the Grassmann scheme in the case $q=2$, since the latter can be realized as the set of left cosets of a parabolic subgroup of $GL_2(n)$.

\section{Multislice graphs}\label{ch:multislice}

The multislices decompose $\ZZ_m^n$ (or, $H(n,m)$) in the same way that the slices decompose $\ZZ_2^n$ (or, $H(n,2)$). Let $k_1,\ldots,k_m$ be positive integers summing to $n$. The \emph{multislice} $M(k_1,\ldots,k_m)$ consists of all points of $\ZZ_m^n$ in which the number of coordinates colored~$i$ is equal to $k_i$. We can think of the elements as vectors in $\Omega_m^n$ (where $\Omega_m$ consists of all complex $m$th roots of unity) with given histogram, and then the degree of a function is the minimal $d$ such that the function is a linear combination of monomials which involve at most $d$ different coordinates. Alternatively, we can think of $\Omega_m$ as an abstract set consisting of $m$ ``colors''.

An alternative encoding uses a two-dimensional $0,1$~array $x_{ij}$ (where $1 \leq i \leq n$ and $1 \leq j \leq m$) in which the rows sum to~1 and the columns sum to~$k_i$. (If we do not put any restriction on the columns, we get all of $\ZZ_m^n$, which in this context we call the \emph{multicube} $H(n, m)$.) When $m=n$ and $k_1=\cdots=k_n=1$, the multislice becomes the symmetric group, and the array becomes a permutation matrix. More generally, a multislice is just a permutation module of the symmetric group. We define the degree of a function as the minimal degree of a polynomial representing the function. It is not hard to check that the two definitions are equivalent.

We show an analog of Lemma~\ref{thm:johnson-deg1_intro}.

\begin{lemma} \label{lem:multislice-deg1}
 Let $k_1,\ldots,k_m \geq 1$, and let $D = \{ i : k_i = 1 \}$. A Boolean degree~1 function on $M(k_1,\ldots,k_m)$ either depends on the color of some coordinate, or on which coordinate gets color $c$ for some $c \in D$.
\end{lemma}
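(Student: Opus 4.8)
The plan is to lift $f$ to the symmetric group and invoke Theorem~\ref{thm:efp}. Fix an ordered partition $[n] = B_1 \sqcup \cdots \sqcup B_m$ with $|B_c| = k_c$, and let $\phi\colon S_n \to M(k_1,\ldots,k_m)$ send a permutation $\pi$ to the coloring that gives coordinate $a$ the color $c$ for which $\pi(a) \in B_c$. Then $\phi$ is onto, and its fibers are exactly the left cosets of the Young subgroup $Y = S_{B_1} \times \cdots \times S_{B_m}$; concretely $\phi(\pi)=\phi(\sigma)$ iff $\sigma = \tau\pi$ for some $\tau \in Y$. In the two-dimensional encoding the indicator $x_{ac}$ of ``coordinate $a$ has color $c$'' pulls back along $\phi$ to $\sum_{v \in B_c} M_{av}$, a sum of $k_c$ entries of the permutation matrix of $\pi$; hence $\tilde f := f\circ\phi$ is a Boolean function on $S_n$ of degree at most $1$.

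By Theorem~\ref{thm:efp}, $\tilde f$ is of one of the two standard forms, and the remaining work is to impose $Y$-invariance, $\tilde f(\tau\pi)=\tilde f(\pi)$ for all $\tau\in Y$, and read the answer back through $\phi$. If $\tilde f(\pi) = 1_{\pi(i)\in J}$, invariance gives $1_{\tau(x)\in J} = 1_{x\in J}$ for all $x\in[n]$, $\tau\in Y$, so $J$ is a union of blocks, say $J = \bigcup_{c\in S} B_c$; then $\tilde f(\pi)=1$ exactly when the color of coordinate $i$ lies in $S$, i.e.\ $f$ depends on the color of coordinate $i$. If instead $\tilde f(\pi) = 1_{\pi^{-1}(j)\in I}$, write $j \in B_{c_0}$: when $k_{c_0}\ge 2$ I would take $y \in B_{c_0}\setminus\{j\}$ and $\tau = (j\,y)\in Y$, whence invariance forces $1_{\pi^{-1}(y)\in I} = 1_{\pi^{-1}(j)\in I}$ for every $\pi$, and since $(\pi^{-1}(y),\pi^{-1}(j))$ ranges over all ordered pairs of distinct points, $I$ is trivial and $\tilde f$ is constant (a degenerate case of the first form); when $k_{c_0}=1$, every $\tau\in Y$ fixes $j$, so no condition on $I$ arises, and $\pi^{-1}(j)$ is precisely the unique coordinate colored $c_0$, so $f$ depends on which coordinate gets color $c_0\in D$.

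This is essentially bookkeeping once Theorem~\ref{thm:efp} is available; the only things needing care are the group-theoretic conventions (left versus right cosets, $\pi$ versus $\pi^{-1}$) and verifying that $\tilde f$ really has degree $\le 1$, which is immediate from $x_{ac}\circ\phi = \sum_{v\in B_c}M_{av}$. I therefore do not expect a serious obstacle; the subtlest point is merely confirming that the singleton-block case $k_{c_0}=1$ is the exact boundary between the two families appearing in the statement.
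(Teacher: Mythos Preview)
Your proposal is correct and follows essentially the same approach as the paper: both lift $f$ to $S_n$ via the block-coloring map, observe that the lift has degree at most $1$ because each $x_{ac}$ pulls back to a sum of permutation-matrix entries, apply Theorem~\ref{thm:efp}, and then use invariance under the Young subgroup to pin down the two cases. Your treatment is in fact slightly more explicit than the paper's in Case~1 (you spell out why $J$ must be a union of blocks) and in the group-theoretic bookkeeping; the only cosmetic quibble is that the fibers $Y\pi$ are more commonly called right cosets, but this does not affect the argument.
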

\begin{proof}
 Let $f$ be a $0,1$-valued degree~1 function on $M(k_1,\ldots,k_m)$. We lift $f$ to a Boolean function $F$ on $S_n$ as follows. Let $\chi\colon [n] \to [m]$ map the first $k_1$~values the color~$1$, the following $k_2$~values the color~$2$, and so on. We define $F(\pi) = f(\chi(\pi(1)),\ldots,\chi(\pi(n)))$. 
 We claim that $\deg F = 1$. Indeed, denoting the input to $f$ by $x_{ij}$ (using the two-dimensional array input convention) and the input to $F$ by $X_{ij}$, we see that $x_{ij}$ is a sum of $k_j$ values of the form $X_{iJ}$. Theorem~\ref{thm:efp} thus implies that one of the following cases holds:
\begin{enumerate}
 \item There exists $i \in [n]$ and $J \subseteq [n]$ such that $F(\pi)$ is the indicator of ``$\pi(i) \in J$''.
 \item There exists $j \in [n]$ and $I \subseteq [n]$ such that $F(\pi)$ is the indicator of ``$\pi^{-1}(j) \in I$''.
\end{enumerate}
 Let us say that two permutations $\sigma,\tau$ are equivalent if $\chi(\sigma(i)) = \chi(\tau(i))$ for all~$i$. The definition of $F$ guarantees that $F(\sigma) = F(\tau)$ if $\sigma,\tau$ are equivalent. Therefore the condition indicated by $F$ must be invariant under equivalence. Let us now consider the two cases above.
 
 \textbf{Case 1:} $F(\pi)$ is the indicator of ``$\pi(i) \in J$''. In this case $f$ depends only on $x_i$.
 
 \textbf{Case 2:} $F(\pi)$ is the indicator of ``$\pi^{-1}(j) \in I$''. If $\chi(j') = \chi(j)$ then this condition must be equivalent to $\pi^{-1}(j') \in I$. If $j' \neq j$ and $I \notin \{\emptyset,[n]\}$ then we can find a permutation $\pi$ such that $\pi^{-1}(j) \in I$ and $\pi^{-1}(j') \notin I$, and so we reach a contradiction. This shows that either $\chi(j) \in D$, in which case $f$ depends on which coordinate gets color $\chi(j)$, or $I \in \{\emptyset,[n]\}$, in which case $f$ is constant.
\end{proof}

% As mentioned before, this easily implies Theorem \ref{thm:hamming_class_gen}.

\section{Future work} \label{sec:future}

In order to complete the classification of Boolean degree $1$ functions on Grassmann graphs, it is sufficient to classify Boolean degree $1$ functions on $J_q(n,k)$ for $k$ very small, ideally $k=2$.
Hence, proving Theorem~\ref{thm:k_eq_2_results} for $n \geq 5$ without relying on the classification
of Boolean degree $1$ function on $J_q(4, 2)$ would be ideal. Perhaps results such as the one
given in \cite{Gavrilyuk2014,Metsch2014} could help here.

For polar spaces, it would be very interesting to investigate other
small cases by computer as was done for $J_q(4, 2)$, so that the validity of Conjecture~\ref{conj:polar_main}
can be further validated. In particular, for $k, n-k \geq 2$ and $q \in \{ 2, 3, 4, 5 \}$, this would be enough
to extend the classification result given in Theorem~\ref{thm:polar_small_dim_main} due to Theorem~\ref{thm:polar_inductive_step}.

Similar questions as for the Grassmann graph and polar spaces arise for the sesquilinear forms graphs.

\smallskip

Often Boolean degree $1$ functions correspond to the largest families of intersecting objects,
connecting them to Erd\H{o}s--Ko--Rado (EKR) theorems (see \cite{Godsil2016}). Indeed, all our trivial
examples are built from these intersecting families, which are the is indicator functions $x_i^+$.
In the group case, EKR theorems are known for all $2$-transitive groups, and the largest examples are indeed Boolean degree $1$ functions \cite{Meagher2016},
so classifying all Boolean degree $1$ functions on $2$-transitive groups would be very interesting.

\smallskip

One of the classical results in analysis of Boolean functions on the hypercube, the Friedgut--Kalai--Naor theorem~\cite{FKN}, states that a Boolean function on the hypercube which almost has degree~1 (in the sense that it is close in $L_2$ norm to a degree~1 function, which is not necessarily Boolean) is close to a Boolean degree $1$ function. This has implications to EKR theory, since an almost largest family of intersecting objects is often close to degree~$1$. Similar results have been proven on the Hamming graphs~\cite{ADFS}, Johnson graphs~\cite{filmus5}, and symmetric groups~\cite{eff1,eff2}, but not on any of the other domains considered here. Generalizations to larger degree have also been considered on the hypercube~\cite{Kindler,KindlerSafra}, Johnson graphs~\cite{KellerKlein}, and symmetric groups~\cite{eff3}. 

\smallskip

In a different direction, it would be interesting to extend the classification results to Boolean degree~$d$ functions. For a domain $D$, let $\gamma_d(D)$ be the largest number of coordinates that a Boolean degree~$d$ function on $D$ can depend on. Classical results in analysis of Boolean functions on the hypercube show that $\Omega(2^d) \leq \gamma_d(H(n,2)) \leq d2^{d-1}$. In ongoing work~\cite{FilmusIhringer2}, we have shown that for $k,n-k \geq \exp(d)$ it holds that $\gamma_d(J(n,k)) = \gamma_d(H(n,2))$, and we suspect that a similar result holds for the multislice. More generally, we conjecture that in the domains considered in the paper, Boolean degree~$d$ functions can be formed by combining a bounded number of Boolean degree~$1$ functions.

\paragraph*{Acknowledgements} We thank Alexander L. Gavrilyuk and Maarten De Boeck for their various remarks on earlier drafts of this document.
% Main goal: Prove more base cases
% Proof inductive step for polar spaces and generators, tricky
% proof FKN theorem (maybe we do that)

% \bibliographystyle{plain}
% \bibliography{biblio_plus}

\begin{thebibliography}{10}

\bibitem{ADFS}
Noga Alon, Irit Dinur, Ehud Friedgut, and Benny Sudakov.
\newblock Graph products, {F}ourier analysis and spectral techniques.
\newblock {\em Geometric and functional analysis}, 14(5):913--940, 2004.

\bibitem{Bamberg2007}
John Bamberg, Shane Kelly, Maska Law, and Tim Penttila.
\newblock Tight sets and {$m$}-ovoids of finite polar spaces.
\newblock {\em J. Combin. Theory Ser. A}, 114(7):1293--1314, 2007.

\bibitem{Brouwer2017}
Andries~E. Brouwer, Sebastian~M. Cioab\u{a}, Ferdinand Ihringer, and Matt
  McGinnis.
\newblock The smallest eigenvalues of {H}amming graphs, {J}ohnson graphs and
  other distance-regular graphs with classical parameters.
\newblock {\em arXiv:1709.09011 [math.CO]}, 2017.

\bibitem{Brouwer1989}
Andries~E. Brouwer, Arjeh~M. Cohen, and Arnold Neumaier.
\newblock {\em Distance-regular graphs}, volume~18 of {\em Ergebnisse der
  Mathematik und ihrer Grenzgebiete (3) [Results in Mathematics and Related
  Areas (3)]}.
\newblock Springer-Verlag, Berlin, 1989.

\bibitem{Brouwer2003}
Andries~E. Brouwer, Chris~D. Godsil, Jack~H. Koolen, and William~J. Martin.
\newblock Width and dual width of subsets in polynomial association schemes.
\newblock {\em J. Combin. Theory Ser. A}, 102(2):255--271, 2003.

\bibitem{Bruen1999}
Aiden~A. Bruen and Keldon Drudge.
\newblock The construction of {C}ameron-{L}iebler line classes in {${\rm
  PG}(3,q)$}.
\newblock {\em Finite Fields Appl.}, 5(1):35--45, 1999.

\bibitem{Cameron1982}
Peter~J. Cameron and Robert~A. Liebler.
\newblock Tactical decompositions and orbits of projective groups.
\newblock {\em Linear Algebra Appl.}, 46:91--102, 1982.

\bibitem{Cossidente2017}
Antonio Cossidente and Francesco Pavese.
\newblock New {C}ameron--{L}iebler line classes with parameter
  {$\frac{q^2+1}{2}$}.
\newblock {\em arXiv:1707.01878 [math.CO]}, 2017.

\bibitem{DeBeule2016}
Jan De~Beule, Jeroen Demeyer, Klaus Metsch, and Morgan Rodgers.
\newblock A new family of tight sets in {$Q^+(5,q)$}.
\newblock {\em Des. Codes Cryptogr.}, 78(3):655--678, 2016.

\bibitem{DeBoeck2017}
Maarten {De Boeck}, Morgan Rodgers, Leo Storme, and Andrea Svob.
\newblock {C}ameron--{L}iebler sets of generators in finite classical polar
  spaces.
\newblock {\em arXiv:1712.06176 [math.CO]}, 2017.

\bibitem{DeBoeck2016}
Maarten De~Boeck, Leo Storme, and Andrea Svob.
\newblock The {C}ameron--{L}iebler problem for sets.
\newblock {\em Discrete Math.}, 339(2):470--474, 2016.

\bibitem{DeBruyn2010}
Bart De~Bruyn and Hiroshi Suzuki.
\newblock Intriguing sets of vertices of regular graphs.
\newblock {\em Graphs Combin.}, 26(5):629--646, 2010.

\bibitem{Delsarte1973}
Philippe Delsarte.
\newblock An algebraic approach to the association schemes of coding theory.
\newblock {\em Philips Res. Rep. Suppl.}, (10):vi+97, 1973.

\bibitem{Delsarte1978}
Philippe Delsarte.
\newblock Bilinear forms over a finite field with applications to coding
  theory.
\newblock {\em J. Combin. Th. (A)}, (25):226--241, 1978.

\bibitem{TR16-198}
Irit Dinur, Subhash Khot, Guy Kindler, Dor Minzer, and Muli Safra.
\newblock Towards a proof of the 2-to-1 games conjecture?
\newblock Technical Report TR16-198, ECCC, 2016.

\bibitem{TR17-094}
Irit Dinur, Subhash Khot, Guy Kindler, Dor Minzer, and Muli Safra.
\newblock On non-optimally expanding sets in {G}rassmann graphs.
\newblock Technical Report TR17-094, ECCC, 2017.

\bibitem{Drudge1998}
Keldon~W. Drudge.
\newblock {\em Extremal sets in projective and polar spaces}.
\newblock PhD thesis, The University of Western Ontario, 1998.

\bibitem{Eisfeld1998}
J{\"o}rg Eisfeld.
\newblock Subsets of association schemes corresponding to eigenvectors of the
  {B}ose-{M}esner algebra.
\newblock {\em Bull. Belg. Math. Soc. Simon Stevin}, 5(2-3):265--274, 1998.
\newblock Finite geometry and combinatorics (Deinze, 1997).

\bibitem{Eisfeld1999}
J{\"o}rg Eisfeld.
\newblock The eigenspaces of the {B}ose-{M}esner algebras of the association
  schemes corresponding to projective spaces and polar spaces.
\newblock {\em Des. Codes Cryptogr.}, 17(1-3):129--150, 1999.

\bibitem{eff1}
David Ellis, Yuval Filmus, and Ehud Friedgut.
\newblock A quasi-stability result for dictatorships in {$S_n$}.
\newblock {\em Combinatorica}, 35(5):573--618, 2015.

\bibitem{eff2}
David Ellis, Yuval Filmus, and Ehud Friedgut.
\newblock A stability result for balanced dictatorships in {$S_n$}.
\newblock {\em Random Structures and Algorithms}, 46(3):494--530, 2015.

\bibitem{eff3}
David Ellis, Yuval Filmus, and Ehud Friedgut.
\newblock Low-degree {B}oolean functions on {$S_n$}, with an application to
  isoperimetry.
\newblock {\em Forum of Mathematics, Sigma}, 5, 2017.

\bibitem{EFP}
David Ellis, Ehud Friedgut, and Haran Pilpel.
\newblock Intersecting families of permutations.
\newblock {\em J. Amer. Math. Soc.}, 24:649--682, 2011.

\bibitem{Feng2015}
Tao Feng, Koji Momihara, and Qing Xiang.
\newblock Cameron-{L}iebler line classes with parameter {$x=\frac{q^2-1}{2}$}.
\newblock {\em J. Combin. Theory Ser. A}, 133:307--338, 2015.

\bibitem{filmus5}
Yuval Filmus.
\newblock Friedgut--{K}alai--{N}aor theorem for slices of the {B}oolean cube.
\newblock {\em Chicago Journal of Theoretical Computer Science}, pages
  14:1--14:17, 2016.

\bibitem{filmus4}
Yuval Filmus.
\newblock Orthogonal basis for functions over a slice of the {B}oolean
  hypercube.
\newblock {\em Electronic J. Comb.}, 23(1):P1.23, 2016.

\bibitem{efp-comment}
Yuval Filmus.
\newblock {A comment on Intersecting Families of Permutations}.
\newblock {\em CoRR}, abs/1706.10146, 2017.

\bibitem{FilmusIhringer2}
Yuval Filmus and Ferdinand Ihringer.
\newblock Boolean constant degree functions on the slice are juntas.
\newblock {\em CoRR}, abs/1801.06338, 2018.

\bibitem{fkmw}
Yuval Filmus, Guy Kindler, Elchanan Mossel, and Karl Wimmer.
\newblock Invariance principle on the slice.
\newblock In {\em 31st Computational Complexity Conference}, 2016.

\bibitem{fm}
Yuval Filmus and Elchanan Mossel.
\newblock Harmonicity and invariance on slices of the {B}oolean cube.
\newblock {\em Probability Theory and Related Fields}, to appear.

\bibitem{FKN}
Ehud Friedgut, Gil Kalai, and Assaf Naor.
\newblock Boolean functions whose {F}ourier transform is concentrated on the
  first two levels.
\newblock {\em Advances in Applied Mathematics}, 29(3):427--437, 2002.

\bibitem{Gavrilyuk2018a}
Alexander~L. Gavrilyuk and I.~Matkin.
\newblock Cameron-liebler line classes in ${PG}(3,5)$.
\newblock {\em arXiv:1803.10442 [math.CO]}, 2018.

\bibitem{Gavrilyuk2018}
Alexander~L. Gavrilyuk, Ilia Matkin, and Tim Penttila.
\newblock Derivation of {C}ameron-{L}iebler line classes.
\newblock {\em Des. Codes Cryptogr.}, 86:231--236, 2018.

\bibitem{Gavrilyuk2014}
Alexander~L. Gavrilyuk and Klaus Metsch.
\newblock A modular equality for {C}ameron-{L}iebler line classes.
\newblock {\em J. Combin. Theory Ser. A}, 127:224--242, 2014.

\bibitem{Gavrilyuk2014a}
Alexander~L. Gavrilyuk and Ivan~Yu. Mogilnykh.
\newblock Cameron-{L}iebler line classes in {$PG(n,4)$}.
\newblock {\em Des. Codes Cryptogr.}, 73(3):969--982, 2014.

\bibitem{Godsil2016}
Chris Godsil and Karen Meagher.
\newblock {\em {E}rd{\H o}s-{K}o-{R}ado Theorems: Algebraic Approaches}.
\newblock Number 149 in Cambridge Studies in Advanced Mathematics. Cambridge
  Univ. Press, December 2016.

\bibitem{Govaerts2005}
Patrick Govaerts and Tim Penttila.
\newblock Cameron-{L}iebler line classes in {${\rm PG}(3,4)$}.
\newblock {\em Bull. Belg. Math. Soc. Simon Stevin}, 12(5):793--804, 2005.

\bibitem{Hua1949}
Loo-Keng Hua.
\newblock Geometry of symmetric matrices over any field with characteristic
  other than two.
\newblock {\em Ann. of Math. (2)}, 50:8--31, 1949.

\bibitem{KellerKlein}
Nathan Keller and Ohad Klein.
\newblock Kindler--{S}afra theorem for the slice.
\newblock Manuscript, 2017.

\bibitem{Khot:2017:ISG:3055399.3055432}
Subhash Khot, Dor Minzer, and Muli Safra.
\newblock On independent sets, 2-to-2 games, and grassmann graphs.
\newblock In {\em Proceedings of the 49th Annual ACM SIGACT Symposium on Theory
  of Computing}, STOC 2017, pages 576--589, New York, NY, USA, 2017. ACM.

\bibitem{TR18-006}
Subhash Khot, Dor Minzer, and Muli Safra.
\newblock Pseudorandom sets in {G}rassmann graph have near-perfect expansion.
\newblock Technical Report TR18-006, ECCC, 2018.

\bibitem{Kindler}
Guy Kindler.
\newblock {\em Property testing, {PCP} and Juntas}.
\newblock PhD thesis, Tel-Aviv University, 2002.

\bibitem{KindlerSafra}
Guy Kindler and Shmuel Safra.
\newblock Noise-resistant {B}oolean functions are juntas.
\newblock {\em Manuscript}.

\bibitem{Martin1994}
William~J. Martin.
\newblock Completely regular designs of strength one.
\newblock {\em J. Algebraic Combin.}, 3(2):177--185, 1994.

\bibitem{Matkin2018}
I.~Matkin.
\newblock Cameron-liebler line classes in ${PG}(n, 5)$.
\newblock {\em Trudy Inst. Mat. i Mekh. UrO RAN}, 24(2):158--172, 2018.

\bibitem{Meagher2016}
Karen Meagher, Pablo Spiga, and Pham~Huu Tiep.
\newblock An {E}rd{\H o}s--{K}o--{R}ado theorem for finite 2-transitive groups.
\newblock {\em European J. Combin.}, 550:100--118, 2016.

\bibitem{Metsch2014}
Klaus Metsch.
\newblock An improved bound on the existence of {C}ameron-{L}iebler line
  classes.
\newblock {\em J. Combin. Theory Ser. A}, 121:89--93, 2014.

\bibitem{Metsch2017}
Klaus Metsch.
\newblock A gap result for {C}ameron-{L}iebler {$k$}-classes.
\newblock {\em Discrete Math.}, 340(6):1311--1318, 2017.

\bibitem{Meyerowitz1992}
Aaron~D. Meyerowitz.
\newblock Cycle-balanced partitions in distance-regular graphs.
\newblock {\em J. Combin. Inform. System Sci.}, 17(1-2):39--42, 1992.

\bibitem{RyanODonnell}
Ryan O'Donnell.
\newblock {\em Analysis of {B}oolean functions}.
\newblock Cambridge University Press, 2014.

\bibitem{doi:10.1137/100787325}
Ryan O'Donnell and Karl Wimmer.
\newblock {KKL}, {K}ruskal--{K}atona, and monotone nets.
\newblock {\em SIAM Journal on Computing}, 42(6):2375--2399, 2013.

\bibitem{Pentilla1985}
Tim Pentilla.
\newblock {\em Collineations and Configurations in Projective Spaces}.
\newblock PhD thesis, Oxford University, 1985.

\bibitem{Penttila1991}
Tim Penttila.
\newblock Cameron-{L}iebler line classes in {${\rm PG}(3,q)$}.
\newblock {\em Geom. Dedicata}, 37(3):245--252, 1991.

\bibitem{Rodgers2018}
Morgan Rodgers, Leo Storme, and Andries Vansweevelt.
\newblock The {C}ameron-{L}iebler $k$-classes in $pg(2k+1, q)$.
\newblock {\em Combinatorica}, in press.

\bibitem{Schmidt2018}
Kai-Uwe Schmidt.
\newblock Hermitian rank distance codes.
\newblock {\em Des. Codes Cryptogr.}, 2018.

\bibitem{Stanton1980a}
Dennis Stanton.
\newblock Some {$q$}-{K}rawtchouk polynomials on {C}hevalley groups.
\newblock {\em Amer. J. Math.}, 102(4):625--662, 1980.

\bibitem{Terwilliger}
Paul Terwilliger.
\newblock Quantum matroids.
\newblock In E.~Bannai and A.~Munemasa, editors, {\em Progress in Algebraic
  Combinatorics}, volume~24 of {\em Advanced Studies in Pure Mathematics},
  pages 323--441, Tokyo, 1996. Mathematical Society of Japan.

\bibitem{Tits1974}
Jacques Tits.
\newblock {\em Buildings of spherical type and finite {BN}-pairs}.
\newblock Lecture Notes in Mathematics, Vol. 386. Springer-Verlag, Berlin,
  1974.

\bibitem{Vanhove2011}
Fr{\'e}d{\'e}ric Vanhove.
\newblock {\em Incidence geometry from an algebraic graph theory point of
  view}.
\newblock PhD thesis, Ghent University, 2011.

\end{thebibliography}

\end{document}